\documentclass[12pt,reqno]{amsart}

\usepackage[OT1]{fontenc}
\RequirePackage{times}

\usepackage{amssymb,amsmath,amsfonts,eurosym}
\usepackage[mathscr]{euscript}
\usepackage{enumitem,xcolor}
\allowdisplaybreaks
\oddsidemargin=-.0cm
\evensidemargin=-.0cm
\textwidth=16cm
\textheight=22cm
\topmargin=0cm

\def\D {{\mathfrak D}}

\def\H {{\mathcal H}}
\def\V {{\mathcal V}}
\def\Z {{\mathcal Z}}

\def\M {{\mathcal M}}
\def\N {{\mathcal N}}

\def\R {\mathbb{R}}
\def\Re {\mathfrak{Re\,}}

\def\eps{\varepsilon}
\def\e{{\rm e}}
\def\d{{\rm d}}
\def\ddt{\frac{\d}{\d t}}
\def\i{{\rm i}}
\def \l {\langle}
\def \r {\rangle}
\def \and {{\qquad\text{and}\qquad}}

\newtheorem{proposition}{Proposition}[section]
\newtheorem{theorem}[proposition]{Theorem}
\newtheorem{corollary}[proposition]{Corollary}
\newtheorem{lemma}[proposition]{Lemma}
\theoremstyle{definition}

\newtheorem{remark}[proposition]{Remark}
\numberwithin{equation}{section}
\def \au {\rm}
\def \ti {\it}
\def \jou {\rm}
\def \bk {\it}
\def \no#1#2#3 {{\bf #1} (#3), #2.}
\def \eds#1#2#3 {#1, #2, #3.}

\title[Bresse and Timoshenko]
{On the stability of Bresse and Timoshenko systems\\ with hyperbolic heat conduction}

\author[F. Dell'Oro]
{Filippo Dell'Oro}

\address{Politecnico di Milano - Dipartimento di Matematica
\newline\indent
Via Bonardi 9, 20133 Milano, Italy}
\email{filippo.delloro@polimi.it}

\subjclass[2010]{35B40, 45K05, 47D03, 74D05, 74F05}
\keywords{Bresse system, Timoshenko system, Gurtin-Pipkin law, Maxwell-Cattaneo law, exponential stability,
polynomial stability}

\begin{document}

\begin{abstract}
We investigate the stability of three thermoelastic beam systems with hyperbolic heat conduction.
First, we study the Bresse-Gurtin-Pipkin system,
providing a necessary and sufficient condition for the exponential stability
and the optimal polynomial decay rate
when the condition is violated. Second, we obtain
analogous results for the Bresse-Maxwell-Cattaneo system, completing
an analysis recently initiated in the literature.
Finally, we consider the Timoshenko-Gurtin-Pipkin system and
we find the optimal polynomial decay rate when the known
exponential stability condition does not hold.
As a byproduct, we fully recover the stability characterization of the
Timoshenko-Maxwell-Cattaneo system.
The classical "equal wave speeds" conditions
are also recovered through singular limit procedures.
Our conditions are
compatible with some physical constraints on the coefficients as the positivity of the Poisson's ratio of the material.
The analysis faces several challenges connected with the
thermal damping, whose
resolution rests on recently
developed mathematical tools such as quantitative Riemann-Lebesgue lemmas.
\end{abstract}

\maketitle

\section{Introduction}

\subsection{Preamble}\label{PRE}
In 1993-1994, J.E.~Lagnese, G. Leugering and E.J.P.G. Schmidt derived a general
nonlinear PDE model for
the dynamics of thin thermoelastic beams~\cite{LLS,LLS2}. A particular linearized
case of such a model is the Bresse-Fourier (BF) system
\begin{equation}
\label{bresse-fou}
\begin{cases}
\rho_1 \varphi_{tt} -k(\varphi_x +\psi +l w)_x - l k_0(w_x - l\varphi) + l\gamma \xi= 0,\\\noalign{\vskip0.3mm}
\rho_2 \psi_{tt} -b\psi_{xx} +k(\varphi_x +\psi +lw) +\gamma\vartheta_x= 0,\\\noalign{\vskip0.3mm}
\rho_1 w_{tt}  -k_0(w_x - l\varphi)_x + lk(\varphi_x +\psi +lw) +\gamma \xi_x= 0,\\\noalign{\vskip0.3mm}
\rho_3\vartheta_t -\varpi \vartheta_{xx}+ \gamma \psi_{xt}=0,\\\noalign{\vskip0.3mm}
\rho_3\xi_t - \varpi\xi_{xx} + \gamma (w_{xt} - l \varphi_t)=0,\
\end{cases}
\end{equation}
which describes the vibrations of a curved thin thermoelastic beam of length $\ell>0$, taking into account
both rotatory inertia and shear deformation effects.
The unknowns $\varphi,\psi,w$ represent
the vertical displacement, the rotation angle of the cross-section and
the horizontal displacement, while $\vartheta,\xi$
represent the temperature (deviations from a fixed reference temperature) along the vertical and horizontal directions.
With standard notation,
the subscripts $t$ and $x$ indicate the partial derivatives with respect
to the time variable $t>0$ and the space variable $x\in (0,\ell)$.
The strictly positive constants
$\rho_1,\rho_2, \rho_3, k, k_0, b, \varpi, l,\gamma$ account for the physical properties of the beam (see
\cite{LLS,LLS2,LIU} for details).
In particular, they are subjected to the constraints
\begin{equation}
\label{phydef}
k_0=\frac{b\rho_1}{\rho_2} \,\,\,\and\,\,\,
b>\frac{k \rho_2}{\rho_1}.
\end{equation}
The first equality tells that the rotation angle and the horizontal
displacement motions have the same wave speeds. The second relation tells that the wave speed of
the rotation angle equation is greater than the one of the vertical displacement equation, and
is a consequence of the positivity of
the Poisson's ratio of the involved material (which is always the
case for ``ordinary" media). Finally, the constant $l$
accounts for the initial curvature of the beam.
However, as customary in the mathematical literature on the subject, in the sequel
we will unhook these constants from their physical meaning,
allowing them to assume any positive real value.
Still, we will keep in mind \eqref{phydef} as it permits
a physical interpretation of the mathematical results.

\begin{remark}
System \eqref{bresse-fou} takes the first part of the name (Bresse) from the fact
that its isothermal counterpart (i.e.\ when the temperatures are neglected)
was derived in 1859 by the French engineer
J.A.C. Bresse in his pioneering work \cite{BRR}. The second part of the name (Fourier) is because
the temperature evolution is modeled using the Fourier law (see~\cite{LLS,LLS2}
for details).
\end{remark}

It is interesting to observe that in the limit case when $l=0$, namely when the
beam is straight, the horizontal
displacement uncouples from the vertical and the rotation angle motions.
In this situation \eqref{bresse-fou} splits into the Timoshenko-Fourier (TF) system
\begin{equation}
\label{TF}
\begin{cases}
\rho_1\varphi_{tt} -k (\varphi_{x}+\psi)_x=0,\\\noalign{\vskip0.3mm}
\rho_2 \psi_{tt}- b \psi_{xx} + k(\varphi_x+\psi)+\gamma \vartheta_x=0,\\\noalign{\vskip0.3mm}
\rho_3\vartheta_t -\varpi \vartheta_{xx}+ \gamma \psi_{xt}=0,
\end{cases}
\end{equation}
and the system of second-order linear thermoelasticity in one dimension
\begin{equation}
\label{IITHERMO}
\begin{cases}
\rho_1 w_{tt}  -k_0w_{xx} +\gamma \xi_x= 0,\\\noalign{\vskip0.3mm}
\rho_3\xi_t - \varpi\xi_{xx} + \gamma w_{xt}=0.
\end{cases}
\end{equation}
Therefore, one may assert that \eqref{TF} and \eqref{IITHERMO}
are both special cases of~\eqref{bresse-fou}. Of course, since the
constant $l$ is assumed to be strictly positive, one should more properly say that
\eqref{TF} and \eqref{IITHERMO} can be obtained from \eqref{bresse-fou} in the (singular) limit $l\to0$.

\begin{remark}
System \eqref{TF} takes the first part of the name from the fact
that its isothermal counterpart is
the well-known Timoshenko system~\cite{TIMOS}
or, more properly, the Timoshenko-Ehrenfest system (see the historical account \cite{ELI}).
\end{remark}

\subsection{Stability of the BF and TF systems}
\label{stabbf}

The stability properties of the BF system \eqref{bresse-fou}
have been analyzed for the first time by Z. Liu and B. Rao in the influential paper \cite{LIU}. There, the
authors introduced two stability numbers
$$
\chi_0 = b - \frac{k\rho_2}{\rho_1} \,\and\, \chi_1= k_0-k,
$$
and showed that the solution semigroup associated to \eqref{bresse-fou} with appropriate boundary conditions
is exponentially stable if and only if
\begin{equation}
\label{nscBF}
\chi_0 \hspace{0.4mm} \chi_1=0.
\end{equation}
This means that exponential stability occurs if and only if the first and the second equation of~\eqref{bresse-fou} have
the same wave speed or the same do the first and the third equation. When \eqref{nscBF}
is violated, it was proved in
\cite{LIU} that the semigroup is polynomially stable. Note that,
since \eqref{phydef} is incompatible with \eqref{nscBF}, exponential stability never occurs
in physical situations.

Concerning the TF system \eqref{TF}, it was proved by J.E. Mu{\~n}oz Rivera
and R. Racke \cite{RRTIM} that the associated solution semigroup is exponentially stable if and only if
$\chi_0=0$. Again, the latter condition
is physically unrealistic \cite{OLSON} and, when violated, leads to a polynomially stable
dynamics \cite{CARDO}.

\begin{remark}
It is a classical result that the semigroup associated
to the system of second-order thermoelasticity \eqref{IITHERMO} is exponentially stable
independently of the value of the
structural constants in the model (see e.g.\ \cite[Chapter 2]{LZ} and \cite[Chapter 2]{RACK}).
\end{remark}

The meaning of \eqref{nscBF} has been highlighted in \cite{LIU},
where it is explained that
the variables $\psi$ and $w$ are ``effectively damped"
due to the coupling with the temperatures but the variable $\varphi$ is only
``indirectly damped" via the second and third equations plus a weaker coupling with the temperature.
Condition \eqref{nscBF} tells that the
effectiveness of the damping acting on $\varphi$ depends on the equality between the wave speeds
of either the second (effectively damped) equation and the first equation, or
the third (effectively damped) equation and the first equation. From the technical side,
condition~\eqref{nscBF} provides a cancellation of some higher-order terms that cannot
be controlled with the first-order energy.
A similar phenomenon appears in
the TF system.

\subsection{Hyperbolic heat conduction}\label{hypsec}
In the physical derivation of system \eqref{bresse-fou} one employs
the classical Fourier thermal law
\begin{equation}
\label{foulaw}
p=-\varpi \vartheta_x,
\end{equation}
where $p=p(x,t)$ is the so-called heat-flux variable (see \cite{LLS,LLS2} for details).
But, as is well-known, the use of \eqref{foulaw} leads to an infinite speed
propagation of thermal signals, due to the parabolic character of the heat equation.
On the contrary, with the advent
of modern microscale technologies, there is an increasing evidence that the thermal motion
is a wave-type phenomenon, where
the temperature may travel with a finite speed of propagation
(see e.g.\ \cite{St} and references therein). As a consequence,
a number of ``hyperbolic heat conduction theories"
have been proposed along the years.
One of them is due to M.E. Gurtin and A.C. Pipkin \cite{GP} and consists in replacing \eqref{foulaw} with
\begin{equation}
\label{GPLAW}
p(t) = - \varpi\int_0^\infty g(s)\vartheta_{x}(t-s)\d s,
\end{equation}
where $g$ is a suitable convolution kernel. Equation \eqref{GPLAW} is a memory
relaxation of~\eqref{foulaw}, and the latter can be recovered in the (singular) limit
when $g $ converges to the Dirac mass at zero. An interesting special case of \eqref{GPLAW}
is obtained by choosing
$$g(s)= \frac{1}{\varpi \varsigma}\e^{-\frac{s}{\varpi\varsigma}}$$
where $\varsigma$ is a positive parameter.
In this way, one gets from \eqref{GPLAW} the so-called
Maxwell-Cattaneo law \cite{catta}, that is
\begin{equation}
\label{cattalaw}
\varsigma \varpi p_t + p= - \varpi \vartheta_x.
\end{equation}
Note that \eqref{cattalaw}
reduces to \eqref{foulaw} in the limit situation when $\varsigma=0$.

On the basis of these motivations, there has been a lot of activity directed towards the study
of PDE models where the parabolic Fourier law is replaced by a hyperbolic one. To the best of our knowledge,
the first results concerning the stability properties of Bresse and Timoshenko systems
with hyperbolic heat conduction have been obtained in the influential
papers \cite{SR,SJR}, dealing with the thermoelastic Timoshenko system with temperature obeying
the Maxwell-Cattaneo law. The analysis has been extended to the Timoshenko
system with Gurtin-Pipkin law in \cite{TIM}.
More recently, the Bresse system with Maxwell-Cattaneo law
has been studied in~\cite{SARE}. Further papers
treating ``reduced Bresse models" where one temperature is neglected
have appeared in the literature (see Section~\ref{coco} for details),
in addition to other articles where different damping mechanisms are analyzed.
The number of contributions on the subject is rather big, and
a comprehensive overview looks prohibitive. Disregarding papers
dealing exclusively with the Timoshenko system, we
may cite \cite{ALA,CS,FADE,FATM,FATRIV,GKA,GK,KH,MAMO,MEHA,RINA,SASO,SA2,SMRF,WE},
but the list is not exhaustive. Still, to the best of our knowledge, the
stability properties of the Bresse system with Gurtin-Pipkin law have not been investigated so far.
Moreover, the analyses in \cite{SARE,TIM} only deal with the exponential stability, and
no decay rates have been established when exponential stability does not occur.
The aim of the present paper is to fill these gaps.

\subsection{Main results}\label{infmain} We now describe briefly and informally our main results, postponing
the rigorous statements to the forthcoming Sections \ref{rigsecBGP}-\ref{rigsetTIMGP}.

\subsection*{I}
First, we study the Bresse-Gurtin-Pipkin (BGP) system
\begin{equation}
\label{bresse0}
\begin{cases}
\rho_1 \varphi_{tt} -k(\varphi_x +\psi +l w)_x - l k_0(w_x - l\varphi) + l\gamma \xi= 0,\\\noalign{\vskip3mm}
\rho_2 \psi_{tt} -b\psi_{xx} +k(\varphi_x +\psi +lw) +\gamma\vartheta_x= 0,\\\noalign{\vskip3mm}
\rho_1 w_{tt}  -k_0(w_x - l\varphi)_x + lk(\varphi_x +\psi +lw) +\gamma \xi_x= 0,\\\noalign{\vskip2.2mm}
\displaystyle
\rho_3 \vartheta_t - \varpi\int_0^\infty g(s)\vartheta_{xx}(t-s)\d s + \gamma\psi_{xt}=0,\\\noalign{\vskip1.1mm}
\displaystyle
\rho_3 \xi_t - \varpi\int_0^\infty h(s)\xi_{xx}(t-s)\d s + \gamma(w_{xt} - l \varphi_t)=0.\\
\end{cases}
\end{equation}
The model is complemented with the Dirichlet boundary conditions for the variables $\varphi,\vartheta,\xi$
\begin{equation}
\label{BC1}
\varphi(0,t) = \varphi(\ell,t) = \vartheta(0,t) = \vartheta(\ell,t) = \xi(0,t) = \xi(\ell,t) = 0,
\end{equation}
the Neumann boundary conditions for the variables $\psi,w$
\begin{equation}
\label{BC2}
\psi_x(0,t) = \psi_x(\ell,t) = w_x(0,t) = w_x(\ell,t)=0,
\end{equation}
and the appropriate initial data. In particular,
$\vartheta$ and $\xi$ are supposed to be known for negative times, where they need not solve the equations.
The convolution kernels $g$ and $h$ are nonnegative bounded convex summable functions on $[0,\infty)$, both
of unitary total mass and subjected to
some additional properties that will be specified in Subsection \ref{assmemker}.
Exploiting the so-called history framework devised
by C.M. Dafermos \cite{DAF}, system \eqref{bresse0} can be shown to generate
a solution semigroup $S(t)$ on an appropriate phase space.
Introducing the two stability numbers
\begin{align*}
&\chi_g = \Big(\frac{\rho_3}{\varpi g(0)} - \frac{\rho_1}{k} \Big)\Big(b - \frac{k\rho_2}{\rho_1} \Big)
+\frac{\gamma^2}{\varpi g(0)},\\
\noalign{\vskip0.7mm}
&\chi_h = \Big(\frac{\rho_3}{\varpi h(0)} - \frac{\rho_1}{k} \Big)\big(k_0 - k \big)+\frac{\gamma^2}{\varpi h(0)},
\end{align*}
we prove that the semigroup $S(t)$ is exponentially stable if and only if
\begin{equation}
\label{nsBGP}
\chi_g\hspace{0.4mm} \chi_h=0.
\end{equation}
Moreover, when $\chi_g\hspace{0.4mm} \chi_h\neq0$, we show that $S(t)$ is (semiuniformly) polynomially stable
with optimal decay rate~$\sqrt{t}$.

\subsection*{II}
Second, we analyze the Bresse-Maxwell-Cattaneo (BMC) system
\begin{equation}
\label{bressecatta}
\begin{cases}
\rho_1 \varphi_{tt} -k(\varphi_x +\psi +l w)_x - l k_0(w_x - l\varphi) + l\gamma \xi= 0,\\\noalign{\vskip0.5mm}
\rho_2 \psi_{tt} -b\psi_{xx} +k(\varphi_x +\psi +lw) +\gamma\vartheta_x= 0,\\\noalign{\vskip0.5mm}
\rho_1 w_{tt}  -k_0(w_x - l\varphi)_x + lk(\varphi_x +\psi +lw) +\gamma \xi_x= 0,\\\noalign{\vskip0.5mm}
\rho_3 \vartheta_t  + p_x+ \gamma\psi_{xt}=0,\\\noalign{\vskip0.5mm}
\varsigma \varpi p_t + p + \varpi \vartheta_x=0,\\\noalign{\vskip0.5mm}
\rho_3 \xi_t + q_x + \gamma(w_{xt} - l \varphi_t)=0,\\\noalign{\vskip0.5mm}
\tau \varpi q_t + q + \varpi \xi_x=0,
\end{cases}
\end{equation}
complemented with the boundary conditions \eqref{BC1}-\eqref{BC2} and the appropriate initial data.
The unknowns $p$ and $q$ represent the heat-flux variables, and
the constants $\varsigma$ and $\tau$ are strictly positive.
In \cite{SARE}, the BMC system \eqref{bressecatta}
is shown to generate a solution semigroup $T(t)$ on the natural phase space. In the same
article, the authors introduced the stability numbers
\begin{align*}
\chi_\varsigma = \Big(\varsigma\rho_3 - \frac{\rho_1}{k} \Big)\Big(b - \frac{k\rho_2}{\rho_1} \Big)
+\gamma^2\varsigma \, \and \, \chi_\tau = \Big(\tau \rho_3 - \frac{\rho_1}{k} \Big)\big(k_0 - k \big)+\gamma^2\tau,
\end{align*}
and proved that the semigroup $T(t)$ is exponentially stable when
\begin{equation}
\label{nsBMC}
\chi_\varsigma\hspace{0.4mm} \chi_\tau=0.
\end{equation}
Under additional restrictions on the coefficients, they also showed that
\eqref{nsBMC} is necessary for exponential stability.
Here, we complete the analysis of \cite{SARE} proving that \eqref{nsBMC} is indeed necessary and sufficient
for exponential stability. If $\chi_\varsigma\hspace{0.4mm} \chi_\tau\neq0$, we also demonstrate
that $T(t)$ is (semiuniformly) polynomially stable with optimal decay rate $\sqrt{t}$.

\subsection*{III}
Finally, we consider the Timoshenko-Gurtin-Pipkin (TGP) system
\begin{equation}
\label{TIMGP}
\begin{cases}
\rho_1 \varphi_{tt} -k(\varphi_x +\psi)_x = 0,\\
\noalign{\vskip2mm}
\rho_2 \psi_{tt} -b\psi_{xx} +k(\varphi_x +\psi) +\gamma\vartheta_x= 0,\\
\noalign{\vskip1mm}
\displaystyle
\rho_3 \vartheta_t  - \varpi\int_0^\infty g(s)\vartheta_{xx}(t-s)\d s + \gamma\psi_{xt}=0,
\end{cases}
\end{equation}
with the Dirichlet-Neumann-Dirichlet boundary conditions
$$
\varphi(0,t) = \varphi(\ell,t) = \psi_x(0,t) = \psi_x(\ell,t)=\vartheta(0,t) = \vartheta(\ell,t) =0
$$
and the appropriate initial data. Such a model can be obtained from \eqref{bresse0} in the limit
case $l=0$ (cf.\ Subsection \ref{PRE}). The exponential stability of the solution semigroup $U(t)$
associated to the TGP system \eqref{TIMGP} in the Dafermos history framework
has been analyzed in \cite{TIM}, where it is shown
that $U(t)$ is exponentially stable if and only if $\chi_g=0$. Here, we prove that when $\chi_g\neq0$
the semigroup $U(t)$ is (semiuniformly) polynomially stable
with optimal decay rate $\sqrt{t}$.

\begin{remark}
As anticipated in the Abstract, it is interesting to observe that \eqref{nsBGP}
is compatible with \eqref{phydef}. Such a feature has been already pointed out in
\cite{SARE} regarding \eqref{nsBMC}, which is compatible with \eqref{phydef} as well.
As already noticed, this does not happen when the Fourier law is employed, since condition \eqref{nscBF}
is not compatible with \eqref{phydef}.
\end{remark}

\begin{remark}
We shall not discuss the systems of second-order thermoelasticity
with Maxwell-Cattaneo or Gurtin-Pipkin laws, since
they have been already studied in the literature and their stability properties
are well-understood. In particular, exponential stability is the general rule here,
independently of the values of the structural constants (see e.g.\ \cite{PV,RAM2}).
\end{remark}

\subsection*{Plan of the paper} In the next Section \ref{coco}, we make some comments on
the results described so far, and we compare them
with some previous achievements on related models. We also mention few possible extensions
of our analysis. The short Section \ref{not} is devoted to the notation. In the subsequent
Sections~\ref{rigsecBGP}-\ref{rigsetTIMGP} we state
rigorously our main results, whose proofs are carried out
in the remaining Sections~\ref{sceres}-\ref{sezfinale}.


\section{Comments, Comparisons and Extensions}
\label{coco}

\subsection*{I}
The BMC system can be seen as a ``particular case" of the BGP one.
Indeed, taking in~\eqref{bresse0} the exponential kernels
\begin{equation}
\label{expchoice}
g_\varsigma(s) = \frac{1}{\varpi \varsigma}\e^{-\frac{s}{\varpi\varsigma}}
\and h_\tau(s) = \frac{1}{\varpi \tau}\e^{-\frac{s}{\varpi\tau}},
\end{equation}
it is readily seen that the heat-flux variables
\begin{align*}
&p(t) = -\varpi \int_{0}^\infty g_\varsigma(s)\vartheta_x(t-s) \d s,\\\noalign{\vskip0.7mm}
&q(t) = -\varpi \int_{0}^\infty h_\tau(s)\xi_x(t-s) \d s,
\end{align*}
satisfy the 5\textsuperscript{th} and the 7\textsuperscript{th} equation of \eqref{bressecatta}. Moreover,
since $\chi_\varsigma=\chi_{g_\varsigma}$ and $\chi_\tau=\chi_{h_\tau}$, the exponential stability
condition~\eqref{nsBMC} is formally recovered from \eqref{nsBGP}.
Actually, using the techniques of \cite[Section~8]{TIM}, one can prove
rigorously that the semigroup $S(t)$ corresponding to the choice \eqref{expchoice}
is exponentially stable if and only if the same does the semigroup $T(t)$.
Adapting the same techniques, one can also prove that if
$S(t)$ with the choice \eqref{expchoice} is polynomially stable with decay rate $\sqrt{t}$ then
the same does $T(t)$. In addition, the calculations
in the proof of the optimality of the decay rate of $S(t)$ can be easily adapted
to show the optimality of the decay rate of $T(t)$. We refer to
Section \ref{profcatta} of the present paper for details.

The same philosophy can be pursued in the study of the stability properties of
the Timoshenko-Maxwell-Cattaneo (TMC) system
$$
\begin{cases}
\rho_1 \varphi_{tt} -k(\varphi_x +\psi)_x = 0,\\
\rho_2 \psi_{tt} -b\psi_{xx} +k(\varphi_x +\psi) +\gamma\vartheta_x= 0,\\
\rho_3 \vartheta_t + p_x +\gamma\psi_{xt}=0,\\
\varsigma \varpi p_t + p + \varpi\vartheta_x = 0.
\end{cases}
$$
As shown in \cite{SR,SJR}, the solution
semigroup associated to the TMC system is exponentially stable if and only if
$\chi_\varsigma=0$, and polynomially stable with optimal decay rate $\sqrt{t}$ when $\chi_\varsigma\neq0$.
In \cite{TIM} it is proved that the semigroup $U(t)$ associated to the TGP system
with the choice $g=g_\varsigma$
is exponentially stable if and only if the same does the semigroup associated to the TMC system.
Moreover, arguing as in Section \ref{profcatta} of the present paper, one can prove
that if $U(t)$ with the choice $g=g_\varsigma$ is polynomially stable with optimal decay rate $\sqrt{t}$ then
the same does the semigroup associated to the TMC system.
Hence, both the exponential and the polynomial stability results
obtained in \cite{SR,SJR} are recovered.
We refrain from providing detailed proofs here, leaving them to the interested reader.

\subsection*{II} A key feature of the models considered in this work is that
the only source of dissipation is given by the thermal effects.
It goes without saying that other thermal damping are possible.
For instance, one can neglect the temperature $\xi$ and consider
the ``reduced BGP system"
\begin{equation}
\label{redbnew}
\begin{cases}
\rho_1 \varphi_{tt} -k(\varphi_x +\psi +l w)_x - l k_0(w_x - l\varphi) = 0,\\\noalign{\vskip3mm}
\rho_2 \psi_{tt} -b\psi_{xx} +k(\varphi_x +\psi +lw) +\gamma\vartheta_x= 0,\\\noalign{\vskip3mm}
\rho_1 w_{tt}  -k_0(w_x - l\varphi)_x + lk(\varphi_x +\psi +lw)= 0,\\\noalign{\vskip2.2mm}
\displaystyle
\rho_3 \vartheta_t - \varpi\int_0^\infty g(s)\vartheta_{xx}(t-s)\d s + \gamma\psi_{xt}=0.
\end{cases}
\end{equation}
This model has been analyzed in \cite{bresse-dello}, where it is shown that exponential
stability occurs if and only if
\begin{equation}
\label{nscbressedelloprimo}
\chi_g=0\, \and\, \chi_1=0.
\end{equation}
On the other hand, one can neglect the temperature $\vartheta$ and consider
the reduced system
\begin{equation}
\label{redbnewbis}
\begin{cases}
\rho_1 \varphi_{tt} -k(\varphi_x +\psi +l w)_x - l k_0(w_x - l\varphi) + l\gamma \xi= 0,\\\noalign{\vskip3mm}
\rho_2 \psi_{tt} -b\psi_{xx} +k(\varphi_x +\psi +lw) = 0,\\\noalign{\vskip3mm}
\rho_1 w_{tt}  -k_0(w_x - l\varphi)_x + lk(\varphi_x +\psi +lw) +\gamma \xi_x= 0,\\\noalign{\vskip2.2mm}
\displaystyle
\rho_3 \xi_t - \varpi\int_0^\infty h(s)\xi_{xx}(t-s)\d s + \gamma(w_{xt} - l \varphi_t)=0.
\end{cases}
\end{equation}
To the best of our knowledge, the latter has not been studied so far in the literature, but we
conjecture that the necessary and sufficient condition for its exponential stability is
\begin{equation}
\label{nscbressenew}
\chi_h=0\, \and\, \chi_0=0.
\end{equation}
In the light of the previous discussions, this insight becomes clear if one considers the
Maxwell-Cattaneo version of \eqref{redbnewbis}
$$
\begin{cases}
\rho_1 \varphi_{tt} -k(\varphi_x +\psi +l w)_x - l k_0(w_x - l\varphi) + l\gamma \xi= 0,\\\noalign{\vskip0.5mm}
\rho_2 \psi_{tt} -b\psi_{xx} +k(\varphi_x +\psi +lw) = 0,\\\noalign{\vskip0.5mm}
\rho_1 w_{tt}  -k_0(w_x - l\varphi)_x + lk(\varphi_x +\psi +lw) +\gamma \xi_x= 0,\\\noalign{\vskip0.5mm}
\rho_3 \xi_t + q_x + \gamma(w_{xt} - l \varphi_t)=0,\\\noalign{\vskip0.5mm}
\tau \varpi q_t + q + \varpi \xi_x=0,
\end{cases}
$$
where, as shown in \cite{SAREzamp}, exponential stability occurs if and only if
$$
\chi_\tau=0\, \and\, \chi_0=0.
$$
Note that both conditions \eqref{nscbressedelloprimo}
and \eqref{nscbressenew} are incompatible with \eqref{phydef}.

One might think that the exponential stability analysis of \eqref{redbnew}
and \eqref{redbnewbis} is more challenging with respect to the one of \eqref{bresse0},
since the latter has a stronger damping mechanism contributed by two equations.
Such a thought appears unfounded. Indeed, as for the BF system, in \eqref{bresse0}
the variables $\psi$ and $w$ are effectively damped
but the variable $\varphi$ is only
indirectly damped (cf.\ the discussion at the end
of Subsection \ref{stabbf}).
In order to achieve the exponential stability,
one has to ``enucleate" the damping contribution of $\vartheta$ using the equality $\chi_g=0$
to stabilize exponentially the variable $\varphi$,
and do the same for the damping contribution of $\xi$ using the equality $\chi_h=0$. As in the BF system,
these conditions provide appropriate cancellations of some higher-order terms that pop up in the estimates.
The exponential stability of~\eqref{redbnew} requires basically
half of the job:\ in \eqref{redbnew} only the variable $\psi$ is effectively damped,
so that one has to use the equality $\chi_g=0$ to stabilize exponentially
the variable $\varphi$ and the equality $\chi_1=0$ to stabilize exponentially the variable $w$, but
the condition $\chi_1=0$ is quite easy to exploit (see \cite[Section~6]{bresse-dello}).
Similar remarks apply in the analysis of~\eqref{redbnewbis}.

\subsection*{III}
The methodology in this paper is based on resolvent estimates
combined with the abstract results of L. Gearhart $\&$ J. Pr\"{u}ss, A. Borichev $\&$ Y. Tomilov
and  C.J.K. Batty $\&$ T. Duyckaerts \cite{BattyDuy,BT,Ge,Pru}.
Although this approach is not new, the analysis of the present
work presents some peculiar elements. We highlight the following aspects.

\begin{itemize}
\item In order to show the optimality of the polynomial decay rate,
one needs to exploit sharp lower resolvent estimates which
require the use of a quantified version of the Riemann-Lebesgue lemma recently obtained in \cite{DLP}.

\smallskip
\item
We provide a theoretical method to show that the polynomial
decay rate $\sqrt{t}$ of the BGP system
with the choice \eqref{expchoice} implies automatically the same polynomial decay rate of the
BMC system. It is very likely that such a method can be successfully applied to other
models too.

\smallskip
\item
The study of the BGP and the TGP systems presents some difficulties connected
with a structural lack of compactness which complicates the spectral analysis of the
infinitesimal generators (see Remark \ref{remcompt} for details).

\smallskip
\item The complexity of the BGP system
(three wave equations coupled with two integrodifferential hyperbolic equations) requires
a rather heavy technical effort when performing the resolvent estimates.

\end{itemize}

\subsection*{IV}
In the limit case $\varsigma=\tau=0$,
the BMC system (formally) reduces to the BF system, and condition \eqref{nsBMC}
boils down to \eqref{nscBF}.
Actually, as noticed in \cite{bresse-dello,TIM}, the BF system can be recovered directly from the BGP system by means
of an appropriate singular
limit procedure in which the kernels
approach the Dirac mass at zero $\delta_0$. More precisely, for $\eps>0$, let us set
\begin{equation}\label{resca}
g_\eps (s) = \frac{1}{\eps} g \left(\frac{s}{\eps} \right) \and
h_\eps (s) = \frac{1}{\eps} h \left(\frac{s}{\eps} \right).
\end{equation}
Since $g_\eps\to \delta_0$ and $h_\eps\to\delta_0$ in the distributional sense as $\eps\to0$,
system~\eqref{bresse0} with the choice
$g=g_{\eps}$ and $h=h_{\eps}$ (formally) boils down to \eqref{bresse-fou} in the limit $\eps\to0$.
Note also that
$$\chi_{g_\eps}\to-\frac{\rho_1}{k}\chi_0 \and
\chi_{h_\eps}\to-\frac{\rho_1}{k}\chi_1$$ when $\eps\to0$, so that condition \eqref{nsBGP} reduces to \eqref{nscBF}.
The same phenomenon appears in the passage from the TGP system \eqref{TIMGP}
to the TF system \eqref{TF} (see \cite{TIM} for details).
A rigorous proof of the convergence of solutions to equations with memory
through the ones of the limit equation when the kernel collapses into a Dirac mass
has been given in \cite{AMNE}.

\subsection*{V} As pointed out in \cite{bresse-dello,TIM}, once the exponential stability properties
of the BGP system are known it is possible to characterize the ones
of the Bresse-Coleman-Gurtin system
\begin{equation}
\label{bresseCG}
\begin{cases}
\rho_1 \varphi_{tt} -k(\varphi_x +\psi +l w)_x - l k_0(w_x - l\varphi) + l\gamma \xi= 0,\\\noalign{\vskip3mm}
\rho_2 \psi_{tt} -b\psi_{xx} +k(\varphi_x +\psi +lw) +\gamma\vartheta_x= 0,\\\noalign{\vskip3.3mm}
\rho_1 w_{tt}  -k_0(w_x - l\varphi)_x + lk(\varphi_x +\psi +lw) +\gamma \xi_x= 0,\\\noalign{\vskip2.2mm}
\displaystyle
\rho_3 \vartheta_t -\varpi(1-m)\vartheta_{xx} - \varpi m
\int_0^\infty g(s)\vartheta_{xx}(t-s)\d s + \gamma\psi_{xt}=0,\\\noalign{\vskip1.1mm}
\displaystyle
\rho_3 \xi_t -\varpi(1-m)\xi_{xx} -\varpi m \int_0^\infty h(s)\xi_{xx}(t-s)\d s + \gamma(w_{xt} - l \varphi_t)=0.\\
\end{cases}
\end{equation}
In the model above, $m\in (0,1)$ is a fixed parameter and the
temperatures obey the parabolic-hyperbolic law introduced by B.D. Coleman and M.E. Gurtin in
\cite{CGu}. The limit cases $m=0$ and $m=1$ correspond to the BF system \eqref{bresse-fou} and the BGP system
\eqref{bresse0},
respectively. The solution semigroup
associated to \eqref{bresseCG} in the Dafermos history framework is exponentially stable if and only if
$\chi_0 \hspace{0.4mm} \chi_1=0$, meaning that the parabolic character
prevails on the hyperbolic one. To see that, following the procedure introduced in \cite{TIM}, let us set
for $\eps>0$
$$
g_{\eps} (s) = \frac{1-m}{\eps} g \left(\frac{s}{\eps} \right) + m g(s)\and
h_{\eps} (s) = \frac{1-m}{\eps} h \left(\frac{s}{\eps} \right) + m h(s).
$$
Since $g_\eps\to (1-m)\delta_0 + m g$ and $h_\eps\to(1-m)\delta_0 + m h$
in the distributional sense when $\eps\to0$,
system~\eqref{bresse0} with the choice
$g=g_{\eps}$ and $h=h_{\eps}$ (formally) boils down to \eqref{bresseCG} as $\eps\to0$.
At the same time, we have the convergence
$$\chi_{g_\eps}\to-\frac{\rho_1}{k}\chi_0 \and
\chi_{h_\eps}\to-\frac{\rho_1}{k}\chi_1$$ for $\eps\to0$, so that
condition \eqref{nsBGP} reduces to $\chi_0 \hspace{0.4mm} \chi_1=0$.

\subsection*{VI} As mentioned in the last section of \cite{SARE}, it is possible
to consider ``mixed Bresse models" where the temperatures obey two different thermal laws.
For instance, one can study a system in which $\vartheta$
satisfies the Gurtin-Pipkin law and $\xi$ the Maxwell-Cattaneo one (and vice versa),
or in which $\vartheta$
satisfies the Gurtin-Pipkin law and $\xi$ the Fourier one (and vice versa), and other combinations
(including the Coleman-Gurtin law).
All the exponential stability conditions for these systems can be derived from \eqref{nsBGP}
by choosing appropriate kernels as in \eqref{expchoice} or through appropriate
singular limit procedures as above. For instance,
if $\vartheta$ satisfies the Gurtin-Pipkin law and $\xi$ the Maxwell-Cattaneo one
the exponential stability condition reads $\chi_g\hspace{0.4mm} \chi_\tau=0$, while if $\vartheta$
satisfies the Gurtin-Pipkin law and $\xi$ the Fourier one
it reads $\chi_g\hspace{0.4mm} \chi_1=0$, and so on.


\section{Notation}
\label{not}

\noindent
The notation is mostly standard throughout. In particular, $\R^+\doteq(0,\infty)$
denotes the positive half-line,
$\mathbb{N}=1,2,3,\ldots$ the set of positive integers, and
$\i\R$ the imaginary axis in the complex plane. The ``Big O" and
``Little-o" notations for functions or sequences have the standard meaning.
Given a closed linear operator $\mathsf{L}$ acting on a complex
Hilbert space, we denote the domain by $\D(\mathsf{L})$, the resolvent set
by $\varrho(\mathsf{L})$ and the spectrum by $\sigma(\mathsf{L})$.
The symbols $L^2, H^1,H_0^1,H^2$ indicate the usual complex
Lebesgue and Sobolev spaces on the interval $(0,\ell)$, while
$\langle\cdot,\cdot\rangle$ and $\|\cdot\|$ stand for the
standard inner product and norm on $L^2$.
Since no confusion can occur, the symbol $\|\cdot\|$ will be also used to
denote the operator norm. We will also work with
the Hilbert spaces of zero-mean functions
$$
L^2_*=\big\{ f\in L^2 : \int_0^\ell f(x) \d x = 0\big\}\,\,
\and\,\,  H^1_* = H^1\cap L^2_*,
$$
the latter equipped with the gradient norm (due to the Poincar\'e inequality).
Along the paper, we routinely employ the Young, H\"older
and Poincar\'e inequalities without explicit mention.


\section{Rigorous Statements for the BGP System}
\label{rigsecBGP}

\subsection{Assumptions on the kernels}
\label{assmemker}

The convolution kernels $g$ and $h$
are nonnegative bounded convex
summable functions, both of unitary total mass
and having the explicit form
$$
g(s) = \int_s^\infty \mu(r) \d r \, \and h(s) = \int_s^\infty \nu(r) \d r,
$$
where $\mu,\nu:\R^+\to\R^+$, called memory kernels, are
nonincreasing absolutely continuous functions. In particular,
$\mu$ and $\nu$ are summable with
$$\int_0^\infty \mu(r) \d r = g(0) \and \int_0^\infty \nu(r) \d r = h(0).
$$
We also require that $\mu$ and
$\nu$ are bounded about zero, namely
\begin{align*}
\mu(0) \doteq \lim_{s\to 0} \mu(s)<\infty\and
\nu(0) \doteq \lim_{s\to 0} \nu(s) <\infty.
\end{align*}
Finally, we assume the so-called Dafermos conditions
\begin{align}
\label{assnucleo1}
&\mu'(s) + \delta_\mu\,\mu(s) \leq 0,\\
\label{assnucleo2}
&\nu'(s) + \delta_\nu\,\nu(s) \leq 0,
\end{align}
for some $\delta_\mu,\delta_\nu>0$ and almost every $s>0$.

\subsection{Memory spaces}\label{memspace} We introduce the
so-called memory spaces
$$\M = L^2_\mu(\R^+; H^1_0) \and \N = L^2_\nu(\R^+; H^1_0)$$
of square summable $H_0^1$-valued functions on $\R^+$ with respect to the measures $\mu(s)\d s$
and $\nu(s) \d s$, respectively,
endowed with the inner products
\begin{align*}
&\langle\eta_1,\eta_2\rangle_\M=\int_0^\infty \mu(s) \langle\eta_{1x}(s),\eta_{2x}(s)\rangle \d s,\\
&\langle\xi_1,\xi_2\rangle_\N=\int_0^\infty \nu(s) \langle\xi_{1x}(s),\xi_{2x}(s)\rangle \d s.
\end{align*}
The induced norms will be denoted by $\|\cdot\|_\M$ and $\|\cdot\|_\N$.
Moreover, we consider the infinitesimal generator of the right-translation semigroup on $\M$,
that is, the operator
$$T \eta=-\eta' \qquad\, \text{with}\qquad\, \D(T)=\big\{\eta\in{\M}:\eta'\in\M,\,\,
\lim_{s\to 0}\|\eta_x(s)\|=0\big\},$$
where $\eta'$ stands for the weak derivative with respect to the variable $s\in \R^+$.
We will also work with the infinitesimal generator of the right-translation semigroup on $\N$,
denoted with the same symbol $T$ and defined in the same way.

\subsection{Extended memory space}
We introduce the so-called extended memory space
$$
\H = H_0^1 \times L^2 \times H_*^1 \times L_*^2\times H_*^1 \times L_*^2
\times L^2 \times \M \times L^2 \times \N
$$
equipped with the norm
\begin{align*}
\|u\|_\H^2
&= k\|\varphi_x+\psi+lw\|^2 + \rho_1\|\Phi\|^2 +b\|\psi_x\|^2+\rho_2\|\Psi\|^2 +k_0\|w_x-l\varphi\|^2\\
&\quad +\rho_1\|W\|^2+ \rho_3\|\vartheta\|^2
+ \varpi \|\eta\|^2_\M + \rho_3 \|\xi\|^2  + \varpi\|\zeta\|^2_\N
\end{align*}
for every $u = (\varphi,\Phi,\psi,\Psi,w,W,\vartheta,\eta,\xi,\zeta)\in\H$.
The inner product associated to $\|\cdot\|_\H$ will be denoted by $\l \cdot, \cdot \r_\H$.
As customary in the analysis of Bresse systems, we are tacitly assuming that
\begin{equation}
\label{condition}
l\ell \neq n \pi,\quad\, \forall n \in \mathbb{N}.
\end{equation}
In fact, if \eqref{condition} is violated it is not difficult
to construct nonzero $u\in\H$ with $\|u\|_\H=0$.
Instead, when \eqref{condition} holds true,
$\|\cdot\|_\H$ becomes a norm on $\H$, equivalent to the standard
product norm
$$
\boldsymbol{|}u\boldsymbol{|}_\H^2 = \|\varphi_x\|^2 + \|\Phi\|^2 + \|\psi_x\|^2+ \|\Psi\|^2 + \|w_x\|^2
+ \|W\|^2+ \|\vartheta\|^2
+ \|\eta\|^2_\M + \|\xi\|^2  + \|\zeta\|^2_\N.
$$
In particular, there exists a structural constant $\mathfrak{c}>0$ such that
\begin{equation}
\label{equivnorm}
\mathfrak{c}\boldsymbol{|}u\boldsymbol{|}_\H \leq \|u\|_{\H} \leq
\frac1{\mathfrak{c}} \boldsymbol{|}u\boldsymbol{|}_\H,\quad\, \forall u \in \H.
\end{equation}
Indeed, the second inequality above is immediate, and within \eqref{condition} is not hard to check
that $\|\cdot\|_\H$ is a Banach norm on $\H$. Thus
\eqref{equivnorm} follows from the Open Mapping Theorem.
Along the paper, relation~\eqref{equivnorm} will be tacitly employed in several occasions.

\subsection{The semigroup}\label{secsemi}
We reformulate the BGP system \eqref{bresse0} making use
of the history framework of Dafermos \cite{DAF}. To this end, for $s>0$,
we consider the auxiliary variables
$$
\eta^t(x,s)=\int_{0}^s \vartheta(x,t-r)\d r \and \zeta^t(x,s)=\int_{0}^s \xi(x,t-r)\d r,$$
and we rewrite \eqref{bresse0} in the form
\begin{equation}
\label{bresse-rew}
\begin{cases}
\rho_1 \varphi_{tt} -k(\varphi_x +\psi +l w)_x - l k_0 (w_x - l\varphi) + l\gamma \xi= 0,\\\noalign{\vskip2.5mm}
\rho_2 \psi_{tt} -b\psi_{xx} +k(\varphi_x +\psi +lw) +\gamma\vartheta_x= 0,\\\noalign{\vskip2.9mm}
\rho_1 w_{tt}  -k_0(w_x - l\varphi)_x + l k(\varphi_x +\psi +lw) +\gamma \xi_x= 0,\\\noalign{\vskip0.8mm}
\displaystyle
\rho_3\vartheta_t -\varpi\int_0^\infty \mu (s)\eta_{xx}(s)\d s + \gamma \psi_{xt}=0,\\\noalign{\vskip0.7mm}
\eta_t= T \eta + \vartheta,\\
\displaystyle
\rho_3\xi_t -\varpi\int_0^\infty \nu(s)\zeta_{xx}(s)\d s + \gamma(w_{xt} - l \varphi_t)=0,\\\noalign{\vskip0.7mm}
\zeta_t = T \zeta + \xi.
\end{cases}
\end{equation}
Introducing the state vector
$u(t) = (\varphi(t),\Phi(t),\psi(t),\Psi(t),w(t),W(t),\vartheta(t),\eta^t,\xi(t),\zeta^t)\in\H,$
we view \eqref{bresse-rew} as the abstract ODE on $\H$
\begin{equation}
\label{refabs}
\ddt u(t) = \mathsf{A} u(t),
\end{equation}
where the linear operator $\mathsf{A}$ is defined as
$$
\mathsf{A}
\left(\begin{matrix}
\varphi\\\noalign{\vskip.5mm}
\Phi\\\noalign{\vskip.5mm}
\psi\\\noalign{\vskip.5mm}
\Psi\\\noalign{\vskip.5mm}
w\\\noalign{\vskip.5mm}
W\\\noalign{\vskip.5mm}
\vartheta\\\noalign{\vskip.5mm}
\eta\\\noalign{\vskip.5mm}
\xi\\\noalign{\vskip.5mm}
\zeta
\end{matrix}
\right)
=\left(
\begin{matrix}
\Phi\\
\frac{k}{\rho_1} (\varphi_x + \psi +lw)_x + \frac{lk_0}{\rho_1}(w_x-l\varphi)-\frac{l\gamma}{\rho_1} \xi\\
\Psi\\
\frac{b}{\rho_2}\psi_{xx} - \frac{k}{\rho_2}(\varphi_x+\psi+lw) - \frac{\gamma}{\rho_2}\vartheta_x\\
W\\
\frac{k_0}{\rho_1}(w_x-l\varphi)_x - \frac{lk}{\rho_1} (\varphi_x + \psi +lw)-\frac{\gamma}{\rho_1} \xi_x\\
\noalign{\vskip1mm}
\frac{\varpi}{\rho_3}\int_0^\infty \mu(s) \eta_{xx}(s)\d s - \frac{\gamma}{\rho_3}\Psi_x\\
\noalign{\vskip.5mm}
T\eta + \vartheta\\
\noalign{\vskip.5mm}
\frac{\varpi}{\rho_3}\int_0^\infty \nu(s)\zeta_{xx}(s)\d s - \frac{\gamma}{\rho_3}(W_{x} - l \Phi)\\
\noalign{\vskip.5mm}
T \zeta + \xi
\end{matrix}
\right)
$$
with domain
$$
\D(\mathsf{A}) = \left\{u\in\H \left|\,\,
\begin{matrix}
\varphi \in H^2\\
\Phi, \psi_x,w_x\in  H_0^1\\
\Psi, W \in H^1\\
\vartheta,\xi \in H_0^1\\
\int_0^\infty \mu(s)\eta(s)\d s,\int_0^\infty \nu(s)\zeta(s)\d s \in H^2\\
\eta,\zeta \in \D(T)\\
\end{matrix}\right.
\right\}.
$$
For every $u\in\D(\mathsf{A})$,
a straightforward computation entails
$$
\Re \l \mathsf{A} u , u\r_\H = \varpi \, \Re \l T \eta, \eta\r_{\M} +  \varpi\,\Re \l T \zeta, \zeta\r_{\N}.
$$
Moreover, as shown in \cite{Terreni}, we have the equality
$$
\Re \l T \eta, \eta\r_{\M} +  \Re \l T \zeta, \zeta\r_{\N}
=-\frac{1}{2}\big[\Gamma[\eta]  + \Gamma[\zeta] \big],
$$
where we set
$$
\Gamma[\eta] = \int_0^\infty -\mu'(s) \|\eta_x(s)\|^2 \d s \and
\Gamma[\zeta] = \int_0^\infty -\nu'(s) \|\zeta_x(s)\|^2 \d s.
$$
Since $\Gamma[\eta]$ and $\Gamma[\zeta]$ are nonnegative, we are led to
\begin{equation}
\label{dissip}
\Re \l \mathsf{A} u , u\r_\H =
-\frac{\varpi}{2}\big[\Gamma[\eta]  + \Gamma[\zeta] \big] \leq 0,
\end{equation}
meaning that $\mathsf{A}$ is dissipative. In addition, by means of standard techniques
based on the Lax-Milgram theorem, one can prove that the operator $1-\mathsf{A}$ is surjective
(see e.g. \cite{butanino,viscofraz} for details on the procedure in the context of equations with memory).
In particular, $\mathsf{A}$ is densely defined \cite[Theorem 4.6]{PAZY} and, due to the Lumer-Phillips theorem,
it generates a contraction semigroup
$$
S(t) = \e^{t \mathsf{A}} : \H \to \H.
$$
In particular, for every initial datum $u_0\in\H$, equation \eqref{refabs} admits a unique (mild) solution
$u$ given by $u(t)=S(t)u_0$. If $u_0\in\D(\mathsf{A})$,
then the solution $u$ is classical (see e.g.\ \cite{PAZY}).

\subsection{The results}
The rigorous statements of the stability results for the BGP system anticipated in the Introduction
read as follows ($\chi_g$ and $\chi_h$ have been defined in Subsection~\ref{infmain}).

\begin{theorem}
\label{EXP-STAB-TEO}
Assume that $\chi_g\hspace{0.3mm} \chi_h=0$. Then the semigroup $S(t)$ is exponentially stable, namely,
there exist two structural constants\hspace{0.2mm} $\omega>0$ and $K=K(\omega)\geq1$ such that
\begin{equation}
\label{defexp}
\|S(t)\| \leq K \e^{-\omega t}, \quad\, \forall t\geq0.
\end{equation}
\end{theorem}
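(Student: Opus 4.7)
The plan is to apply the Gearhart--Pr\"uss theorem \cite{Ge,Pru}: since $\mathsf{A}$ generates a contraction semigroup on $\H$, the exponential bound \eqref{defexp} is equivalent to the two conditions
$$
\i\R\subset\varrho(\mathsf{A})\and \sup_{\beta\in\R}\|(\i\beta-\mathsf{A})^{-1}\|<\infty.
$$
I would verify both by contradiction, adapting the scheme employed for the reduced systems in \cite{bresse-dello,TIM}, with the additional bookkeeping forced by the presence of two memory equations.

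For the spectral inclusion, $0\in\varrho(\mathsf{A})$ follows from a Lax--Milgram argument on the stationary problem, analogous to the surjectivity of $1-\mathsf{A}$ used to generate the semigroup. If, toward a contradiction, $\i\beta_*\in\sigma(\mathsf{A})$ for some $\beta_*\neq 0$, one produces $u_n\in\D(\mathsf{A})$ with $\|u_n\|_\H=1$, $\beta_n\to\beta_*$, and $(\i\beta_n-\mathsf{A})u_n\to 0$ in $\H$. Taking the real part of the inner product with $u_n$ and invoking \eqref{dissip} yields $\Gamma[\eta_n]+\Gamma[\zeta_n]\to 0$, which combined with the Dafermos conditions \eqref{assnucleo1}--\eqref{assnucleo2} forces $\|\eta_n\|_\M,\|\zeta_n\|_\N\to 0$. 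The two memory equations in \eqref{bresse-rew}, together with the shift structure of $T$, then give $\vartheta_n,\xi_n\to 0$ in $L^2$, after which the first three (mechanical) equations reduce to an isothermal eigenvalue problem whose only solution under \eqref{condition} is trivial, contradicting $\|u_n\|_\H=1$.

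For the uniform resolvent bound, I would again argue by contradiction with sequences $\beta_n\in\R$ and $u_n\in\D(\mathsf{A})$ satisfying $\|u_n\|_\H=1$ and $F_n\doteq(\i\beta_n-\mathsf{A})u_n\to 0$ in $\H$; the previous step allows us to assume $|\beta_n|\to\infty$. Dissipation again yields $\|\eta_n\|_\M,\|\zeta_n\|_\N\to 0$, and propagating this information through the two integrodifferential equations delivers $\|\vartheta_n\|,\|\xi_n\|\to 0$. The task reduces to showing that the mechanical components $(\varphi_n,\Phi_n,\psi_n,\Psi_n,w_n,W_n)$ vanish asymptotically, which contradicts $\|u_n\|_\H=1$; this is where the hypothesis $\chi_g\chi_h=0$ must enter.

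In the spirit of the BF analysis, $\psi$ and $w$ are effectively damped through their direct coupling with $\vartheta$ and $\xi$, while $\varphi$ is only indirectly damped. The strategy is to test $(\i\beta_n-\mathsf{A})u_n=F_n$ against a carefully engineered chain of multipliers --- linear combinations of components of $u_n$, rescaled by appropriate inverse powers of $\beta_n$ --- and to exploit $|\beta_n|\to\infty$ to absorb lower-order contributions. At the highest frequency, the residual obstructions to closing the estimate turn out to be proportional to $\chi_g$ (when the coupling is routed through the $\psi$--$\vartheta$ pair) or to $\chi_h$ (through the $w$--$\xi$ pair); the hypothesis kills one of them, and the argument closes. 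I expect the main obstacle to be disentangling the two loops in the presence of the curvature $l\neq 0$, which mixes the second and third mechanical equations and forces a symmetric case analysis ($\chi_g=0$ versus $\chi_h=0$), together with rather delicate multiplier bookkeeping to track the cancellations precisely.
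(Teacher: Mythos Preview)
Your proposal is correct and follows essentially the same route as the paper: Gearhart--Pr\"uss, dissipation to control the memory variables, propagation through the memory equations to control the temperatures, and then a multiplier chain on the mechanical variables in which the obstruction term is proportional to $\chi_g$ (via the $\psi$--$\vartheta$ coupling) or $\chi_h$ (via the $w$--$\xi$ coupling), handled by a case split. The only packaging difference is that the paper carries out the whole argument as a sequence of explicit $\eps$-quantified resolvent estimates (Lemmas~\ref{THETA}--\ref{PHI}) rather than a contradiction with normalized sequences; this has the advantage that the very same lemmas are reused verbatim for the polynomial decay in Theorem~\ref{POL-STAB-TEO}, whereas your contradiction formulation would need to be reworked there.
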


\begin{theorem}
\label{POL-STAB-TEO}
The semigroup $S(t)$ is polynomially semiuniformly stable with decay rate $\sqrt{t}$, namely,
$0\in \varrho(\mathsf{A})$ and
there exists a structural constant $K>0$ such that
\begin{equation}
\label{condpoly}
\|S(t)\mathsf{A}^{-1}\| \leq \frac{K}{\sqrt{t}}, \quad\, \forall t>0.
\end{equation}
If in addition $\chi_g \hspace{0.3mm} \chi_h\neq0$, then such a decay rate is optimal, namely
\begin{equation}
\label{condpolyopt}
\limsup_{t\to\infty} \sqrt{t}\, \|S(t)\mathsf{A}^{-1}\| >0.
\end{equation}
\end{theorem}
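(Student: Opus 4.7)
The plan is to invoke the Borichev-Tomilov theorem together with the refinement of Batty-Duyckaerts: since $S(t)$ is a contraction $C_0$-semigroup, the decay $\|S(t)\mathsf{A}^{-1}\|=O(t^{-1/\alpha})$ is equivalent to $\i\R\subset\varrho(\mathsf{A})$ and $\|(\i\lambda-\mathsf{A})^{-1}\|=O(|\lambda|^\alpha)$ as $|\lambda|\to\infty$. For the announced rate $1/\sqrt{t}$ we target $\alpha=2$, while the optimality part demands a matching lower bound on the resolvent along a sequence $\lambda_n\to\infty$ under the assumption $\chi_g\chi_h\neq 0$.

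First I would check that $0\in\varrho(\mathsf{A})$. Given $f=(f_1,\ldots,f_{10})\in\H$, the equation $\mathsf{A}u=f$ determines $\Phi=f_1$, $\Psi=f_3$, $W=f_5$ directly, while the transport equations $T\eta+\vartheta=f_8$ and $T\zeta+\xi=f_{10}$ can be inverted explicitly in terms of $\vartheta,\xi$ and the data. Substituting into the remaining five equations yields a coercive symmetric elliptic system for $(\varphi,\psi,w,\vartheta,\xi)$, solvable by Lax-Milgram, with $\|u\|_\H\leq C\|f\|_\H$; in particular $\|\mathsf{A}^{-1}\|<\infty$, as required by the Borichev-Tomilov criterion.

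Next comes the high-frequency resolvent estimate. Fixing $\lambda\in\R$ with $|\lambda|\geq 1$, $f\in\H$ and $u=(\i\lambda-\mathsf{A})^{-1}f$, the dissipation computation \eqref{dissip} gives
\begin{equation*}
\tfrac{\varpi}{2}\bigl(\Gamma[\eta]+\Gamma[\zeta]\bigr)=\Re\langle f,u\rangle_\H\leq\|f\|_\H\|u\|_\H,
\end{equation*}
so by the Dafermos conditions \eqref{assnucleo1}-\eqref{assnucleo2} the memory components satisfy $\|\eta\|_\M^2+\|\zeta\|_\N^2\leq C\|f\|_\H\|u\|_\H$. The transport equations $\i\lambda\eta+\eta'=\vartheta$ and $\i\lambda\zeta+\zeta'=\xi$ then express $\vartheta,\xi$ in terms of weighted integrals of $\eta,\zeta$, at the cost of an extra power of $|\lambda|^{1/2}$. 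These feed as sources into the spectral form of the fourth and sixth equations of \eqref{bresse-rew}, from which the ``effectively damped'' variables $\psi$ and $w$ are controlled through tailored multipliers. The indirectly damped variable $\varphi$ is the delicate one: multipliers of $(\varphi_x+\psi+lw)_x$ and $(w_x-l\varphi)$ type, applied to the first Bresse equation, produce higher-order error terms whose balance is tight and ultimately pins the resolvent bound at order $|\lambda|^2$. The same bootstrap, run by contradiction, rules out approximate eigenvectors for $\lambda\in\R$ and establishes $\i\R\subset\varrho(\mathsf{A})$; crucially, this cannot be deduced from a compactness argument, since $\mathsf{A}^{-1}$ is not compact (cf.\ Remark \ref{remcompt}).

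For the optimality assertion, assuming $\chi_g\chi_h\neq 0$, the strategy is to construct explicit pairs $(\lambda_n,f_n)$ with $|\lambda_n|\sim n$ such that $\|(\i\lambda_n-\mathsf{A})^{-1}f_n\|_\H\geq c|\lambda_n|^2\|f_n\|_\H$. Choosing $f_n$ supported on the $n$-th spatial eigenmode and solving the spectral system mode by mode reduces matters to estimating Fourier-Laplace transforms of the memory kernels such as $\int_0^\infty g(s)\e^{-\i\lambda_n s}\d s$. The non-vanishing of $\chi_g$ (respectively $\chi_h$) obstructs the cancellations that, in the exponentially stable case, would force extra decay of these symbols; extracting the precise rate nonetheless requires quantitative information on the decay of such integrals along $\lambda_n\to\infty$, which is exactly what the quantified Riemann-Lebesgue lemma of \cite{DLP} is designed to supply. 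The main obstacle will be the $|\lambda|^2$ upper estimate: the BGP system couples three wave equations with two integrodifferential heat equations, so tracking powers of $|\lambda|$ through the multiplier bookkeeping while handling the memory-induced source terms and the structural absence of resolvent compactness forms the technical core of the proof.
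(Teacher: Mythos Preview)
Your proposal is correct and follows essentially the same route as the paper: Borichev--Tomilov for the upper bound via a chain of multiplier estimates built on the dissipation identity \eqref{dissip}, and Batty--Duyckaerts combined with an explicit mode-by-mode construction (using the quantitative Riemann--Lebesgue lemma of \cite{DLP}) for the lower bound. The only notable deviation is that you propose to verify $0\in\varrho(\mathsf{A})$ by a direct Lax--Milgram argument, whereas the paper handles all of $\i\R$ at once via an approximate-eigenvector contradiction (Theorem~\ref{incl}); both are fine, and your description of recovering $\vartheta,\xi$ from the transport equations is slightly imprecise (the paper instead multiplies \eqref{R8}--\eqref{R10} by $\vartheta,\xi$ in weighted spaces, cf.\ Lemmas~\ref{THETA}--\ref{XI}), but the overall architecture is the same.
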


It is readily seen that condition \eqref{condpoly} can be reformulated as
$$
\|S(t)u_0\|_\H \leq \frac{K}{\sqrt{t}}\|\mathsf{A} u_0\|_\H,\quad\, \forall t>0,\,\,\, \forall u_0\in\D(\mathsf{A}).
$$
Thus, for all $u_0\in \D(\mathsf{A})$, we have the convergence $S(t)u_0\to 0$ as $t\to\infty$.
Since $S(t)$ is a contraction semigroup, we conclude that

\begin{corollary}
The semigroup $S(t)$ is stable, namely, for every fixed $u_0\in\H$ we have
$$
\lim_{t\to\infty} \|S(t)u_0\|_{\H} = 0.
$$
\end{corollary}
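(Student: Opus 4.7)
The plan is to deduce the corollary from Theorem \ref{POL-STAB-TEO} by combining the polynomial decay on $\D(\mathsf{A})$ with the contractivity of $S(t)$ via a standard density/$\varepsilon$-argument. The key observation, already made in the paragraph just before the statement, is that the estimate $\|S(t)\mathsf{A}^{-1}\|\leq K/\sqrt{t}$ of Theorem \ref{POL-STAB-TEO} is equivalent to
$$
\|S(t)v_0\|_\H \leq \frac{K}{\sqrt{t}}\|\mathsf{A} v_0\|_\H, \quad\forall t>0,\,\,\, \forall v_0\in\D(\mathsf{A}),
$$
which in particular yields $\|S(t)v_0\|_\H\to 0$ as $t\to\infty$ for every $v_0\in\D(\mathsf{A})$.

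Next, I would fix an arbitrary $u_0\in\H$ and $\varepsilon>0$. Since $\mathsf{A}$ is densely defined on $\H$ (as recalled in Subsection \ref{secsemi}), there exists $v_0\in\D(\mathsf{A})$ with $\|u_0-v_0\|_\H<\varepsilon/2$. Exploiting the contractivity of $S(t)$, namely $\|S(t)\|\leq 1$ for every $t\geq 0$, I would then write
$$
\|S(t)u_0\|_\H \leq \|S(t)(u_0-v_0)\|_\H + \|S(t)v_0\|_\H \leq \|u_0-v_0\|_\H + \|S(t)v_0\|_\H < \frac{\varepsilon}{2} + \|S(t)v_0\|_\H.
$$
By the first step, there exists $t_\varepsilon>0$ such that $\|S(t)v_0\|_\H<\varepsilon/2$ whenever $t\geq t_\varepsilon$, and hence $\|S(t)u_0\|_\H<\varepsilon$ for all such $t$. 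Since $\varepsilon$ is arbitrary, this gives $\|S(t)u_0\|_\H\to 0$ as $t\to\infty$, as desired.

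There is essentially no obstacle here: the corollary is a routine consequence of the (nontrivial) Theorem \ref{POL-STAB-TEO} together with the contractivity of $S(t)$ and the density of $\D(\mathsf{A})$, both of which are already in place.
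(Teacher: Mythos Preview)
Your proof is correct and follows exactly the same approach as the paper: decay to zero on the dense subset $\D(\mathsf{A})$ from Theorem~\ref{POL-STAB-TEO}, combined with the contractivity of $S(t)$, gives stability on all of $\H$. The paper simply states this in one line, while you have written out the standard $\varepsilon/2$ density argument explicitly.
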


When $\chi_g\hspace{0.3mm} \chi_h\neq0$, relation \eqref{condpolyopt} tells
that $S(t)$ cannot be exponentially stable.
Hence, we get

\begin{corollary}
The semigroup $S(t)$ is exponentially stable if and only if\hspace{0.2mm} $\chi_g\hspace{0.3mm} \chi_h=0$.
\end{corollary}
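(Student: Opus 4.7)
The plan is to deduce this corollary as an immediate consequence of Theorems \ref{EXP-STAB-TEO} and \ref{POL-STAB-TEO}; no new estimates are required, since the entire analytic content has already been packaged into those two statements.

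The sufficient direction can be read off verbatim from Theorem \ref{EXP-STAB-TEO}: if $\chi_g\hspace{0.3mm}\chi_h=0$, then the bound \eqref{defexp} holds, which is precisely the definition of exponential stability of $S(t)$.

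For the necessary direction, I would argue by contrapositive. Suppose $\chi_g\hspace{0.3mm}\chi_h\neq 0$. Theorem \ref{POL-STAB-TEO} then supplies simultaneously that $0\in\varrho(\mathsf{A})$, so $\mathsf{A}^{-1}$ is a bounded operator on $\H$, and the sharpness relation \eqref{condpolyopt}. Were $S(t)$ exponentially stable, so that $\|S(t)\|\leq Ke^{-\omega t}$ for some $\omega>0$ and $K\geq 1$, we would obtain
$$
\sqrt{t}\,\|S(t)\mathsf{A}^{-1}\|\leq K\,\|\mathsf{A}^{-1}\|\,\sqrt{t}\,\e^{-\omega t}\longrightarrow 0\quad\text{as } t\to\infty,
$$
contradicting the strict positivity in \eqref{condpolyopt}. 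Hence $S(t)$ cannot be exponentially stable when $\chi_g\hspace{0.3mm}\chi_h\neq 0$, and combining the two implications delivers the claimed equivalence.

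There is no genuine obstacle at this stage: the optimality half of Theorem \ref{POL-STAB-TEO} has been tailored precisely to foreclose exponential decay in the complementary parameter regime. The substantive effort—extracting a sharp lower resolvent bound via the quantitative Riemann-Lebesgue lemma of \cite{DLP} and then applying the Borichev-Tomilov characterization—already lives inside the proof of Theorem \ref{POL-STAB-TEO}, so I would simply invoke that theorem rather than reproduce its argument here.
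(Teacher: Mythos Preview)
Your argument is correct and matches the paper's own reasoning: sufficiency is Theorem~\ref{EXP-STAB-TEO}, and necessity follows because exponential decay would force $\sqrt{t}\,\|S(t)\mathsf{A}^{-1}\|\to 0$, contradicting \eqref{condpolyopt}. The paper states this in one line just before the corollary, and you have simply spelled out the implication in a bit more detail.
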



\section{Rigorous Statements for the BMC System}

\noindent
We begin by introducing the product space
$$
\V = H_0^1 \times L^2 \times H_*^1 \times L_*^2\times H_*^1 \times L_*^2
\times L^2 \times L_*^2 \times L^2 \times L_*^2
$$
equipped with the norm
\begin{align*}
\|v\|_\V^2
&= k\|\varphi_x+\psi+lw\|^2 + \rho_1\|\Phi\|^2 +b\|\psi_x\|^2+\rho_2\|\Psi\|^2 +k_0\|w_x-l\varphi\|^2\\
&\quad +\rho_1\|W\|^2+ \rho_3\|\vartheta\|^2
+ \varsigma\|p\|^2 + \rho_3 \|\xi\|^2  + \tau\|q\|^2
\end{align*}
for every
$v = (\varphi,\Phi,\psi,\Psi,w,W,\vartheta,p,\xi,q)\in\V$.
As before, we tacitly assume that \eqref{condition} is satisfied, so that
$\|\cdot\|_\V$ is a norm on $\V$, equivalent to the standard product norm.
Then, we view the BMC system \eqref{bressecatta} as the abstract ODE on $\V$
\begin{equation}
\label{refabsCATTA}
\ddt v(t) = \mathsf{B} v(t),
\end{equation}
where the linear operator $\mathsf{B}$ is defined as
$$
\mathsf{B}
\left(\begin{matrix}
\varphi\\\noalign{\vskip.5mm}
\Phi\\\noalign{\vskip.5mm}
\psi\\\noalign{\vskip.5mm}
\Psi\\\noalign{\vskip.5mm}
w\\\noalign{\vskip.5mm}
W\\\noalign{\vskip.5mm}
\vartheta\\\noalign{\vskip.5mm}
p\\\noalign{\vskip.5mm}
\xi\\\noalign{\vskip.5mm}
q
\end{matrix}
\right)
=\left(
\begin{matrix}
\Phi\\
\frac{k}{\rho_1} (\varphi_x + \psi +lw)_x + \frac{lk_0}{\rho_1}(w_x-l\varphi)-\frac{l\gamma}{\rho_1} \xi\\
\Psi\\
\frac{b}{\rho_2}\psi_{xx} - \frac{k}{\rho_2}(\varphi_x+\psi+lw) - \frac{\gamma}{\rho_2}\vartheta_x\\
W\\
\frac{k_0}{\rho_1}(w_x-l\varphi)_x - \frac{lk}{\rho_1} (\varphi_x + \psi +lw)-\frac{\gamma}{\rho_1} \xi_x\\
\noalign{\vskip1.3mm}
-\frac{1}{\rho_3}p_x - \frac{\gamma}{\rho_3}\Psi_x\\
\noalign{\vskip0.7mm}
-\frac{1}{\varsigma \varpi}p - \frac{1}{\varsigma}\vartheta_x\\
\noalign{\vskip0.7mm}
-\frac{1}{\rho_3}q_x - \frac{\gamma}{\rho_3}(W_{x} - l \Phi)\\
\noalign{\vskip0.7mm}
-\frac{1}{\tau \varpi}q - \frac{1}{\tau}\xi_x
\end{matrix}
\right)
$$
with domain
$$
\D(\mathsf{B}) = \left\{v\in\V \left|\,\,
\begin{matrix}
\varphi \in H^2\\
\Phi, \psi_x,w_x\in  H_0^1\\\noalign{\vskip0.5mm}
\Psi, W\in H^1\\
\vartheta,\xi \in H_0^1\\\noalign{\vskip0.3mm}
p,q \in H^1
\end{matrix}\right.
\right\}.
$$
According to \cite[Theorem 2.2]{SARE}, the operator $\mathsf{B}$ generates a contraction semigroup
$$T(t)=\e^{t \mathsf{B} }:\V\to\V.$$
In the same paper, the following are also shown ($\chi_\tau$ and $\chi_\varsigma$ have been
defined in Subsection~\ref{infmain}).

\begin{itemize}

\item The inclusion $\i\R\subset \varrho(\mathsf{B})$ holds.

\vspace{0.6mm}
\item The semigroup $T(t)$ is exponentially stable when
$\chi_\varsigma\hspace{0.4mm} \chi_\tau=0$.

\vspace{0.6mm}
\item The semigroup $T(t)$ is not exponentially stable when
$\chi_\varsigma\hspace{0.4mm} \chi_\tau\neq0$ and the coefficients fulfill additional
constraints (see \cite[pp.\ 3594-3595]{SARE}).
\end{itemize}

\noindent
As mentioned in the Introduction, our result completes such an analysis.

\begin{theorem}
\label{POL-STAB-TEO-CATTA}
The semigroup $T(t)$ is polynomially semiuniformly stable with decay rate $\sqrt{t}$, namely,
there exists a structural constant $K>0$ such that
\begin{equation}
\label{polycattaott}
\|T(t)\mathsf{B}^{-1}\| \leq \frac{K}{\sqrt{t}}, \quad\, \forall t>0.
\end{equation}
If in addition $\chi_\varsigma \hspace{0.3mm} \chi_\tau\neq0$, then such a decay rate is optimal, namely
\begin{equation}
\label{polycattaottimal}
\limsup_{t\to\infty} \sqrt{t}\, \|T(t)\mathsf{B}^{-1}\| >0.
\end{equation}
\end{theorem}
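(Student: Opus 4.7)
The plan is to reduce the BMC system to the BGP system with exponential memory kernels and to import Theorem~\ref{POL-STAB-TEO} with the choice $g = g_\varsigma$ and $h = h_\tau$ as in \eqref{expchoice}. The first step is to construct a bounded injection $\mathcal J : \V \to \H$ (where $\H$ is now the BGP phase space corresponding to $g_\varsigma, h_\tau$) that identifies the heat-flux variables $p, q$ with purely exponential history profiles $\eta(s), \zeta(s)$, chosen so that $\int_0^\infty \mu_\varsigma(s)\eta_x(s)\,\d s$ reproduces the contribution $-p/\varpi$ in the temperature equation of \eqref{bressecatta}, and similarly for $\zeta$ and $q$. The structural property to verify is that $\mathcal J(\V)$ is an $\mathsf{A}$-invariant closed subspace of $\H$ and that $\mathsf{A}\mathcal J = \mathcal J \mathsf{B}$: this holds because the right-shift generator $T$ maps exponentials to exponentials, so the exponential ansatz is preserved by the flow, and because the Cattaneo ODEs for $p, q$ are exactly what the memory convolutions reduce to on this invariant subspace. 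These are the computations carried out in \cite{TIM} for the Timoshenko model, to be adapted here. Moreover $\mathcal J$ is a topological isomorphism onto its image, with $\|\mathcal J v\|_\H \simeq \|v\|_\V$ and $\|\mathsf{A}\mathcal J v\|_\H \simeq \|\mathsf{B} v\|_\V$ up to structural constants; the memory norm $\varpi\|\eta\|_\M^2$ collapses, on exponential profiles, into a scalar multiple of $\varsigma\|p\|^2$.

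Granted the intertwining, the upper bound \eqref{polycattaott} is immediate: for $v_0 \in \D(\mathsf{B})$ we would write
\begin{equation*}
\|T(t)v_0\|_\V \lesssim \|\mathcal J T(t) v_0\|_\H = \|S(t)\mathcal J v_0\|_\H \lesssim \frac{1}{\sqrt{t}}\|\mathsf{A}\mathcal J v_0\|_\H \lesssim \frac{1}{\sqrt{t}}\|\mathsf{B} v_0\|_\V,
\end{equation*}
where the middle inequality is \eqref{condpoly} applied to the BGP semigroup $S(t)$. The inclusion $0 \in \varrho(\mathsf{B})$, tacitly needed for \eqref{polycattaott}, follows from $0 \in \varrho(\mathsf{A})$ together with the isomorphism property of $\mathcal J$.

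For the optimality \eqref{polycattaottimal} when $\chi_\varsigma \chi_\tau \neq 0$, I would invoke the Borichev--Tomilov theorem and reduce matters to exhibiting a sequence $\lambda_n \in \R$ with $|\lambda_n| \to \infty$ such that $\limsup_n |\lambda_n|^{-2}\|(\i\lambda_n - \mathsf{B})^{-1}\| > 0$. Concretely, I would take data $f_n \in \V$ supported on a single Fourier mode of $(0,\ell)$ of wavenumber $\sim n$, resonant with the $\varphi$-equation (which carries only indirect damping), and solve the resolvent equation $(\i\lambda_n - \mathsf{B})u_n = f_n$ on that mode by eliminating $p, q, \vartheta, \xi$ via the Cattaneo identities and the temperature equations. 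The surviving finite-dimensional system in the $(\varphi,\psi,w)$-components has a determinant whose leading-order behavior in $\lambda_n$ vanishes precisely when $\chi_\varsigma \chi_\tau = 0$; hence, under the standing assumption, the $\varphi$-component of $u_n$ is forced to grow like $\lambda_n^2$, producing the required lower bound. This mirrors the optimality argument for $S(t)$, with the simplification that, the kernels being exponential, all relevant Laplace transforms are rational and no quantitative Riemann-Lebesgue input is required.

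The main obstacle is the setup and verification of the intertwining map $\mathcal J$: one has to pin down the exponential ansatz for $(\eta, \zeta)$ with the correct prefactors so that the Cattaneo identities $\varsigma\varpi p_t + p = -\varpi\vartheta_x$ and $\tau\varpi q_t + q = -\varpi\xi_x$ together with the temperature equations of \eqref{bressecatta} are encoded exactly in the BGP dynamics on $\mathcal J(\V)$, and to confirm the equivalence of norms and domains on both sides. Once this bookkeeping is in place, the transfer of both the upper bound \eqref{polycattaott} and the lower bound \eqref{polycattaottimal} from the corresponding BGP statements is essentially mechanical.
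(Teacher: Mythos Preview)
Your overall strategy---realize the BMC system as the BGP system with the exponential kernels \eqref{expchoice} and import Theorem~\ref{POL-STAB-TEO}---is exactly what the paper does. But the mechanism you propose for the transfer has a genuine gap. You want an injection $\mathcal J:\V\to\H$ with $\mathsf{A}$-invariant image and $\mathsf{A}\mathcal J=\mathcal J\mathsf{B}$, arguing that ``$T$ maps exponentials to exponentials, so the exponential ansatz is preserved by the flow''. This fails for two reasons: first, pure exponentials in $s$ violate the domain condition $\eta(0)=0$ required for $\D(T)$; second, and more seriously, the history equation is $\eta_t=T\eta+\vartheta$, not $\eta_t=T\eta$, and the $s$-independent source $\vartheta$ destroys any finite-parameter $s$-profile. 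Concretely, for the lift $\eta_0(x,s)=-\tfrac{s}{\varpi}\int_0^x p_0$ (the natural choice, and the one the paper uses) one has $(\mathsf{A}\mathcal J v)_8=\tfrac{1}{\varpi}\int_0^x p_0+\vartheta$, constant in $s$, whereas $(\mathcal J\mathsf{B}v)_8$ is linear in $s$; so $\mathsf{A}\mathcal J\neq\mathcal J\mathsf{B}$ and $\mathcal J(\V)$ is not invariant.

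The paper sidesteps this by working with a projection in the \emph{opposite} direction: the map $\Lambda\eta=-\tfrac{1}{\varpi\varsigma^2}\int_0^\infty e^{-s/(\varpi\varsigma)}\eta_x(s)\,\d s$ (and its analogue on $\N$) satisfies, along \emph{any} BGP solution with the exponential kernel, the Cattaneo law $\varsigma\varpi\,\partial_t(\Lambda\eta)+\Lambda\eta+\varpi\vartheta_x=0$. Hence $[S(t)u_0]^\Lambda=T(t)v_0$ for the lift $u_0$ above, without any invariance claim. Your norm observations are then exactly what is needed: $\|[S(t)u_0]^\Lambda\|_\V\le\|S(t)u_0\|_\H$ from \eqref{boundlambda}, and one checks directly that $\|\mathsf{A}u_0\|_\H=\|\mathsf{B}v_0\|_\V$ (the $\M$-norm of the constant-in-$s$ profile $(\mathsf{A}u_0)_8$ matches the $\varsigma\|\cdot\|^2$ weight on $(\mathsf{B}v_0)_8$), which closes the argument for \eqref{polycattaott}.

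Your optimality sketch is essentially the paper's: Fourier-mode data resonant with $\varphi$, reduction to a finite linear system, and growth of the $\varphi$-coefficient. Two small corrections. First, the determinant's leading term does not vanish ``precisely when $\chi_\varsigma\chi_\tau=0$''; rather, it contains a free shift $c_0$ in $\lambda_n$, and one \emph{chooses} $c_0$ to kill it, which is possible exactly when $\chi_\varsigma\chi_\tau\neq0$; the subleading coefficient is then nonzero for the same reason. Second, Borichev--Tomilov as stated gives the equivalence $O(|\lambda|^\alpha)\Leftrightarrow O(t^{-1/\alpha})$ and does not directly yield the $\limsup$ statement \eqref{polycattaottimal}; the paper uses Batty--Duyckaerts to pass from a lower resolvent bound $\limsup|\lambda|^{-2}\|(\i\lambda-\mathsf{B})^{-1}\|>0$ to \eqref{polycattaottimal}.
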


Similarly to the BGP system, we also have

\begin{corollary}
The semigroup $T(t)$ is exponentially stable (if and) only if $\chi_\varsigma \hspace{0.3mm}  \chi_\tau = 0$.
\end{corollary}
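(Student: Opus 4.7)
The plan is to observe that this corollary is essentially a byproduct of Theorem \ref{POL-STAB-TEO-CATTA} combined with what was already established in \cite{SARE}. The parenthetical \emph{if} direction is precisely the second bullet recalled just before the theorem: when $\chi_\varsigma\chi_\tau=0$, the semigroup $T(t)$ is exponentially stable. So the only content I need to supply is the \emph{only if} direction, and this is a one-line consequence of the optimality estimate \eqref{polycattaottimal}.

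More precisely, I would argue by contrapositive. Assume $\chi_\varsigma\chi_\tau\neq0$. By Theorem \ref{POL-STAB-TEO-CATTA} we have
$$
\limsup_{t\to\infty} \sqrt{t}\,\|T(t)\mathsf{B}^{-1}\| > 0.
$$
Suppose, towards a contradiction, that $T(t)$ is exponentially stable, so that
$$
\|T(t)\| \leq K\e^{-\omega t},\quad \forall t\geq0,
$$
for some $\omega>0$ and $K\geq 1$. Since $\i\R\subset \varrho(\mathsf{B})$ (the first bullet before the theorem), in particular $0\in\varrho(\mathsf{B})$ and $\mathsf{B}^{-1}$ is a bounded operator on $\V$. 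Hence
$$
\sqrt{t}\,\|T(t)\mathsf{B}^{-1}\| \leq K\|\mathsf{B}^{-1}\|\,\sqrt{t}\,\e^{-\omega t} \longrightarrow 0 \quad \text{as } t\to\infty,
$$
contradicting the preceding limsup. Therefore, whenever $\chi_\varsigma\chi_\tau\neq0$, the semigroup $T(t)$ fails to be exponentially stable, which is the desired implication.

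There is no serious obstacle here: the whole corollary is a bookkeeping statement that collects the sufficiency result of \cite{SARE} together with the newly proved lower bound \eqref{polycattaottimal}. All the mathematical work has been pushed into Theorem \ref{POL-STAB-TEO-CATTA}, whose proof of optimality (via sharp lower resolvent estimates and the quantified Riemann--Lebesgue lemma of \cite{DLP}) is where the genuine effort lies. Once \eqref{polycattaottimal} is in hand, together with the boundedness of $\mathsf{B}^{-1}$ ensured by $0\in\varrho(\mathsf{B})$, the corollary is immediate.
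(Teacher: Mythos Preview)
Your proposal is correct and mirrors exactly the paper's approach: the corollary is stated without proof precisely because it follows immediately from the sufficiency result of \cite{SARE} together with the optimality estimate \eqref{polycattaottimal}, via the contradiction argument you wrote out. The only point worth noting is that the paper makes the same observation explicitly for the BGP analogue just before its Corollary~4.5, so your reasoning is entirely in line with the intended logic.
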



\section{Rigorous Statements for the TGP System}
\label{rigsetTIMGP}

\noindent
First, we consider the product space
$$
\Z = H_0^1 \times L^2 \times H_*^1 \times L_*^2
\times L^2 \times \M
$$
equipped with the norm (equivalent to the standard product norm)
\begin{align*}
\|z\|_\Z^2
&= k\|\varphi_x+\psi\|^2 + \rho_1\|\Phi\|^2 +b\|\psi_x\|^2+\rho_2\|\Psi\|^2 +
\rho_3\|\vartheta\|^2 + \varpi\|\eta\|^2_\M
\end{align*}
for every
$z=(\varphi,\Phi,\psi,\Psi,\vartheta,\eta)\in \Z$.
The memory space $\M$ and its norm $\|\cdot\|_{\M}$ have been defined in Subsection \ref{memspace}.
The inner product associated to $\|\cdot\|_\Z$ will be denoted by $\l \cdot, \cdot \r_\Z$.
Next, as in the BGP system, we
introduce the auxiliary variable
$$
\eta^t(x,s)=\int_{0}^s \vartheta(x,t-r)\d r,\quad\, s>0,
$$
and we rewrite the TGP system \eqref{TIMGP} in the form
\begin{equation}
\label{tim-rew}
\begin{cases}
\rho_1 \varphi_{tt} -k(\varphi_x +\psi)_x = 0,\\\noalign{\vskip2mm}
\rho_2 \psi_{tt} -b\psi_{xx} +k(\varphi_x +\psi) +\gamma\vartheta_x= 0,\\\noalign{\vskip0.7mm}
\displaystyle
\rho_3\vartheta_t -\varpi\int_0^\infty \mu (s)\eta_{xx}(s)\d s + \gamma \psi_{xt}=0,\\
\eta_t= T \eta + \vartheta.
\end{cases}
\end{equation}
The memory kernel $\mu$ has been defined in Subsection \ref{assmemker} and fulfills the properties stated therein
(in particular, $\mu$ is bounded about zero and complies with \eqref{assnucleo1}),
while the operator $T$ has been defined in Subsection \ref{memspace}.
As customary, we view \eqref{tim-rew} as the abstract ODE on $\Z$
$$
\ddt z(t) = \mathsf{C} z(t),
$$
where the linear operator $\mathsf{C}$ reads
$$
\mathsf{C}
\left(\begin{matrix}
\varphi\\\noalign{\vskip.7mm}
\Phi\\
\psi\\\noalign{\vskip.5mm}
\Psi\\\noalign{\vskip.5mm}
\vartheta\\\noalign{\vskip.5mm}
\eta
\end{matrix}
\right)
=\left(
\begin{matrix}
\Phi\\
\frac{k}{\rho_1} (\varphi_x + \psi)_x\\
\Psi\\
\frac{b}{\rho_2}\psi_{xx} - \frac{k}{\rho_2}(\varphi_x+\psi) - \frac{\gamma}{\rho_2}\vartheta_x\\
\noalign{\vskip1.5mm}
\frac{\varpi}{\rho_3}\int_0^\infty \mu(s) \eta_{xx}(s) \d s - \frac{\gamma}{\rho_3}\Psi_x\\
\noalign{\vskip.5mm}
T\eta + \vartheta
\end{matrix}
\right)
$$
with domain
$$
\D(\mathsf{C}) =\left\{z\in\Z\left|\,\,
\begin{matrix}
\varphi\in H^2\\
\Phi,\psi_x\in H_0^1\\
\Psi \in H^1\\
\vartheta \in H_0^1\\
\int_0^\infty \mu(s)\eta(s)\d s \in H^2\\
\eta \in \D(T)
\end{matrix}\right.
\right\}.
$$
According to \cite[Theorem 3]{TIM}, the operator $\mathsf{C}$ generates a contraction semigroup
$$U(t)=\e^{t \mathsf{C}}:\Z\to\Z.$$
As mentioned in the Introduction, such a semigroup
is exponentially stable if and only if $\chi_g=0$. Our result reads as follows.

\begin{theorem}
\label{POL-STAB-TEO-TIM}
The semigroup $U(t)$ is polynomially semiuniformly stable with decay rate $\sqrt{t}$, namely,
$0\in\varrho(\mathsf{C})$ and
there exists a structural constant $K>0$ such that
\begin{equation}
\label{polytimott}
\|U(t)\mathsf{C}^{-1}\| \leq \frac{K}{\sqrt{t}}, \quad\, \forall t>0.
\end{equation}
If in addition $\chi_g\neq0$, then such a decay rate is optimal, namely
\begin{equation}
\label{polytimottimal}
\limsup_{t\to\infty} \sqrt{t}\, \|U(t)\mathsf{C}^{-1}\| >0.
\end{equation}
\end{theorem}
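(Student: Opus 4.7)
The plan is to apply the Borichev--Tomilov theorem, which reduces the decay rate $1/\sqrt{t}$ of $U(t)\mathsf{C}^{-1}$ to the two conditions $\i\R\subset\varrho(\mathsf{C})$ and $\|(\i\lambda - \mathsf{C})^{-1}\| = O(\lambda^2)$ as $|\lambda|\to\infty$, together with $0\in\varrho(\mathsf{C})$. For the optimality statement, I would establish a matching lower resolvent bound along a suitable sequence $\lambda_n\to\infty$, using the quantified Riemann--Lebesgue lemma of \cite{DLP} to turn the non-vanishing of $\chi_g$ into an effective lower bound on the inverse resolvent.

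First I would check $0\in\varrho(\mathsf{C})$ by solving $\mathsf{C}z=f$ directly: read off $\Phi,\Psi$ from the first and third components, recover $\eta$ from the Dafermos equation $T\eta = f_\eta - \vartheta$ by integration along characteristics, and determine $\vartheta,\psi,\varphi$ from the resulting coupled elliptic problem, which is solvable by Lax--Milgram. For $\lambda\in\R\setminus\{0\}$, injectivity of $\i\lambda - \mathsf{C}$ follows from the dissipation identity analogous to \eqref{dissip}: taking the real part of $\l (\i\lambda - \mathsf{C})z,z\r_\Z = 0$ forces $\Gamma[\eta]=0$, hence $\eta\equiv 0$ (using that $-\mu'$ dominates $\mu$ via \eqref{assnucleo1}); the memory equation then yields $\vartheta=0$, and the remaining purely conservative Timoshenko-type system collapses to $z=0$. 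Surjectivity is obtained by a Fredholm-type perturbation argument starting from the invertibility of $\mathsf{C}$, with extra care because of the structural lack of compactness in $\M$.

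Next I would establish the upper resolvent bound $\|(\i\lambda - \mathsf{C})^{-1}\|=O(\lambda^2)$ by contradiction: assume a sequence $\lambda_n\in\R$ with $|\lambda_n|\to\infty$ and $z_n\in\D(\mathsf{C})$ with $\|z_n\|_\Z=1$ and $\lambda_n^2 (\i\lambda_n - \mathsf{C})z_n \to 0$ in $\Z$. The dissipation identity yields $\lambda_n^2\Gamma[\eta_n]\to 0$; feeding this into $(\i\lambda_n - T)\eta_n = \vartheta_n + o(\lambda_n^{-2})$ gives quantitative control of the memory integral $\int_0^\infty\mu(s)\eta_n(s)\,\d s$ and of $\vartheta_n$. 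The coupling $\gamma\psi_{xt}$ in the heat equation then propagates the dissipation first to $\Psi_{n,x}$, then to $\psi_{n,x}$ and to $k(\varphi_{n,x}+\psi_n)$, and finally to $\Phi_n$, yielding $\|z_n\|_\Z\to 0$, a contradiction. The gain of two powers of $\lambda_n$ is essential: it absorbs the losses at each step of the bootstrap that would be eliminated by the cancellation $\chi_g=0$ characterising the exponential case.

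Finally, to prove optimality when $\chi_g\neq 0$, I would exhibit explicit approximating sequences supported on the sinusoidal basis diagonalising the Laplacian with the Dirichlet/Neumann boundary conditions. Writing the candidate in Fourier form with frequencies $\lambda_n$ tuned to a resonant wave speed, the resolvent equation reduces to a finite-dimensional linear system whose determinant depends on the Fourier--Laplace transform $\widehat g(\i\lambda_n)$. By the quantitative Riemann--Lebesgue lemma of \cite{DLP}, $\widehat g(\i\lambda)$ decays precisely like $1/\lambda$, and a direct computation of the dominant term in the determinant shows that its coefficient is proportional to $\chi_g$; non-vanishing of $\chi_g$ therefore produces a lower bound of order $\lambda_n^2$ on $\|(\i\lambda_n - \mathsf{C})^{-1}\|$, from which \eqref{polytimottimal} follows via Borichev--Tomilov. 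The main obstacle is this last step: identifying the correct Fourier ansatz so that $\chi_g$ emerges as the leading coefficient, and matching the asymptotic expansion exactly with the sharp Riemann--Lebesgue decay rate, since a naive estimate $|\widehat g(\i\lambda)|\to 0$ is insufficient to pin down the optimal exponent.
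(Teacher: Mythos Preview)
Your overall strategy matches the paper's: Borichev--Tomilov for \eqref{polytimott}, and an explicit sinusoidal ansatz combined with the quantitative Riemann--Lebesgue lemma for the lower resolvent bound. Two methodological differences are worth noting. First, for the upper bound the paper does not run a contradiction argument with normalized sequences; instead it proves \emph{direct} multiplier estimates (the $l=0$ specialization of Lemmas~\ref{THETA}, \ref{psi}--\ref{PSI}, \ref{phiA}(i) and \ref{PHI}) yielding $\|z\|_\Z \leq K|\lambda|^2\|\widehat z\|_\Z$ for $|\lambda|\geq 1$, and these same estimates, valid for all $\lambda\neq 0$, also establish $\i\R\setminus\{0\}\subset\varrho(\mathsf{C})$ via the approximate-eigenvalue characterization of the boundary spectrum of a contraction generator. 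Your Fredholm-perturbation route to surjectivity is delicate precisely because $\mathsf{C}^{-1}$ is not compact; the approximate-eigenvalue argument sidesteps this entirely.

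One small gap: you cannot conclude \eqref{polytimottimal} from Borichev--Tomilov alone. That theorem is an equivalence of big-$O$ conditions for each fixed exponent $\alpha$, and $\limsup_{t\to\infty}\sqrt{t}\,\|U(t)\mathsf{C}^{-1}\|=0$ does \emph{not} imply $\|U(t)\mathsf{C}^{-1}\|=O(t^{-1/\alpha})$ for some $\alpha<2$. The paper closes this by invoking the Batty--Duyckaerts theorem: any majorant $d(t)=o(t^{-1/2})$ of $\|U(t)\mathsf{C}^{-1}\|$ forces $\|(\i\lambda-\mathsf{C})^{-1}\|\leq C\,d^{-1}(1/(2|\lambda|))=o(|\lambda|^2)$, contradicting the lower bound $\limsup_{\lambda\to\infty}\lambda^{-2}\|(\i\lambda-\mathsf{C})^{-1}\|>0$ you correctly identify. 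Also, in the computation it is $\hat\mu(\lambda_n)$ (not $\widehat g$) that enters, and the mechanism is slightly different from what you describe: one tunes a free parameter $c_0$ in $\lambda_n^2=(k\omega_n^2-c_0)/\rho_1$ so that the leading $\omega_n^4$ term of $\det\mathbb{M}_n$ cancels, leaving a nonzero $\omega_n^3$ term proportional to $1/\chi_g$, while $\det\mathbb{A}_n$ retains an $\omega_n^4$ term proportional to $\chi_g$; it is this mismatch in order that produces the growth of $|A_n|$.
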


The remaining of the paper is devoted to the proofs of Theorems \ref{EXP-STAB-TEO}, \ref{POL-STAB-TEO},
\ref{POL-STAB-TEO-CATTA} and \ref{POL-STAB-TEO-TIM}.


\section{Upper Resolvent Estimates for the BGP System}
\label{sceres}

\noindent
In this section, we establish some upper resolvent estimates that will be the key ingredients
in order to prove Theorem~\ref{EXP-STAB-TEO} and the first part of Theorem \ref{POL-STAB-TEO}.
To this end, for every fixed $\lambda \in \R$ and
$\widehat u=(\hat \varphi,\hat \Phi,\hat \psi,\hat \Psi,\hat w,\hat W,\hat\vartheta,\hat\eta,\hat\xi,\hat\zeta)\in \H$,
we consider the resolvent equation
$$
\i\lambda u - \mathsf{A} u = \widehat u
$$
where $u = (\varphi,\Phi,\psi,\Psi,w,W,\vartheta,\eta,\xi,\zeta) \in \D(\mathsf{A})$. In the sequel,
we denote by $c>0$ a generic constant
depending only on the structural quantities
of the problem (hence independent of $\lambda$), whose value
might change even within the same line.

The first step is to estimate
the memory variables $\eta$ and $\zeta$.
Multiplying the resolvent equation by $u$ in $\H$, taking the real part and using \eqref{dissip}, we find
$$
\frac{\varpi}{2}\big[\Gamma[\eta]  + \Gamma[\zeta] \big]= \Re \l \i\lambda u - \mathsf{A} u , u\r_\H =
\Re \l \widehat u , u\r_\H.
$$
As a consequence, we get the control
\begin{equation}
\label{DISS}
\varpi\big[\Gamma[\eta]  + \Gamma[\zeta]\big] \leq 2 \|u\|_\H \| \widehat u\|_\H.
\end{equation}
Exploiting \eqref{assnucleo1} and \eqref{assnucleo2}, the inequality above yields the bounds
\begin{align}
\label{ETA}
\varpi \|\eta\|_\M^2 &\leq c\|u\|_\H \|\widehat u\|_\H,\\\noalign{\vskip1mm}
\label{ZETA}
\varpi \|\zeta \|_\N^2 &\leq c\|u\|_\H \|\widehat u\|_\H.
\end{align}
The next step is to estimate the remaining variables of $u$. To this aim,
we write the resolvent equation componentwise
\begin{align}
\label{R1}
&\i\lambda \varphi - \Phi =\hat \varphi,\\\noalign{\vskip0.7mm}
\label{R2}
&\i\lambda \rho_1 \Phi  -k(\varphi_{x} +\psi +lw)_x - l k_0(w_{x} - l\varphi)+ l\gamma \xi =\rho_1 \hat \Phi,\\\noalign{\vskip1mm}
\label{R3}
&\i\lambda \psi - \Psi =\hat \psi,\\\noalign{\vskip0.7mm}
\label{R4}
&\i\lambda\rho_2 \Psi -b\psi_{xx} +k(\varphi_{x} +\psi +l w ) +\gamma\vartheta_{x}=\rho_2 \hat \Psi,\\\noalign{\vskip2mm}
\label{R5}
&\i\lambda w - W = \hat w,\\\noalign{\vskip1mm}
\label{R6}
&\i\lambda \rho_1 W  -k_0(w_{x} - l\varphi)_x + l k(\varphi_{x} +\psi +lw) + \gamma \xi_x= \rho_1 \hat W,\\
\label{R7}
\displaystyle
&\i\lambda \rho_3 \vartheta - \varpi\int_0^\infty \mu(s)\eta_{xx}(s)\d s + \gamma \Psi_{x} = \rho_3 \hat\vartheta,\\
\label{R8}
&\i\lambda \eta - T\eta - \vartheta = \hat\eta,\\
\label{R9}
&\i\lambda \rho_3 \xi - \varpi\int_0^\infty \nu(s)\zeta_{xx}(s)\d s + \gamma (W_{x} - l \Phi) = \rho_3 \hat\xi,\\
\label{R10}
&\i\lambda \zeta - T\zeta - \xi= \hat\zeta,
\end{align}
and we establish a number of auxiliary lemmas.

\begin{lemma}
\label{THETA}
For every $\eps \in (0,1)$ the inequality
$$
\rho_3\|\vartheta\|^2 \leq \eps \|\Psi\|^2  + \frac{c}{\eps} \|u\|_\H\|\widehat u\|_\H
$$
holds for some structural constant $c>0$ independent of $\eps$ and $\lambda$.
\end{lemma}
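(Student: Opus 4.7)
The strategy is to use the memory equation \eqref{R8} to manufacture a copy of $\|\vartheta\|^2$ and then invoke \eqref{R7} to eliminate the factor $\i\lambda$ that unavoidably appears. I introduce the moment $m=\int_0^\infty\mu(s)\eta(s)\,\d s$, which by the definition of $\D(\mathsf{A})$ lies in $H^2\cap H^1_0$ and satisfies $m_{xx}=\int_0^\infty\mu(s)\eta_{xx}(s)\,\d s$. Taking the $L^2$-inner product of \eqref{R8} against $\vartheta$, multiplying by $\mu(s)$, integrating in $s$ over $\R^+$, and performing an integration by parts in $s$ (using $\eta(0)=0$ and the Dafermos-driven decay of $\mu(s)\eta(s)$ at infinity), I arrive at
\begin{equation*}
g(0)\|\vartheta\|^2=\i\lambda\langle m,\vartheta\rangle+\int_0^\infty(-\mu'(s))\langle\eta(s),\vartheta\rangle\,\d s-\int_0^\infty\mu(s)\langle\hat\eta(s),\vartheta\rangle\,\d s.
\end{equation*}

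The crucial step is trading the $\i\lambda$ factor in the first term on the right. Pairing \eqref{R7} with $m$ in $L^2$ and integrating by parts $\langle m_{xx},m\rangle=-\|m_x\|^2$ (boundary terms vanish since $m\in H^1_0$) gives $\i\lambda\rho_3\langle\vartheta,m\rangle=-\varpi\|m_x\|^2-\gamma\langle\Psi_x,m\rangle+\rho_3\langle\hat\vartheta,m\rangle$. Complex conjugation then produces a $\lambda$-free formula for $\i\lambda\langle m,\vartheta\rangle$; substituting it back and using $\langle m,\Psi_x\rangle=-\langle m_x,\Psi\rangle$ yields
\begin{equation*}
g(0)\|\vartheta\|^2=\tfrac{\varpi}{\rho_3}\|m_x\|^2-\tfrac{\gamma}{\rho_3}\Re\langle m_x,\Psi\rangle-\Re\langle m,\hat\vartheta\rangle+\Re\!\int_0^\infty(-\mu'(s))\langle\eta(s),\vartheta\rangle\,\d s-\Re\!\int_0^\infty\mu(s)\langle\hat\eta(s),\vartheta\rangle\,\d s.
\end{equation*}

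At this point every right-hand term is bounded by routine means. A pointwise Cauchy--Schwarz in the measure $\mu(s)\,\d s$ gives $\|m_x\|^2\le g(0)\|\eta\|^2_\M\le c\|u\|_\H\|\widehat u\|_\H$ through \eqref{ETA}, and Poincar\'e then supplies $\|m\|\le c\|m_x\|$. Young's inequality applied to $\langle m_x,\Psi\rangle$ produces the advertised $\eps\|\Psi\|^2+(c/\eps)\|u\|_\H\|\widehat u\|_\H$; the $\langle m,\hat\vartheta\rangle$ term is absorbed into $c\|u\|_\H\|\widehat u\|_\H$ after Young, noting that the conclusion is trivially true whenever $\|\widehat u\|_\H\ge\|u\|_\H$, so $\|\widehat u\|_\H^2\le\|u\|_\H\|\widehat u\|_\H$ may always be assumed. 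For the $(-\mu')$-weighted integral, Cauchy--Schwarz in the measure $-\mu'(s)\,\d s$, together with Poincar\'e, the finiteness of $\int_0^\infty(-\mu'(s))\,\d s=\mu(0)$, and the dissipation bound \eqref{DISS}, yields a control of the form $c\|\vartheta\|\sqrt{\|u\|_\H\|\widehat u\|_\H}$; an analogous computation in the measure $\mu(s)\,\d s$ controls the $\hat\eta$-integral by $c\|\vartheta\|\|\widehat u\|_\H$. A final Young with a small parameter absorbs the resulting $\delta\|\vartheta\|^2$ contributions into the left-hand side.

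The only delicate point I anticipate is the cancellation of $\i\lambda\langle m,\vartheta\rangle$ achieved by pairing \eqref{R7} with $m$ and conjugating; once this is in place, all remaining estimates reduce to standard applications of \eqref{ETA}, \eqref{DISS}, Poincar\'e, and Young. The assumption $\mu(0)<\infty$ from Subsection~\ref{assmemker} is essential precisely at the $(-\mu')\,\d s$ Cauchy--Schwarz step, since it is what allows the dissipation functional $\Gamma[\eta]$ to dominate a pointwise estimate involving $\|\vartheta\|$.
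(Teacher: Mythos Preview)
Your proof is correct and, modulo notation, takes the same approach as the paper: the paper works in the weighted space $\M_0=L^2_\mu(\R^+;L^2)$ instead of introducing the moment $m$, but the identity you obtain for $\i\lambda\langle m,\vartheta\rangle$ after pairing \eqref{R7} with $m$ and conjugating coincides term-by-term with the paper's rewriting of $\i\lambda\langle\eta,\vartheta\rangle_{\M_0}$. One small simplification: your conditional workaround ``assume $\|\widehat u\|_\H\le\|u\|_\H$'' is unnecessary, since the direct bounds $\|\eta\|_\M\le c\|u\|_\H$ and $\|\vartheta\|\le c\|u\|_\H$ already control $|\langle m,\hat\vartheta\rangle|\le c\|\eta\|_\M\|\widehat u\|_\H$ and the $\hat\eta$-integral by $c\|u\|_\H\|\widehat u\|_\H$ without any case distinction.
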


\begin{proof}
We introduce the space $\M_0=L^2_\mu(\R^+; L^2)$ equipped with the inner product
$$
\langle\eta_1,\eta_2\rangle_{\M_0}=\int_0^\infty \mu(s) \langle\eta_{1}(s),\eta_{2}(s)\rangle \d s.
$$
Noting that $\M \subset \M_0$ with continuous inclusion, we
multiply \eqref{R8} by $\vartheta$ in $ \M_0$ and we get
$$
g(0)\|\vartheta\|^2= \i\lambda \l\eta, \vartheta\r_{\M_0}
- \l T\eta, \vartheta\r_{\M_0}  -\l \hat \eta, \vartheta\r_{\M_0}.
$$
Exploiting \eqref{R7}, we rewrite
\begin{align*}
\i\lambda \l\eta, \vartheta\r_{\M_0}
&=\frac{\varpi}{\rho_3}\int_0^\infty \mu(s)\int_0^\infty \mu(r) \l \eta_x(s), \eta_x(r) \r \d r \d s
-\frac{\gamma}{\rho_3}\int_0^\infty \mu(s) \l \eta_x(s), \Psi \r \d s \\
& \quad - \int_0^\infty \mu(s) \l \eta(s), \hat \vartheta \r \d s.
\end{align*}
Owing to the equality above and \eqref{ETA}, we obtain the control
$$
|\i\lambda \l\eta, \vartheta\r_{\M_0} | \leq c \|\Psi\|\|\eta\|_\M + c\|u\|_\H\|\widehat u\|_\H.
$$
Moreover, integrating by parts in $s$,
we infer that
$$
\l T\eta, \vartheta\r_{\M_0} = \int_0^\infty \mu'(s)\l \eta(s), \vartheta \r \d s,
$$
where the boundary terms vanish by standard arguments (see e.g.\ \cite{Terreni}).
Hence, invoking \eqref{DISS}, we find the bound
$$
|\l T\eta, \vartheta\r_{\M_0} |
\leq c \|\vartheta\| \sqrt{\Gamma[\eta]}
\leq \frac{g(0)}{2}\|\vartheta\|^2 + c\|u\|_\H\|\widehat u\|_\H.
$$
Finally, it is apparent that
$$
|\l \hat \eta, \vartheta\r_{\M_0}|
\leq c\|u\|_\H\|\widehat u\|_\H.
$$
Collecting the estimates obtained so far and invoking \eqref{ETA}, we end up with
$$
\rho_3\|\vartheta\|^2 \leq
c \|\Psi\|\|\eta\|_\M +c\|u\|_\H\|\widehat u\|_\H \leq \eps \|\Psi\|^2  + \frac{c}{\eps} \|u\|_\H\|\widehat u\|_\H
$$
for every $\eps\in(0,1)$, as claimed.
\end{proof}

\begin{lemma}
\label{XI}
For every $\eps \in (0,1)$ the inequality
$$
\rho_3\|\xi\|^2 \leq \eps\|u\|_\H^2 + \frac{c}{\eps} \|u\|_\H\|\widehat u\|_\H
$$
holds for some structural constant $c>0$ independent of $\eps$ and $\lambda$.
\end{lemma}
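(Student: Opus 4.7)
The proof will mimic the structure of Lemma \ref{THETA} with the memory pairing computed against $\zeta$ and $\xi$ instead of $\eta$ and $\vartheta$. The only real new ingredient is that the source term in \eqref{R9} contains an extra $l\Phi$ compared with \eqref{R7}, and this is responsible for the weaker right-hand side $\eps\|u\|_\H^2$ replacing $\eps\|\Psi\|^2$.

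Concretely, I introduce the enlarged memory space $\N_0=L^2_\nu(\R^+;L^2)$ (into which $\N$ embeds continuously by Poincar\'e) and take the $\N_0$-inner product of \eqref{R10} with $\xi$. Since $\l\xi,\xi\r_{\N_0}=h(0)\|\xi\|^2$, this gives
$$
h(0)\|\xi\|^2 \;=\; \i\lambda\l\zeta,\xi\r_{\N_0}\,-\,\l T\zeta,\xi\r_{\N_0}\,-\,\l\hat\zeta,\xi\r_{\N_0}.
$$
I now control each term on the right. For the first, I substitute $\i\lambda\xi$ from \eqref{R9} (noting $\lambda\in\R$ makes $\i\lambda$ antilinear through the second slot) and integrate by parts in $x$ in the leading memory piece, yielding
$$
\i\lambda\l\zeta,\xi\r_{\N_0}
=\tfrac{\varpi}{\rho_3}\!\int_0^\infty\!\!\!\int_0^\infty\!\nu(s)\nu(r)\l\zeta_x(s),\zeta_x(r)\r\d r\d s
-\tfrac{\gamma}{\rho_3}\!\int_0^\infty\!\nu(s)\l\zeta_x(s),W\r\d s
+\tfrac{l\gamma}{\rho_3}\!\int_0^\infty\!\nu(s)\l\zeta(s),\Phi\r\d s
-\!\int_0^\infty\!\nu(s)\l\zeta(s),\hat\xi\r\d s.
$$
The double memory integral is dominated by $c\|\zeta\|_\N^2$, and \eqref{ZETA} turns it into $c\|u\|_\H\|\widehat u\|_\H$. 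The other three terms are bounded by $c\|\zeta\|_\N(\|W\|+\|\Phi\|+\|\hat\xi\|)$, again via \eqref{ZETA} and Poincar\'e. Since $\|W\|,\|\Phi\|\leq c\|u\|_\H$, an application of Young's inequality with parameter $\eps$ produces
$$
|\i\lambda\l\zeta,\xi\r_{\N_0}|\;\leq\;\eps\|u\|_\H^2+\tfrac{c}{\eps}\|u\|_\H\|\widehat u\|_\H.
$$

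For the second term, I integrate by parts in $s$ (the boundary contributions vanishing by the standard argument of \cite{Terreni}) to get $\l T\zeta,\xi\r_{\N_0}=\int_0^\infty\nu'(s)\l\zeta(s),\xi\r\d s$; this is estimated by $c\|\xi\|\sqrt{\Gamma[\zeta]}$ and then, through \eqref{DISS} and a Young inequality, by $\tfrac{h(0)}{2}\|\xi\|^2+c\|u\|_\H\|\widehat u\|_\H$. The third term is trivially $\leq c\|u\|_\H\|\widehat u\|_\H$. Absorbing $\tfrac{h(0)}{2}\|\xi\|^2$ into the left-hand side and rescaling $\eps$, the structural inequality $\rho_3\|\xi\|^2\leq\eps\|u\|_\H^2+\tfrac{c}{\eps}\|u\|_\H\|\widehat u\|_\H$ follows.

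\textbf{Main obstacle.} There is no deep difficulty beyond Lemma \ref{THETA}; the one delicate point is precisely the $l\Phi$ contribution from \eqref{R9}, whose pairing $\l\zeta(s),\Phi\r$ cannot be turned into a gradient pairing by integration by parts. Hence, unlike in Lemma \ref{THETA} where the analogous term produced only $\|\Psi\|\,\|\eta\|_\M$, we are forced to bound via the full norm $\|u\|_\H$, which explains the slightly weaker shape of the estimate.
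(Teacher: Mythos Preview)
Your proof is correct and follows essentially the same route as the paper: multiply \eqref{R10} by $\xi$ in $\N_0$, substitute \eqref{R9} into the $\i\lambda$-term, handle the $T\zeta$-term via integration by parts in $s$ and \eqref{DISS}, then close with \eqref{ZETA} and Young's inequality. The only slip is a sign in the $\Phi$-term of your expansion (it should be $-\tfrac{l\gamma}{\rho_3}\int_0^\infty\nu(s)\l\zeta(s),\Phi\r\d s$), but this is immaterial since only the modulus is used.
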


\begin{proof}
The argument is similar to the previous one.
Setting $\N_0=L^2_\nu(\R^+; L^2)$ endowed with the inner product
$$
\langle\xi_1,\xi_2\rangle_{\N_0}=\int_0^\infty \nu(s) \langle\xi_{1}(s),\xi_{2}(s)\rangle \d s,
$$
and noting that $\N \subset \N_0$ with continuous inclusion, we
multiply \eqref{R10} by $\xi$ in $ \N_0$ obtaining
$$
h(0)\|\xi\|^2= \i\lambda \l\zeta, \xi\r_{\N_0}
- \l T\zeta, \xi \r_{\N_0}  -\l \hat \zeta, \xi\r_{\N_0}.
$$
Making use of \eqref{R9}, we rewrite
\begin{align*}
\i\lambda \l\zeta, \xi\r_{\N_0}
&=\frac{\varpi}{\rho_3}\int_0^\infty \nu(s)\int_0^\infty \nu(r) \l \zeta_x(s), \zeta_x(r) \r \d r \d s
-\frac{\gamma}{\rho_3}\int_0^\infty \nu(s) \l \zeta_x(s), W \r \d s \\\noalign{\vskip0.7mm}
& \quad -\frac{l\gamma}{\rho_3}\int_0^\infty \nu(s) \l \zeta(s), \Phi \r \d s
- \int_0^\infty \nu(s) \l \zeta(s), \hat \xi \r \d s.
\end{align*}
Due to the equality above and \eqref{ZETA}, we find the estimate
$$
|\i\lambda \l\zeta, \xi\r_{\N_0}|
\leq  c \|u\|_\H\|\zeta\|_\N + c\|u\|_\H\|\widehat u\|_\H.
$$
Integrating by parts in $s$ and exploiting \eqref{DISS}, we also have
$$
|\l T\zeta, \xi \r_{\N_0}| \leq c \|\xi\| \sqrt{\Gamma[\zeta]}
\leq \frac{h(0)}{2}\|\xi\|^2 + c\|u\|_\H\|\widehat u\|_\H.
$$
Finally, it is readily seen that
$$
|\l \hat \zeta, \xi\r_{\N_0}|
\leq c\|u\|_\H\|\widehat u\|_\H.
$$
Collecting these estimates and using \eqref{ZETA}, we conclude that
$$
\rho_3\|\xi\|^2 \leq
c \|u\|_\H\|\zeta\|_\N + c\|u\|_\H\|\widehat u\|_\H \leq \eps \|u\|_\H^2
+ \frac{c}{\eps} \|u\|_\H\|\widehat u\|_\H
$$
for every $\eps\in(0,1)$. The proof is finished.
\end{proof}

\begin{lemma}
\label{psi}
For every  $\eps \in (0,1)$ and every $\lambda\neq0$ the inequality
$$
b\|\psi_x\|^2 \leq \frac{\eps}{|\lambda|^2}\|u\|_\H^2  + c\eps\|\Psi\|^2 +
\frac{c}{\eps}\bigg[\frac{1}{|\lambda|}+1\bigg]\|u\|_\H\|\widehat u\|_\H
$$
holds for some structural constant $c>0$ independent of $\eps$ and $\lambda$.
\end{lemma}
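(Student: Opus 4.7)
The plan is to extract $\|\psi_x\|^2$ from the heat balance \eqref{R7} by testing it against $\bar\psi_x$ in $L^2$. Differentiating \eqref{R3} in $x$ gives $\Psi_x=\i\lambda\psi_x-\hat\psi_x$, so the coupling term $\gamma\l\Psi_x,\psi_x\r$ in the paired equation produces $\gamma\i\lambda\|\psi_x\|^2$ up to an $\hat\psi_x$-correction. Taking imaginary parts and dividing by $\lambda\ne 0$ yields
\begin{equation*}
\gamma\|\psi_x\|^2 = -\rho_3\Re\l\vartheta,\psi_x\r + \frac{\varpi}{\lambda}\Im\int_0^\infty \mu(s)\l\eta_{xx}(s),\psi_x\r\d s + \frac{1}{\lambda}\bigl[\gamma\Im\l\hat\psi_x,\psi_x\r+\rho_3\Im\l\hat\vartheta,\psi_x\r\bigr].
\end{equation*}

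The leading term $\rho_3\Re\l\vartheta,\psi_x\r$ is immediately tamed by Young's inequality: it contributes $\tfrac\gamma2\|\psi_x\|^2$, which is absorbed on the left, plus $c\|\vartheta\|^2$, which Lemma~\ref{THETA} turns precisely into $c\eps\|\Psi\|^2+(c/\eps)\|u\|_\H\|\widehat u\|_\H$. The two datum summands carry an explicit $1/\lambda$ and are bounded by $(c/|\lambda|)\|u\|_\H\|\widehat u\|_\H$ via Cauchy--Schwarz.

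The core of the argument is the memory contribution $\frac{\varpi}{\lambda}\Im\int \mu\l\eta_{xx},\psi_x\r\d s$. Integrating by parts in $x$ (boundary terms vanish because $\psi_x(0,\cdot)=\psi_x(\ell,\cdot)=0$) recasts it as $-\int \mu\l\eta_x,\psi_{xx}\r\d s$, and one uses \eqref{R4} to replace $b\psi_{xx}$ by $\i\lambda\rho_2\Psi+k(\varphi_x+\psi+lw)+\gamma\vartheta_x-\rho_2\hat\Psi$. Three of the four resulting pieces are handled routinely through the elementary bound $\int \mu\|\eta_x\|\,\d s\le \sqrt{g(0)}\,\|\eta\|_\M$ combined with \eqref{ETA}: the $\i\lambda\rho_2\Psi$ piece produces $c\|\Psi\|\|\eta\|_\M$ (the extra $|\lambda|$ cancels the prefactor $1/|\lambda|$), which by Young gives $\eps\|\Psi\|^2+(c/\eps)\|u\|_\H\|\widehat u\|_\H$; the shear piece gives $(c/|\lambda|)\|u\|_\H\|\eta\|_\M$, and Young of the form $ab\le \eps a^2+b^2/(4\eps)$ with $a=\|u\|_\H/|\lambda|$ and $b=\sqrt{\|u\|_\H\|\widehat u\|_\H}$ delivers exactly the anticipated $(\eps/|\lambda|^2)\|u\|_\H^2+(c/\eps)\|u\|_\H\|\widehat u\|_\H$; the $\hat\Psi$ piece is absorbed into $(c/|\lambda|)\|u\|_\H\|\widehat u\|_\H$.

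The hardest piece is the $\gamma\vartheta_x$ summand, since $\|\vartheta_x\|$ is \emph{not} controlled by the phase-space norm. The trick is to transfer the $x$-derivative back, producing $-\gamma\l\omega_{xx},\vartheta\r$ with $\omega=\int \mu\eta\,\d s\in H_0^1$, and then \emph{reuse} \eqref{R7} to rewrite $\varpi\omega_{xx}=\i\lambda\rho_3\vartheta+\gamma\Psi_x-\rho_3\hat\vartheta$. The dominant $|\lambda|\|\vartheta\|^2$ loses its $|\lambda|$ against the prefactor $1/|\lambda|$ and is handled by Lemma~\ref{THETA}; the residual $\gamma\l\Psi_x,\vartheta\r/|\lambda|$ is dismantled via $\Psi_x=\i\lambda\psi_x-\hat\psi_x$, yielding a further small fraction of $\|\psi_x\|^2$ to absorb together with a Young-controlled $\|\vartheta\|^2$ term. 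Collecting all contributions, absorbing the $\|\psi_x\|^2$ pieces on the left, multiplying by $4b/\gamma$ and rescaling $\eps$ by the resulting structural constants gives the inequality as stated.
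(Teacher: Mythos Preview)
Your proof is correct and follows the same overall architecture as the paper: test \eqref{R7} against $\psi_x$, use \eqref{R3} to extract $\i\lambda\gamma\|\psi_x\|^2$, and invoke \eqref{R4} to replace the resulting $\psi_{xx}$ by controllable quantities. The one substantive difference is the treatment of the $\gamma\vartheta_x$ contribution coming out of \eqref{R4}. The paper first proves an auxiliary gradient bound
\[
\|\vartheta_x\|\le c\big[1+|\lambda|\big]\sqrt{\|u\|_\H\|\widehat u\|_\H}+c\|\widehat u\|_\H
\]
by multiplying the history equation \eqref{R8} by $\vartheta$ in $\M$, and then pairs it with \eqref{ETA} to control $\|\vartheta_x\|\|\eta\|_\M$ directly. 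You instead move the $x$-derivative onto $\omega=\int_0^\infty\mu(s)\eta(s)\d s$ and recycle \eqref{R7} to express $\varpi\omega_{xx}$ in terms of $\vartheta$, $\Psi_x$ and data, thereby never needing an $H^1$-bound on $\vartheta$. Both routes close; the paper's auxiliary estimate has the advantage of being reusable (an analogous bound on $\|\xi_x\|$ via \eqref{R10} reappears in the proof of Lemma~\ref{w}), while your route stays self-contained at the price of invoking \eqref{R7} twice. A small cosmetic point: for the $\hat\Psi$-piece it is cleaner to bound $\|\eta\|_\M\le c\|u\|_\H$ directly rather than going through \eqref{ETA}, which avoids the stray $\|\widehat u\|_\H^{3/2}$ factor.
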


\begin{proof}
We preliminary show
\begin{equation}
\label{Theta-aux}
\|\vartheta_x\| \leq c \big[1+|\lambda|\big] \sqrt{\|u\|_\H \|\widehat u\|_\H} + c\| \hat u\|_\H.
\end{equation}
To this end, multiplying \eqref{R8} by $\vartheta$ in $ \M$, we get
$$
g(0)\|\vartheta_x\|^2= \i\lambda \l\eta, \vartheta\r_{\M}
- \l T\eta, \vartheta\r_{\M}  -\l \hat \eta, \vartheta\r_{\M}.
$$
In the light of \eqref{ETA}, we estimate
$$
|\i\lambda \l\eta, \vartheta\r_{\M} | \leq c |\lambda| \|\vartheta_x\| \sqrt{\|u\|_\H \|\widehat u\|_\H},
$$
while integrating by parts in $s$ and exploiting \eqref{DISS} we infer that
$$
|\l T\eta, \vartheta\r_{\M} |
\leq c \|\vartheta_x\| \sqrt{\Gamma[\eta]}
\leq c \|\vartheta_x\| \sqrt{\|u\|_\H \|\widehat u\|_\H}.
$$
Finally, noting that
$$
|\l \hat \eta, \vartheta\r_{\M}| \leq c \|\vartheta_x\| \|\widehat u\|_\H,
$$
we arrive at \eqref{Theta-aux}.
Next, substituting \eqref{R3} into \eqref{R7}, we find the identity
$$
\i \lambda \gamma \psi_x = \varpi\int_0^\infty \mu(s)\eta_{xx}(s)\d s -\i\lambda \rho_3 \vartheta + \gamma \hat \psi_x
+\rho_3 \hat\vartheta.
$$
A multiplication by $\psi_x$ in $L^2$ entails
\begin{equation}
\label{gammo}
\i \lambda \gamma \|\psi_x\|^2 = -\varpi\int_0^\infty \mu(s)\l \eta_{x}(s), \psi_{xx}\r \d s
-\i\lambda \rho_3 \l  \vartheta , \psi_x \r +\gamma \l \hat \psi_x, \psi_x\r
+\rho_3 \l \hat\vartheta , \psi_x \r.
\end{equation}
With the aid of \eqref{R4}, we rewrite the first term in the right-hand side as
\begin{align*}
\int_0^\infty \mu(s)\l \eta_{x}(s), \psi_{xx}\r \d s  &=
\frac{\gamma}{b}\int_0^\infty \mu(s)\l \eta_{x}(s), \vartheta_{x}\r \d s
-\frac{\i\lambda\rho_2}{b}\int_0^\infty \mu(s)\l \eta_{x}(s), \Psi\r \d s\\
&\,\,\,\,+\frac{ k}{b}\int_0^\infty \mu(s)\l \eta_{x}(s), \varphi_x+\psi + lw \r \d s
-\frac{\rho_2}{b}\int_0^\infty \mu(s)\l \eta_{x}(s), \hat \Psi\r \d s.
\end{align*}
Due to the equality above, together with \eqref{ETA} and \eqref{Theta-aux}, we derive the bound
\begin{align*}
\Big|\int_0^\infty \mu(s)\l \eta_{x}(s), \psi_{xx}\r \d s \Big| &\leq
c \big[\|u\|_\H + |\lambda|\|\Psi\|\big]\|\eta\|_\M + c\|\vartheta_x\|\|\eta\|_\M
+ c\|u\|_\H\|\widehat u\|_\H \\
&\leq
c \big[\|u\|_\H + |\lambda|\|\Psi\|\big]\|\eta\|_\M
+ c\big[1+|\lambda|\big]\|u\|_\H\|\widehat u\|_\H.
\end{align*}
As a consequence, from \eqref{gammo} we infer that
$$
|\lambda| \|\psi_x\|^2
\leq c \big[\|u\|_\H + |\lambda|\|\Psi\|\big]\|\eta\|_\M
+ c |\lambda|\|\vartheta\|\|\psi_x\| +
c\big[1+|\lambda|\big]\|u\|_\H\|\widehat u\|_\H.
$$
Invoking \eqref{ETA} once more and using Lemma \ref{THETA},
we arrive at
\begin{align*}
2b\|\psi_x\|^2 &\leq \frac{c}{|\lambda|}\|u\|_\H\|\eta\|_\M + c\|\Psi\|\|\eta\|_\M +
c\|\vartheta\|\|\psi_x\| + c\bigg[\frac{1}{|\lambda|}+1\bigg]\|u\|_\H\|\widehat u\|_\H\\
&\leq\frac{\eps}{|\lambda|^2}\|u\|_\H^2  + c\eps\|\Psi\|^2 + b\|\psi_x\|^2
+\frac{c}{\eps}\bigg[\frac{1}{|\lambda|}+1\bigg]\|u\|_\H\|\widehat u\|_\H
\end{align*}
for every $\eps \in (0,1)$ and every $\lambda\neq0$. The conclusion follows.
\end{proof}

\begin{lemma}
\label{PSI}
For every $\eps>0$ small enough and every $\lambda\neq0$ the inequality
$$
\rho_2\|\Psi\|^2 \leq \frac{c\eps}{|\lambda|^2}\|u\|_\H^2
+\frac{c}{\eps^3}\bigg[\frac{1}{|\lambda|}+1\bigg]\|u\|_\H\|\widehat u\|_\H
$$
holds for some structural constant $c>0$ independent of $\eps$ and $\lambda$.
\end{lemma}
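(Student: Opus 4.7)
The plan is to obtain an identity for $\rho_2\|\Psi\|^2$ by testing \eqref{R4} against $\bar\psi$, integrating over $(0,\ell)$, and using the Neumann boundary condition to write $-\l\psi_{xx},\psi\r = \|\psi_x\|^2$. The resulting identity contains the awkward term $\i\lambda\rho_2\l\Psi,\psi\r$, which I would rewrite using \eqref{R3}: from $\Psi=\i\lambda\psi-\hat\psi$ one finds $\|\Psi\|^2 = \i\lambda\l\psi,\Psi\r - \l\hat\psi,\Psi\r$, so that taking conjugates yields $\i\lambda\l\Psi,\psi\r = -\|\Psi\|^2 - \l\Psi,\hat\psi\r$. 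Substituting back one obtains the identity
\begin{equation*}
\rho_2\|\Psi\|^2 = b\|\psi_x\|^2 + k\l\varphi_x+\psi+lw,\psi\r + \gamma\l\vartheta_x,\psi\r - \rho_2\l\hat\Psi,\psi\r - \rho_2\l\Psi,\hat\psi\r,
\end{equation*}
on which the whole proof rests.

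Next I would simplify the right-hand side by integration by parts, converting $\l\varphi_x,\psi\r$ into $-\l\varphi,\psi_x\r$ (Dirichlet BC on $\varphi$) and $\l\vartheta_x,\psi\r$ into $-\l\vartheta,\psi_x\r$ (Dirichlet BC on $\vartheta$). Then I would invoke \eqref{R1} and \eqref{R5} to substitute $\varphi = (\Phi+\hat\varphi)/(\i\lambda)$ and $w = (W+\hat w)/(\i\lambda)$, thereby extracting the crucial $1/|\lambda|$ prefactor from the $\varphi$ and $w$ contributions. After also using Poincar\'e's inequality (valid because $\psi\in H^1_*$) to bound $\|\psi\|\le c\|\psi_x\|$, every ``bulk'' term of the right-hand side reduces, modulo cross terms of type $c\|u\|_\H\|\widehat u\|_\H$, to one of $b\|\psi_x\|^2$, $\frac{c}{|\lambda|}\|u\|_\H\|\psi_x\|$, $\frac{c}{|\lambda|}\|\widehat u\|_\H\|\psi_x\|$, or $c\|\vartheta\|\,\|\psi_x\|$. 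The third of these is immediately dominated by $\frac{c}{|\lambda|}\|u\|_\H\|\widehat u\|_\H$ via the trivial bound $\|\psi_x\|\le c\|u\|_\H$.

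The heart of the argument is how Lemma~\ref{psi} is deployed. Applied directly to $b\|\psi_x\|^2$ with parameter $\eps$, it yields the correct $\frac{\eps}{|\lambda|^2}\|u\|_\H^2$ plus an absorbable $c\eps\|\Psi\|^2$. For a term of the form $\frac{c}{|\lambda|}\|u\|_\H\|\psi_x\|$ a naive Young step would produce the forbidden $\frac{c}{\eps|\lambda|^2}\|u\|_\H^2$, so a two-scale argument is needed: first I would write
\begin{equation*}
\frac{c}{|\lambda|}\|u\|_\H\|\psi_x\| \le \frac{c\eps}{|\lambda|^2}\|u\|_\H^2 + \frac{c}{\eps}\|\psi_x\|^2,
\end{equation*}
and then feed Lemma~\ref{psi} to the residual $\|\psi_x\|^2$ with parameter $\eps^2$ (rather than $\eps$). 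The $\eps^2$ cancels the $1/\eps$ to leave exactly $\frac{c\eps}{|\lambda|^2}\|u\|_\H^2$, while the small term from Lemma~\ref{psi}, which scales like $1/\eps^2$, produces after the additional factor $1/\eps$ precisely the $\eps^{-3}$ prefactor on $\|u\|_\H\|\widehat u\|_\H$ claimed in the statement. The term $c\|\vartheta\|\,\|\psi_x\|$ is split by Young and treated analogously, with Lemma~\ref{THETA} controlling $\|\vartheta\|^2$.

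The only real obstacle is this bookkeeping of the $\eps$-powers: the gain comes from exploiting the asymmetric structure of Lemma~\ref{psi}, whose $\|u\|_\H^2/|\lambda|^2$ coefficient is already of order $\eps$ rather than $1/\eps$, and matching it to a two-scale Young step. Once every contribution is collected, the $O(\eps)\|\Psi\|^2$ pieces are absorbed on the left provided $\eps$ is sufficiently small, and the claimed inequality follows.
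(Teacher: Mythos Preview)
Your argument is correct, but it proceeds along a different line from the paper's. The paper does \emph{not} test \eqref{R4} against $\psi$. Instead it introduces the primitive $P_\Psi(x)=\int_0^x\Psi(y)\,\d y\in H_0^1$ and multiplies the temperature equation \eqref{R7} by $P_\Psi$; the coupling term $\gamma\Psi_x$ in \eqref{R7} then produces $\gamma\|\Psi\|^2$ directly after one integration by parts. The dangerous term $\i\lambda\rho_3\l\vartheta,P_\Psi\r$ is handled by integrating \eqref{R4} over $(0,x)$, which gives an explicit formula for $\i\lambda P_\Psi$ in terms of $\psi_x$, $\varphi$, $\vartheta$ and lower-order pieces, so that the factor $\i\lambda$ is eaten without ever invoking a two-scale Young step. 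Your route---testing \eqref{R4} against $\psi$ and using \eqref{R3} to convert $\i\lambda\l\Psi,\psi\r$ into $-\|\Psi\|^2$---is more elementary (no primitive trick) but forces you to extract the $1/|\lambda|$ factor by hand via \eqref{R1}, \eqref{R5}, and the two-scale application of Lemma~\ref{psi} with parameter $\eps^2$. The bookkeeping you outline is accurate and the final $\eps^{-3}$ matches the paper's; both approaches lead to the same estimate, with the paper's being slightly slicker in the $\eps$-tracking and yours being arguably more transparent about where the $\|\Psi\|^2$ comes from.
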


\begin{proof}
Introducing the primitive
$$P_\Psi(x)=\int_0^x {\Psi} (y) \d y \in H_0^1$$
and multiplying \eqref{R7} by $P_\Psi$ in $L^2$, we find
\begin{equation}
\label{firslemPsi}
\gamma \|\Psi\|^2 = \i\lambda \rho_3 \l \vartheta, P_\Psi \r
+ \varpi \l \eta, P_\Psi \r_\M - \rho_3 \l \hat \vartheta, P_\Psi\r.
\end{equation}
Moreover, an integration of \eqref{R4} on $(0,x)$ entails
$$
\i\lambda P_\Psi(x) = \frac{b}{\rho_2}{\psi}_x(x) - \frac{k}{\rho_2}{\varphi}(x)
-\frac{\gamma}{\rho_2} {\vartheta}(x)
- \frac{k}{\rho_2}\int_0^x [{\psi}(y) + lw(y)] \d y
 + \int_0^x  {\hat \Psi} (y) \d y.
$$
Using the identity above, together with \eqref{R1} and \eqref{R5},
we estimate the first term in the right-hand
side of \eqref{firslemPsi} as
\begin{align*}
|\i\lambda \rho_3 \l \vartheta, P_\Psi \r| &\leq c\big[\|\varphi\|+\|w\|\big]\|\vartheta\|
+ c\|\psi_x\|\|\vartheta\| + c\|\vartheta\|^2 + c\|u\|_\H\|\widehat u\|_\H\\\noalign{\vskip1.5mm}
& \leq \frac{c}{|\lambda|}\big[\|\Phi\|+\|W\|\big]\|\vartheta\|
+c\|\psi_x\|\|\vartheta\| + c\|\vartheta\|^2+c\bigg[\frac{1}{|\lambda|}+1\bigg]\|u\|_\H\|\widehat u\|_\H\\
& \leq \frac{c}{|\lambda|}\|u\|_\H\|\vartheta\|
+ c\|\psi_x\|^2 + c\|\vartheta\|^2 + c\bigg[\frac{1}{|\lambda|}+1\bigg]\|u\|_\H\|\widehat u\|_\H,
\end{align*}
for every $\lambda\neq0$.
In the light of \eqref{ETA},
the remaining terms in the right-hand
side of \eqref{firslemPsi} can be controlled as
$$
|\varpi \l \eta, P_\Psi \r_\M - \rho_3 \l \hat \vartheta, P_\Psi\r|
\leq c\|\Psi\|\|\eta\|_\M + c\|u\|_\H\|\widehat u\|_\H \leq \frac{\gamma}{2}\|\Psi\|^2 + c\|u\|_\H\|\widehat u\|_\H.
$$
Thus, invoking Lemmas \ref{THETA} and \ref{psi}, we get
\begin{align*}
2\rho_2\|\Psi\|^2 & \leq \frac{c}{|\lambda|}\|u\|_\H\|\vartheta\|
+ c\|\psi_x\|^2+ c\|\vartheta\|^2 + c\bigg[\frac{1}{|\lambda|}+1\bigg]\|u\|_\H\|\widehat u\|_\H\\
&\leq \frac{\eps}{|\lambda|^2}\|u\|_\H^2
+ c\|\psi_x\|^2 + \frac{c}{\eps}\|\vartheta\|^2
+c\bigg[\frac{1}{|\lambda|}+1\bigg]\|u\|_\H\|\widehat u\|_\H\\\noalign{\vskip0.7mm}
&\leq \frac{c\eps}{|\lambda|^2}\|u\|_\H^2 + \rho_2\|\Psi\|^2
+ \frac{c}{\eps^3}\bigg[\frac{1}{|\lambda|}+1\bigg]\|u\|_\H\|\widehat u\|_\H
\end{align*}
for every $\eps>0$ small enough and every $\lambda\neq0$ .
The thesis has been reached.
\end{proof}

\begin{lemma}
\label{w}
For every $\eps\in(0,1)$ and every $\lambda\neq0$ the inequality
$$
k_0\|w_x - l\varphi\|^2 \leq c\eps\bigg[\frac{1}{|\lambda|^2}+1\bigg]\|u\|_\H^2
+\frac{c}{\eps}\bigg[\frac{1}{|\lambda|}+1\bigg]\|u\|_\H\|\widehat u\|_\H
$$
holds for some structural constant $c>0$ independent of $\eps$ and $\lambda$.
\end{lemma}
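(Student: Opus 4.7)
The strategy closely parallels the proof of Lemma~\ref{psi}, with equation \eqref{R6} playing the role of \eqref{R4} and equation \eqref{R9} playing the role of \eqref{R7}. Setting $\theta = w_x - l\varphi$ and $\hat\theta = \hat w_x - l\hat\varphi$, the crucial observation is that $\theta \in H_0^1$ whenever $u\in\D(\mathsf{A})$, since both $w_x$ and $\varphi$ lie in $H_0^1$; this regularity will make the subsequent integration by parts against the memory kernel licit.

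As a preparatory step, the plan is to first establish an $\|\xi_x\|$-estimate analogous to \eqref{Theta-aux}, namely
$$
\|\xi_x\| \leq c\big[1+|\lambda|\big]\sqrt{\|u\|_\H\|\widehat u\|_\H} + c\|\widehat u\|_\H,
$$
obtained by multiplying \eqref{R10} by $\xi$ in $\N$ and invoking \eqref{ZETA} together with \eqref{DISS}. Then, substituting $\Phi = \i\lambda\varphi - \hat\varphi$ from \eqref{R1} and $W = \i\lambda w - \hat w$ from \eqref{R5} into \eqref{R9}, and using the resulting identity $W_x - l\Phi = \i\lambda\theta - \hat\theta$, one rewrites the resolvent equation in the compact form
$$
\i\lambda\gamma\theta = \varpi\int_0^\infty \nu(s)\zeta_{xx}(s)\,\d s - \i\lambda\rho_3 \xi + \gamma\hat\theta + \rho_3\hat\xi.
$$

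A multiplication of this identity by $\theta$ in $L^2$, followed by an integration by parts in the memory term (admissible since $\theta \in H_0^1$), produces
$$
\i\lambda\gamma\|\theta\|^2 = -\varpi\int_0^\infty \nu(s)\l\zeta_x(s),\theta_x\r\,\d s - \i\lambda\rho_3\l\xi,\theta\r + \gamma\l\hat\theta,\theta\r + \rho_3\l\hat\xi,\theta\r.
$$
To handle the memory integral, the plan is to express $\theta_x$ through \eqref{R6} as
$$
k_0\theta_x = \i\lambda\rho_1 W + lk(\varphi_x+\psi+lw) + \gamma\xi_x - \rho_1\hat W,
$$
and estimate each resulting piece by means of \eqref{ZETA}, the preparatory bound on $\|\xi_x\|$, and the structural control $\|W\|\leq c\|u\|_\H$; the coupling term $\i\lambda\rho_3\l\xi,\theta\r$ is reduced via Lemma~\ref{XI}. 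After dividing by $|\lambda|$ and applying Young's inequality, absorbing a small multiple of $\|\theta\|^2$ into the left-hand side should yield the claimed inequality.

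The main obstacle I anticipate is the careful bookkeeping of the Young-inequality splittings: the factor $1/|\lambda|^2$ in front of $\|u\|_\H^2$ and the factor $1/|\lambda|$ in front of $\|u\|_\H\|\widehat u\|_\H$ must both emerge from the division by $|\lambda|$ combined with the $|\lambda|$-weighted contribution $\i\lambda\rho_1 W$ inside the memory integral, without degrading the $1/\eps$ scaling announced for the data term and without introducing a $\|\widehat u\|_\H^2$ contribution.
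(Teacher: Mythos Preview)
Your proposal is correct and follows essentially the same route as the paper's own proof: the preliminary bound on $\|\xi_x\|$ via \eqref{R10}, the substitution $W_x-l\Phi=\i\lambda(w_x-l\varphi)-(\hat w_x-l\hat\varphi)$ from \eqref{R1} and \eqref{R5} into \eqref{R9}, the multiplication by $w_x-l\varphi$, the use of \eqref{R6} to rewrite $(w_x-l\varphi)_x$ inside the memory integral, and the final application of \eqref{ZETA} and Lemma~\ref{XI} together with Young's inequality are exactly the steps the paper carries out. Your anticipated bookkeeping concern is handled just as you describe, and no $\|\widehat u\|_\H^2$ term survives (any stray $\|\widehat u\|_\H^{3/2}\|u\|_\H^{1/2}$ contribution is harmless since the estimate is trivial when $\|\widehat u\|_\H\geq\|u\|_\H$).
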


\begin{proof}
We preliminary show
\begin{equation}
\label{xi-aux}
\|\xi_x\| \leq c \big[1+|\lambda|\big] \sqrt{\|u\|_\H \|\widehat u\|_\H} + c\| \hat u\|_\H.
\end{equation}
To this aim, multiplying \eqref{R10} by $\xi$ in $ \N$, we get
$$
h(0)\|\xi_x\|^2= \i\lambda \l\zeta, \xi\r_{\N}
- \l T\zeta, \xi\r_{\N}  -\l \hat \zeta, \xi\r_{\N}.
$$
Making use of \eqref{ZETA}, it is readily seen that
$$
|\i\lambda \l\zeta, \xi\r_{\N} | \leq c |\lambda| \|\xi_x\| \sqrt{\|u\|_\H \|\widehat u\|_\H},
$$
while integrating by parts in $s$ and owing to \eqref{DISS} we have
$$
|\l T\zeta, \xi\r_{\N}|
\leq c \|\xi_x\| \sqrt{\Gamma[\zeta]}
\leq c \|\xi_x\| \sqrt{\|u\|_\H \|\widehat u\|_\H}.
$$
Finally, we estimate
$$
|\l \hat \zeta, \xi\r_{\N}| \leq c \|\xi_x\| \|\widehat u\|_\H,
$$
and \eqref{xi-aux} follows. Next,
invoking \eqref{R1} and \eqref{R5}, we infer that
$$
W_x - l \Phi = \i\lambda(w_x -l\varphi) -\hat w_x + l \hat \varphi,
$$
and plugging such an equality into \eqref{R9} we obtain
$$
\i \lambda \gamma (w_x -l\varphi) = \varpi\int_0^\infty \nu(s)\zeta_{xx}(s)\d s -\i\lambda \rho_3 \xi
+\gamma ( \hat w_{x} - l \hat \varphi)
+ \rho_3 \hat \xi.
$$
Multiplying the identity above by $w_x -l\varphi$ in $L^2$, we are led to
\begin{align}
\label{gammol}
\i \lambda \gamma \|w_x -l\varphi\|^2 &= -\varpi\int_0^\infty \nu(s)\l \zeta_{x}(s),(w_x -l\varphi)_{x}\r \d s
-\i\lambda \rho_3 \l  \xi, w_x -l\varphi \r \\\nonumber
&\quad\, +\gamma \l\hat w_{x} - l \hat \varphi, w_x -l\varphi\r
+ \rho_3 \l \hat \xi, w_x -l\varphi\r.
\end{align}
Exploiting \eqref{R6}, we rewrite the first term in the right-hand side as
\begin{align*}
\int_0^\infty \nu(s)\l \zeta_{x}(s),(w_x -l\varphi)_{x}\r \d s  &=
\frac{\gamma}{k_0}\int_0^\infty \nu(s)\l \zeta_{x}(s), \xi_{x}\r \d s
-\frac{\i\lambda\rho_1}{k_0}\int_0^\infty \nu(s)\l \zeta_{x}(s), W\r \d s\\\noalign{\vskip0.7mm}
&\quad+\frac{ l k}{k_0}\int_0^\infty \nu(s)\l \zeta_{x}(s), \varphi_x+\psi + lw \r \d s\\\noalign{\vskip0.7mm}
&\quad-\frac{\rho_1}{k_0}\int_0^\infty \nu(s)\l \zeta_{x}(s), \hat W\r \d s.
\end{align*}
Making use of the equality above, together with \eqref{ZETA} and \eqref{xi-aux}, we derive
the control
\begin{align*}
\Big| \int_0^\infty \nu(s)\l \zeta_{x}(s),(w_x -l\varphi)_{x}\r \d s  \Big| &\leq
c\big[1+ |\lambda|\big]\|u\|_\H \|\zeta\|_\N+ c\|\xi_x\|\|\zeta\|_\N+
c\|u\|_\H\|\widehat u\|_\H\\
&\leq c\big[1+ |\lambda|\big]\|u\|_\H \|\zeta\|_\N+
c\big[1+|\lambda|\big]\|u\|_\H\|\widehat u\|_\H.
\end{align*}
As a consequence, from \eqref{gammol} we find
$$
|\lambda| \|w_x-l\varphi\|^2
\leq  c \big[1+ |\lambda|\big]\|u\|_\H \|\zeta\|_\N
+ c |\lambda|\|\xi\|\|w_x-l\varphi\| +
c \big[1+|\lambda|\big]\|u\|_\H\|\widehat u\|_\H.
$$
Appealing again to \eqref{ZETA} and using Lemma \ref{XI},
we finally get
\begin{align*}
2k_0\|w_x - l\varphi\|^2 & \leq c\bigg[\frac{1}{|\lambda|}+1\bigg]\|u\|_\H \|\zeta\|_\N
+ c\|\xi\|\|w_x-l\varphi\| +
c\bigg[\frac{1}{|\lambda|}+1\bigg]\|u\|_\H\|\widehat u\|_\H\\\noalign{\vskip0.7mm}
& \leq c\eps\bigg[\frac{1}{|\lambda|^2}+1\bigg]\|u\|_\H^2
+ k_0 \|w_x - l\varphi\|^2+
\frac{c}{\eps}\bigg[\frac{1}{|\lambda|}+1\bigg]\|u\|_\H\|\widehat u\|_\H,
\end{align*}
for every $\eps\in (0,1)$ and every $\lambda\neq0$.
The lemma has been proved.
\end{proof}

\begin{lemma}
\label{W}
For every $\eps\in(0,1)$ and every $\lambda\neq0$ the inequality
$$
\rho_1\|W\|^2 \leq c\eps\bigg[\frac{1}{|\lambda|^2}+1\bigg] \|u\|_\H^2+
\frac{c}{\eps|\lambda|^2}\| \varphi_x + \psi + lw\|^2
+ \frac{c}{\eps^3}\bigg[\frac{1}{|\lambda|}+1\bigg]\|u\|_\H\|\widehat u\|_\H
$$
holds for some structural constant $c>0$ independent of $\eps$ and $\lambda$.
\end{lemma}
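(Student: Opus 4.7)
The strategy is to mimic the method of Lemma~\ref{PSI}, replacing the multiplier $P_\Psi$ with the analogous primitive $P_W(x)\doteq\int_0^x W(y)\,\d y$. Since $W\in L^2_*$, we have $P_W\in H^1_0$, and multiplying the resolvent equation \eqref{R9} by $P_W$ in $L^2$ produces, after an integration by parts against $W_x$, the identity
\[
\gamma\|W\|^2 = \i\lambda\rho_3\l\xi,P_W\r + \varpi\l\zeta,P_W\r_\N - l\gamma\l\Phi,P_W\r - \rho_3\l\hat\xi,P_W\r.
\]
The key input for the first term on the right is the representation of $\i\lambda\rho_1 P_W$ obtained by integrating \eqref{R6} on $(0,x)$: thanks to the boundary conditions $w_x(0)=\varphi(0)=\xi(0)=0$,
\[
\i\lambda\rho_1 P_W(x) = k_0(w_x-l\varphi)(x) - lk\varphi(x) - lk\int_0^x(\psi+lw)(y)\,\d y - \gamma\xi(x) + \rho_1\int_0^x\hat W(y)\,\d y,
\]
which is the exact analog of the identity used in Lemma~\ref{PSI}. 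Substituting this into $\i\lambda\rho_3\l\xi,P_W\r=-(\rho_3/\rho_1)\l\xi,\i\lambda\rho_1 P_W\r$ and exploiting \eqref{R1} and \eqref{R5} to trade $\|\varphi\|$ and $\|w\|$ for $|\lambda|^{-1}\|u\|_\H$ (together with the Poincar\'e bound $\|\psi\|\leq c\|\psi_x\|$) yields an estimate of the shape
\[
|\i\lambda\rho_3\l\xi,P_W\r| \leq c\|\xi\|\big[\|w_x-l\varphi\|+\|\psi_x\|+\|\xi\|\big] + c|\lambda|^{-1}\|\xi\|\|u\|_\H + c\big[|\lambda|^{-1}+1\big]\|u\|_\H\|\widehat u\|_\H,
\]
whose quadratic pieces are absorbed by Lemmas \ref{XI}, \ref{psi} and \ref{w} after standard Young inequalities.

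The genuine new difficulty, absent in Lemma~\ref{PSI}, is the cross term $l\gamma\l\Phi,P_W\r$ arising from the Bresse coupling $\gamma(W_x-l\Phi)$ in \eqref{R9}: a naive bound $|\l\Phi,P_W\r|\leq c\|\Phi\|\|W\|$ would contribute $c\|u\|_\H^2$, which is strictly forbidden by the target. To overcome this I would write $\Phi=\i\lambda\varphi-\hat\varphi$ via \eqref{R1} and integrate by parts: with $P_\varphi(x)\doteq\int_0^x\varphi(y)\,\d y$, the boundary terms cancel because $P_\varphi(0)=0$ and $P_W(\ell)=0$, giving $\l\varphi,P_W\r=-\l P_\varphi,W\r$. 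A further use of \eqref{R6} to express $\i\lambda\rho_1 W$ followed by a second integration by parts (made possible by $(w_x-l\varphi)(\ell)=\xi(\ell)=0$) eliminates all $x$-derivatives and produces
\[
l\gamma|\l\Phi,P_W\r| \leq c|\lambda|^{-1}\|u\|_\H\big[\|w_x-l\varphi\|+\|\varphi_x+\psi+lw\|+\|\xi\|\big] + c\big[|\lambda|^{-1}+1\big]\|u\|_\H\|\widehat u\|_\H.
\]
The crucial observation is that Young's inequality applied to $c|\lambda|^{-1}\|u\|_\H\|\varphi_x+\psi+lw\|$ yields precisely $\eps\|u\|_\H^2 + c\eps^{-1}|\lambda|^{-2}\|\varphi_x+\psi+lw\|^2$, producing the second term of the target bound.

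The remaining terms are routine: $|\varpi\l\zeta,P_W\r_\N|\leq c\|\zeta\|_\N\|W\|$ combined with \eqref{ZETA} and Young gives $\eps\|u\|_\H^2 + c_\eps\|u\|_\H\|\widehat u\|_\H$, while $|\rho_3\l\hat\xi,P_W\r|\leq c\|u\|_\H\|\widehat u\|_\H$ is immediate. Collecting all contributions and invoking Lemmas \ref{XI} (with parameter $\eps^2$, to promote the $\sqrt{\eps}$ factors coming from $\|\xi\|$ to the required $c\eps$), \ref{psi} and \ref{w} with parameter $\eps$, together with the elementary bound $\sqrt{|\lambda|^{-2}+1}\leq\sqrt{2}(|\lambda|^{-2}+1)$ to harmonize the various $|\lambda|$-scales, one arranges every error into either $c\eps(|\lambda|^{-2}+1)\|u\|_\H^2$ or $c\eps^{-3}(|\lambda|^{-1}+1)\|u\|_\H\|\widehat u\|_\H$; the generous $\eps^{-3}$ margin in the target exactly accommodates the cascade of Young inequalities chained through Lemmas \ref{XI}, \ref{psi}, \ref{w}. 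As anticipated, the main obstacle is the handling of the $l\Phi$ coupling term, which is the specifically Bresse-type feature distinguishing this lemma from its Timoshenko counterpart and which forces the double integration-by-parts argument above.
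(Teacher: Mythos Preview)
Your proof is correct but follows a substantially different and more laborious route than the paper. The paper multiplies \eqref{R6} directly by $w$ in $L^2$: using \eqref{R5} to replace $\i\lambda\l W,w\r$ by $-\|W\|^2$ plus a $\hat w$-term, one immediately gets
\[
\rho_1\|W\|^2 = k_0\l w_x-l\varphi,w_x\r + lk\l\varphi_x+\psi+lw,w\r - \gamma\l\xi,w_x\r - \rho_1\l W,\hat w\r - \rho_1\l\hat W,w\r.
\]
The crucial $|\lambda|^{-1}$ weight on $\|\varphi_x+\psi+lw\|$ then comes for free from the bound $\|w\|\le c|\lambda|^{-1}(\|W\|+\|\hat w\|)$ via \eqref{R5}, and the remaining terms are handled by Lemmas~\ref{XI} and~\ref{w} exactly as you do. No Bresse cross term appears, and no double integration by parts is needed.

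Your approach instead mirrors Lemma~\ref{PSI}, testing \eqref{R9} against the primitive $P_W$. This is perfectly valid, and the identity you derive is correct, but it forces you to confront the coupling term $l\gamma\l\Phi,P_W\r$, which you resolve by an extra layer of integration by parts through $P_\varphi$ and a second use of \eqref{R6}. The manoeuvre works (the boundary terms indeed vanish as you note, and the $|\lambda|^{-1}$ factor is recovered from $\|\varphi\|\le c|\lambda|^{-1}\|u\|_\H$ via \eqref{R1}), but it is precisely the complication that the paper's choice of multiplier avoids. What you flag as the ``main obstacle'' is an artifact of the route chosen, not of the problem itself: the paper's multiplier $w$ (rather than $P_W$) sidesteps it entirely and delivers the estimate in a few lines.
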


\begin{proof}
Multiplying \eqref{R6} by $w$ in $L^2$ and invoking \eqref{R5}, we have
$$
\rho_1 \|W\|^2 = k_0 \l w_x-l\varphi, w_x\r + lk \l \varphi_x + \psi + lw , w\r
-\gamma \l \xi, w_x \r  - \rho_1 \l W ,\hat w\r - \rho_1 \l \hat W, w\r.
$$
Exploiting \eqref{R5} once more and appealing to Lemmas \ref{XI} and \ref{w},
the modulus of the right-hand side above is less than or equal to
\begin{align*}
& c \|u\|_\H \|w_x-l\varphi\|+
\frac{c}{|\lambda|} \|u\|_\H\|\varphi_x + \psi + lw\|
+ c\|u\|_\H\|\xi\|+ c\bigg[\frac{1}{|\lambda|}+1\bigg]\|u\|_\H\|\widehat u\|_\H\\\noalign{\vskip1mm}
&\leq \eps\|u\|_\H^2
+ \frac{c}{\eps|\lambda|^2}\|\varphi_x + \psi + lw\|^2+\frac{c}{\eps}\big[\|w_x-l\varphi\|^2 + \|\xi\|^2\big]
+c\bigg[\frac{1}{|\lambda|}+1\bigg]
\|u\|_\H\|\widehat u\|_\H\\\noalign{\vskip0.9mm}
&\leq c\eps\bigg[\frac{1}{|\lambda|^2}+1\bigg]\|u\|_\H^2
+ \frac{c}{\eps|\lambda|^2}\|\varphi_x + \psi + lw\|^2
+ \frac{c}{\eps^3}\bigg[\frac{1}{|\lambda|}+1\bigg]\|u\|_\H\|\widehat u\|_\H,
\end{align*}
for every $\eps\in (0,1)$ and every $\lambda\neq0$.
The proof is finished.
\end{proof}

We now establish two bounds on the term $\varphi_x + \psi + lw$.
The first bound will be used in the proof of Theorem  \ref{POL-STAB-TEO}, while the second one
will be used in the proof of Theorem~\ref{EXP-STAB-TEO}.

\begin{lemma}
\label{phiA}
We have the following estimates.

\smallskip
\begin{enumerate}[leftmargin=*]
\item[(i)]
For every $\eps\in(0,1)$ and every $\lambda\neq0$ the inequality
$$
k\|\varphi_x + \psi + lw\|^2 \leq c\eps\bigg[\frac{1}{|\lambda|^2}+1\bigg]\|u\|_\H^2
+\frac{c}{\eps^3}\bigg[\frac{1}{|\lambda|}+|\lambda|^2\bigg]\big[\|\Psi\|^2 + \|u\|_\H \|\hat u\|_\H\big]
$$
holds for some structural constant $c>0$ independent of $\eps$ and $\lambda$.

\medskip
\item[(ii)]
Assume that $\chi_g=0$. Then for every $\eps\in(0,1)$ and every $|\lambda|\geq1$ the inequality
$$
k\|\varphi_{x} +\psi +lw\|^2 \leq c\eps\|u\|_\H^2
+ \frac{c}{\eps^3}\big[\|\Psi\|^2 + \|u\|_\H \|\hat u\|_\H\big]
$$
holds for some structural constant $c>0$ independent of $\eps$ and $\lambda$.
\end{enumerate}

\end{lemma}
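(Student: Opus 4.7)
The plan is to multiply the resolvent equation \eqref{R4} by $\varphi_x+\psi+lw$ in $L^2$, which isolates $k\|\varphi_x+\psi+lw\|^2$ on the left-hand side and yields the identity
\begin{align*}
k\|\varphi_x+\psi+lw\|^2 &= -\i\lambda\rho_2\l\Psi,\varphi_x+\psi+lw\r + b\l\psi_{xx},\varphi_x+\psi+lw\r \\
&\quad -\gamma\l\vartheta_x,\varphi_x+\psi+lw\r + \rho_2\l\hat\Psi,\varphi_x+\psi+lw\r.
\end{align*}
The task then reduces to bounding the three nontrivial inner products in the right-hand side, tracking the dependence on $|\lambda|$ carefully enough that both (i) and (ii) can be read off.

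For the $\psi_{xx}$-term I integrate by parts using the Neumann condition $\psi_x(0)=\psi_x(\ell)=0$, and substitute \eqref{R2} to rewrite $(\varphi_x+\psi+lw)_x$ in terms of $\i\lambda\Phi$, $w_x-l\varphi$ and $\xi$. The principal piece is $-\tfrac{b\rho_1}{k}\i\lambda\l\psi_x,\Phi\r$, which after a second integration by parts (using $\Phi\in H_0^1$) and the substitution $\Phi_x = \i\lambda\varphi_x-\hat\varphi_x$ from \eqref{R1} produces $\lambda^2\l\psi,\varphi_x\r$ plus lower-order remainders. Writing $\varphi_x=(\varphi_x+\psi+lw)-\psi-lw$ and using \eqref{R3}, \eqref{R5} to replace $\psi$ and $w$ by $(\Psi+\hat\psi)/\i\lambda$ and $(W+\hat w)/\i\lambda$, the $\lambda^2$-order gets downgraded to a contribution bounded by $|\lambda|\|\Psi\|\|\varphi_x+\psi+lw\|$ plus $\|u\|_\H\|\hat u\|_\H$-type remainders. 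For the $\vartheta_x$-term in part (i), the bound \eqref{Theta-aux} yields $\|\vartheta_x\|\leq c(1+|\lambda|)\sqrt{\|u\|_\H\|\hat u\|_\H}+c\|\hat u\|_\H$, giving a contribution of size $|\lambda|\|\varphi_x+\psi+lw\|\sqrt{\|u\|_\H\|\hat u\|_\H}$. After Young's inequality the quadratic pieces in $\|\varphi_x+\psi+lw\|$ are absorbed into the left-hand side at cost $\eps$, producing the announced weight $\bigl[|\lambda|^{-1}+|\lambda|^2\bigr]$ on $\|\Psi\|^2+\|u\|_\H\|\hat u\|_\H$ once Lemmas \ref{THETA}--\ref{W} and \eqref{ETA} are invoked to control the remaining occurrences of $\|\vartheta\|$, $\|\xi\|$, $\|w_x-l\varphi\|$, $\|W\|$ and $\|\psi_x\|$. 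This closes (i).

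For part (ii) the aim is to kill the $|\lambda|^2$-factors when $\chi_g=0$ and $|\lambda|\geq 1$. The offending $|\lambda|^2$ in front of $\|\Psi\|^2$ came from the $\i\lambda\l\Psi,\varphi_x+\psi+lw\r$-type contributions collected in part (i); adding the coefficients arising from $-\i\lambda\rho_2\l\Psi,\varphi_x+\psi+lw\r$ and from the $\psi_{xx}$ route gives a net coefficient proportional to $\chi_0=b-k\rho_2/\rho_1$, which by itself does not vanish in general. To promote this coefficient into $\chi_g$ and exploit the hypothesis, I avoid the crude use of \eqref{Theta-aux} and instead integrate by parts $\gamma\l\vartheta_x,\varphi_x+\psi+lw\r$, substitute \eqref{R2} for $(\varphi_x+\psi+lw)_x$, and exploit \eqref{R8} together with the proof of Lemma \ref{THETA} so that the resulting $\i\lambda\l\vartheta,\Phi\r$-type expression reproduces the missing summand $\gamma^2/(\varpi g(0))$ coming from the memory kernel near $s=0$. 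The combined coefficient in front of the surviving $\i\lambda\l\Psi,\varphi_x+\psi+lw\r$-term is then precisely $-\tfrac{\rho_1}{k}\chi_g$, which vanishes by assumption; after this cancellation the previous lemmas yield (ii). The main obstacle is the exact identification of the correct linear combination of multipliers, as it requires careful bookkeeping of the constants arising from the transport equation \eqref{R8} and from integrations against $\mu$.
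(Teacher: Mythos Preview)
Your treatment of part (i) is essentially the paper's argument in slightly different packaging. The paper also multiplies \eqref{R4} by $\varphi_x+\psi+lw$, integrates the $\psi_{xx}$-term by parts and substitutes \eqref{R2}; the only difference is that it writes the dangerous term as $\l\Psi,\Phi_x\r$ directly (via $\i\lambda\psi_x=\Psi_x+\hat\psi_x$) rather than going through $\lambda^2\l\psi,\varphi_x\r$ and then decomposing $\varphi_x$. For the $\vartheta$-contribution in (i) the paper does \emph{not} invoke \eqref{Theta-aux} but instead uses \eqref{R7} to obtain an exact identity (the display labeled ``perno'') expressing $\gamma\l\vartheta,(\varphi_x+\psi+lw)_x\r$ in terms of $\l\eta,\Phi\r_\M$ and $\l\Psi,\Phi_x\r$, and then estimates via $\|\Phi_x\|\leq c|\lambda|\|u\|_\H+c\|\hat u\|_\H$ and \eqref{ETA}. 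Your route through \eqref{Theta-aux} is a legitimate shortcut for (i) and yields the same bound.

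For part (ii), however, your sketch has a gap. You propose to ``exploit \eqref{R8} together with the proof of Lemma~\ref{THETA}'' to extract the $\chi_g$-coefficient from the $\i\lambda\l\vartheta,\Phi\r$-term. But the proof of Lemma~\ref{THETA} tests \eqref{R8} against $\vartheta$ in $\M_0$, which is not the right multiplier here and does not by itself produce the cancellation. The paper's mechanism is two-step: first it applies \eqref{R7} (which you do not mention) to rewrite $\i\lambda\l\vartheta,\Phi\r$ as $\tfrac{\varpi}{\rho_3}\l\eta,\Phi\r_\M-\tfrac{\gamma}{\rho_3}\l\Psi,\Phi_x\r$ plus lower order; second it multiplies \eqref{R8} by $\varphi$ in the weighted space $\M$ (not $\M_0$), yielding $\l\eta,\Phi\r_\M=g(0)\l\vartheta,(\varphi_x+\psi+lw)_x\r$ plus lower order. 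Substituting the second into the first gives a closed linear equation for $\gamma\l\vartheta,(\varphi_x+\psi+lw)_x\r$ in terms of $\l\Psi,\Phi_x\r$, whose solution, once inserted back into the master identity, leaves $\l\Psi,\Phi_x\r$ with coefficient $\tfrac{\varpi g(0)\chi_g}{\sigma_g}$ (here $\sigma_g=\varpi g(0)-\rho_3 k/\rho_1\neq0$ whenever $\chi_g=0$). The appearance of $g(0)$ comes from $\l\vartheta,\varphi\r_\M=g(0)\l\vartheta_x,\varphi_x\r$, which is what makes the $\chi_g$-cancellation work. So the missing ingredients in your outline are the use of \eqref{R7} and the specific choice of multiplier $\varphi$ in $\M$ for \eqref{R8}.
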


\begin{proof}
Multiplying \eqref{R4} by $\varphi_x + \psi + lw$ in $L^2$, we obtain
\begin{align}
\label{HL-1}
k\|\varphi_x + \psi + lw\|^2 &
= -\i\lambda \rho_2 \l \Psi, \varphi_x + \psi + lw\r- b \l \psi_x, (\varphi_x + \psi + lw)_x \r\\\nonumber
&\quad\,+ \gamma \l \vartheta, (\varphi_x + \psi + lw)_x\r + \rho_2\l  \hat \Psi, \varphi_x + \psi + lw\r.
\end{align}
In the light of \eqref{R1}, \eqref{R3} and \eqref{R5}, the first term in the right-hand side can be rewritten as
$$
-\i\lambda \rho_2 \l \Psi, \varphi_x + \psi + lw\r = \rho_2 \l\Psi, \Phi_x \r + \rho_2\|\Psi\|^2
+ l\rho_2\l \Psi, W\r +\rho_2 \l\Psi, \hat \varphi_{x} + \hat \psi + l \hat w\r.
$$
Appealing to \eqref{R2} and \eqref{R3}, we also write the second term in the right-hand side of \eqref{HL-1} as
\begin{align*}
- b \l \psi_x, (\varphi_x + \psi + lw)_x \r
&= -\frac{b \rho_1}{k}  \l \Psi, \Phi_x\r + \frac{blk_0}{k} \l \psi_x , w_x - l \varphi  \r \\\noalign{\vskip0.5mm}
&\quad - \frac{ bl \gamma}{k}\l \psi_x, \xi\r + \frac{b\rho_1}{k} [\l \psi_x ,\hat \Phi \r
+  \l \hat \psi_{x}, \Phi\r].
\end{align*}
Substituting the two identities above into \eqref{HL-1}, we are led to
\begin{equation}
\label{HL-2}
k\|\varphi_x + \psi + lw\|^2
= \gamma \l \vartheta, (\varphi_x + \psi + lw)_x\r
+ \Big(\rho_2 - \frac{b \rho_1}{k}\Big)  \l \Psi, \Phi_x\r + {R}_1,
\end{equation}
where
\begin{align*}
{R}_1 &= \rho_2\|\Psi\|^2 + l\rho_2\l \Psi, W\r +\frac{blk_0}{k} \l \psi_x , w_x - l \varphi  \r
- \frac{ bl \gamma}{k}\l \psi_x, \xi\r\\
& \quad +\rho_2[\l  \hat \Psi, \varphi_x + \psi + lw\r + \l\Psi, \hat \varphi_{x} + \hat \psi + l \hat w\r]
+ \frac{b\rho_1}{k} [\l \psi_x ,\hat \Phi \r
+ \l \hat \psi_{x}, \Phi\r].
\end{align*}
Exploiting now \eqref{R2} and \eqref{R7}, we find the equality
\begin{align}
\label{perno}
\gamma \l \vartheta, (\varphi_x + \psi + lw)_x\r &=
\frac{\varpi\rho_1\gamma}{\rho_3 k} \l \eta , \Phi \r_\M
-\frac{\rho_1\gamma^2}{\rho_3k}  \l \Psi, \Phi_x\r
+\frac{l\gamma^2}{k}\l \vartheta, \xi\r\\\noalign{\vskip1mm}\nonumber
&\quad - \frac{lk_0\gamma}{k} \l \vartheta , w_x - l \varphi  \r
- \frac{\gamma\rho_1}{k} [\l \vartheta , \hat \Phi \r
+ \l \hat \vartheta, \Phi\r].
\end{align}
Plugging \eqref{perno} into \eqref{HL-2} and recalling the definition of $R_1$, we readily derive the control
\begin{align*}
k\|\varphi_x + \psi + lw\|^2 \leq c\|\Phi_x\|\big[\|\Psi\| + \|\eta\|_\M\big]+
c\|u\|_\H\big[\|\Psi\|+ \|\psi_x\|  +\|\vartheta\|+\|\hat u\|_\H\big].
\end{align*}
With the aid of \eqref{R1}, we see that
$$\|\Phi_x\|\leq c|\lambda|\|\varphi_x\| + c\|\hat \varphi_x\| \leq c|\lambda|\|u\|_\H + c\|\hat u\|_\H.$$
The estimate above, together with \eqref{ETA} and Lemmas \ref{THETA} and \ref{psi}, yield the following bound
\begin{align*}
&\|\Phi_x\|\big[\|\Psi\| + \|\eta\|_\M\big]+
\|u\|_\H\big[\|\Psi\|+ \|\psi_x\|  +\|\vartheta\|+\|\hat u\|_\H\big]\\\noalign{\vskip3mm}
&\leq c|\lambda|\|u\|_\H \|\Psi\| + c|\lambda|\|u\|_\H \|\eta\|_\M +
c\|u\|_\H\big[\|\Psi\| + \|\psi_x\|  +\|\vartheta\|\big]+ c\|u\|_\H\|\hat u\|_\H\\\noalign{\vskip1mm}
&\leq \eps \|u\|_\H^2 +\frac{c}{\eps}\big[\|\Psi\|^2 + \|\psi_x\|^2
+\|\vartheta\|^2+\|u\|_\H \|\hat u\|_\H\big]
+ \frac{c|\lambda|^2}{\eps} \big[\|\Psi\|^2 + \|u\|_\H \|\hat u\|_\H \big]\\\noalign{\vskip1mm}
&\leq c\eps\bigg[\frac{1}{|\lambda|^2}+1\bigg]\|u\|_\H^2
+\frac{c}{\eps^3}\bigg[\frac{1}{|\lambda|}+|\lambda|^2\bigg]\big[\|\Psi\|^2 +\|u\|_\H \|\hat u\|_\H\big],
\end{align*}
for every $\eps\in(0,1)$ and every $\lambda\neq0$.
The proof of item (i) is finished.

We now proceed with the proof of item (ii). To this end, we multiply \eqref{R8} by $\varphi$ in~$\M$.
Invoking \eqref{R1}, we infer that
\begin{align*}
\l \eta , \Phi \r_\M &= -\l \vartheta, \varphi \r_\M - \l T \eta , \varphi \r_\M  - \l \hat \eta, \varphi \r_\M
- \l \eta, \hat \varphi \r_\M\\
& = - g(0)\l \vartheta_x, \varphi_x\r - \int_0^\infty \mu'(s)\l \eta_x(s), \varphi_x \r \d s
- \l \hat \eta, \varphi \r_\M - \l \eta, \hat \varphi \r_\M,
\end{align*}
where the second equality follows by integrating by parts in $s$ the term
$\l T \eta , \varphi \r_\M$. Thus, we get
\begin{align*}
\l \eta , \Phi \r_\M &= g(0)\l \vartheta, (\varphi_x + \psi + lw)_x \r
-g(0)\l \vartheta, \psi_x\r - l g(0) \l \vartheta, w_x\r\\
&\quad\, -\int_0^\infty \mu'(s)\l \eta_x(s), \varphi_x \r \d s
- \l \hat \eta, \varphi \r_\M - \l \eta, \hat \varphi \r_\M.
\end{align*}
Substituting the identity above into \eqref{perno}, we obtain
$$
\gamma\Big(\varpi g(0) - \frac{\rho_3 k}{\rho_1}  \Big)\l \vartheta, (\varphi_x + \psi + lw)_x \r
= \gamma^2 \l \Psi, \Phi_x \r + {R}_2,
$$
having set
\begin{align*}
{R}_2 &= \varpi \gamma\big[g(0)\l \vartheta, \psi_x\r + lg(0) \l \vartheta, w_x\r
+ \int_0^\infty \mu'(s)\l \eta_x(s), \varphi_x \r \d s\big]-\frac{l\gamma^2 \rho_3}{\rho_1}\l \vartheta, \xi\r
\\& \quad + \frac{lk_0\gamma\rho_3}{\rho_1} \l \vartheta , w_x - l \varphi  \r
+\gamma \rho_3[ \l \vartheta , \hat \Phi \r
+ \l \hat \vartheta, \Phi\r]+  \varpi \gamma[\l \hat \eta, \varphi \r_\M +  \l \eta, \hat \varphi \r_\M].
\end{align*}
At this point, introducing the number
$$\sigma_g=\varpi g(0) - \frac{\rho_3 k}{\rho_1}$$
and noting that $\chi_g = 0\, \Rightarrow\,\sigma_g\neq0$, we get
$$
\gamma \l \vartheta, (\varphi_x + \psi + lw)_x \r = \frac{\gamma^2}{\sigma_g} \l \Psi, \Phi_x \r
+ \frac{1}{\sigma_g}{R}_2.
$$
Plugging this equality into \eqref{HL-2}, we end up with
$$
k\|\varphi_x + \psi + lw\|^2
= \frac{\varpi g(0)\chi_g}{\sigma_g} \l \Psi, \Phi_x \r
+ {R}_1 + \frac{1}{\sigma_g}{R}_2.
$$
Since $\chi_g=0$ by assumption, the first term in the right-hand side vanishes. Hence, recalling
the definitions of $R_1,R_2$ and exploiting \eqref{DISS} together with Lemmas
\ref{THETA} and \ref{psi}, we estimate
\begin{align*}
k\|\varphi_x + \psi + lw\|^2 &\leq c\|u\|_\H\big[\|\Psi\|+ \|\psi_x\| +
\|\vartheta\| + \sqrt{\Gamma[\eta]} + \|\hat u\|_\H\big]\\\noalign{\vskip0.5mm}
& \leq \eps \|u\|_\H^2 + \frac{c}{\eps}\big[\|\Psi\|^2+\|\psi_x\|^2  +\|\vartheta\|^2
+ \|u\|_\H\|\hat u\|_\H\big]\\
& \leq c\eps \|u\|_\H^2 + \frac{c}{\eps^3}
\big[\|\Psi\|^2+\|u\|_\H\|\hat u\|_\H\big],
\end{align*}
for every $\eps \in (0,1)$ and every $|\lambda|\geq1$.
The proof is finished.
\end{proof}

In the proof of Theorem~\ref{EXP-STAB-TEO} a further bound on the term $\varphi_{x} +\psi +lw$ will be needed.

\begin{lemma}
\label{phiB}
Assume that $\chi_h=0$. Then for every $\eps\in(0,1)$ and every $|\lambda|\geq1$ the inequality
$$
k\|\varphi_{x} +\psi +lw\|^2 \leq c\eps \|u\|_\H^2 + \frac{c}{\eps^3}\big[\|W\|^2 + \|u\|_\H\|\hat u\|_\H\big]
$$
holds for some structural constant $c>0$ independent of $\eps$ and $\lambda$.
\end{lemma}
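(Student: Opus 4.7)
The plan is to adapt the three-step structure of the proof of Lemma~\ref{phiA}(ii), with the ``$\xi$-side'' of the system $\{\xi,\zeta,W\}$ taking over the role that the ``$\vartheta$-side'' $\{\vartheta,\eta,\Psi\}$ played there. Since $\varphi_x+\psi+lw$ enters \eqref{R6} multiplied by $lk$ (rather than inside a derivative, as in \eqref{R4}), the natural starting point is to multiply \eqref{R6} by $\tfrac{1}{l}(\varphi_x+\psi+lw)$ in $L^2$ and integrate by parts; all boundary terms vanish because $w_x$, $\varphi$, and $\xi$ are zero at the endpoints. Using \eqref{R1}, \eqref{R3}, \eqref{R5} to rewrite $\i\lambda(\varphi_x+\psi+lw) = \Phi_x+\Psi+lW+(\hat\varphi_x+\hat\psi+l\hat w)$, one arrives at a base identity of the form
\begin{align*}
k\|\varphi_x+\psi+lw\|^2
&= \tfrac{\rho_1}{l}\l W,\Phi_x\r + \rho_1\|W\|^2 - \tfrac{k_0}{l}\l w_x-l\varphi,(\varphi_x+\psi+lw)_x\r \\
&\quad + \tfrac{\gamma}{l}\l\xi,(\varphi_x+\psi+lw)_x\r + \mathsf{R}_1,
\end{align*}
with $\mathsf{R}_1$ a pool of remainders immediately estimated by Young's inequality together with Lemmas~\ref{XI}, \ref{w}, \ref{PSI}.

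Next I would expand the two derivative summands via \eqref{R2}. For $\l w_x-l\varphi,(\varphi_x+\psi+lw)_x\r$, the identity $\i\lambda(w_x-l\varphi) = W_x-l\Phi+(\hat w_x-l\hat\varphi)$ (from \eqref{R1}, \eqref{R5}) produces, after integration by parts, a further contribution to $\l W,\Phi_x\r$ together with a $\|\Phi\|^2$ contribution. For $\l\xi,(\varphi_x+\psi+lw)_x\r$, I would invoke \eqref{R9} to substitute $\i\lambda\xi$ in terms of the memory $\zeta_{xx}$ and the coupling $W_x-l\Phi$; after integration by parts this yields the analogue of identity \eqref{perno},
$$
\gamma\l\xi,(\varphi_x+\psi+lw)_x\r = \tfrac{\varpi\gamma\rho_1}{\rho_3 k}\l\zeta,\Phi\r_{\N} - \tfrac{\gamma^2\rho_1}{\rho_3 k}\l W,\Phi_x\r - \tfrac{l\gamma^2\rho_1}{\rho_3 k}\|\Phi\|^2 + \mathsf{R}_2.
$$
The appearance of the $\|\Phi\|^2$ summand, which is \emph{absent} from the parallel step in Lemma~\ref{phiA}(ii), is produced by the $l\Phi$ portion of the coupling $W_x-l\Phi$ inside \eqref{R9}, and is the main technical obstruction of the argument. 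Finally, in analogy with the use of \eqref{R8} in Lemma~\ref{phiA}(ii), I would multiply \eqref{R10} by $\varphi$ in $\N$ and integrate by parts in the variable $s$ (invoking $\lim_{s\to0}\|\zeta_x(s)\|=0$) to arrive at $\l\zeta,\Phi\r_{\N} = h(0)\l\xi,(\varphi_x+\psi+lw)_x\r + \mathsf{R}_3$.

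Substituting this last identity into the analogue of \eqref{perno} and solving for $\l\xi,(\varphi_x+\psi+lw)_x\r$ — which is legitimate because $\chi_h=0$ forces $\sigma_h \doteq \varpi h(0)-\rho_3 k/\rho_1 \neq 0$ — leads to a relation of the form $\tfrac{\gamma}{l}\l\xi,(\varphi_x+\psi+lw)_x\r = \tfrac{\gamma^2}{l\sigma_h}\l W,\Phi_x\r + \tfrac{\gamma^2}{\sigma_h}\|\Phi\|^2 + \mathsf{R}_4$. Plugging this into the base identity, the net coefficient of $\l W,\Phi_x\r$ in $k\|\varphi_x+\psi+lw\|^2$ becomes $\tfrac{\rho_1(k-k_0)}{lk} + \tfrac{\gamma^2}{l\sigma_h} = \tfrac{\varpi h(0)\chi_h}{l\sigma_h}$ — a direct consequence of the elementary algebraic identity $\rho_1(k-k_0)\sigma_h + k\gamma^2 = k\varpi h(0)\chi_h$ — while the net coefficient of $\|\Phi\|^2$ becomes $\tfrac{\varpi h(0)\chi_h}{\sigma_h}-\rho_1$. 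Under the hypothesis $\chi_h=0$, the first coefficient vanishes outright and the second reduces to $-\rho_1$; since $-\rho_1\|\Phi\|^2 \le 0$, this contribution may simply be dropped from the right-hand side. The remaining good remainders are then bounded via Young's inequality combined with \eqref{DISS}, \eqref{ETA}, \eqref{ZETA} and Lemmas~\ref{XI}, \ref{w}, \ref{PSI} (the last applicable because $|\lambda|\ge 1$), yielding the claimed estimate.
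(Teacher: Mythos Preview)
Your proposal is correct and follows essentially the same route as the paper's proof: multiply \eqref{R6} by $\varphi_x+\psi+lw$, expand the two derivative terms via \eqref{R2} and \eqref{R9} to obtain the analogue of \eqref{perno}, then multiply \eqref{R10} by $\varphi$ in $\N$ to close the loop and produce the coefficient $\varpi h(0)\chi_h/\sigma_h$ in front of $\l W,\Phi_x\r$, which vanishes under $\chi_h=0$; the leftover $\|\Phi\|^2$ term has the favorable sign and is discarded. The only cosmetic difference is that the paper keeps the factor $l$ on the left-hand side throughout (so the $\|\Phi\|^2$ term appears as $+l\rho_1\|\Phi\|^2$ on the left rather than $-\rho_1\|\Phi\|^2$ on the right), and it estimates the remainders using only Lemmas~\ref{XI} and~\ref{w} together with \eqref{DISS}, without invoking Lemma~\ref{PSI}.
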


\begin{proof}
Multiplying \eqref{R6} by $\varphi_x + \psi + lw$ in $L^2$, we obtain
\begin{align}
\label{HG-1}
l k\|\varphi_x + \psi + lw\|^2 &
= -\i\lambda \rho_1 \l W, \varphi_x + \psi + lw\r- k_0 \l w_x-l \varphi, (\varphi_x + \psi + lw)_x \r\\\nonumber
&\quad\,+ \gamma \l \xi, (\varphi_x + \psi + lw)_x\r + \rho_1\l  \hat W, \varphi_x + \psi + lw\r.
\end{align}
Appealing to \eqref{R1}, \eqref{R3} and \eqref{R5}, we rewrite the first term in the right-hand side as
$$
-\i\lambda \rho_1 \l W, \varphi_x + \psi + lw\r = \rho_1 \l W, \Phi_x \r + \rho_1\l W, \Psi\r
+ l\rho_1\| W\|^2 +\rho_1 \l W, \hat \varphi_{x} + \hat \psi + l \hat w\r.
$$
In addition, making use of \eqref{R2} together with \eqref{R1} and \eqref{R5}, the second term in the right-hand side
of \eqref{HG-1} can be rewritten as
\begin{align*}
- k_0 \l w_x - l\varphi, (\varphi_x + \psi + lw)_x \r
&= -\frac{k_0 \rho_1}{k}  \l W, \Phi_x\r
-\frac{lk_0 \rho_1}{k}  \|\Phi\|^2 + \frac{lk_0^2}{k} \|w_x - l \varphi  \|^2 \\\noalign{\vskip0.5mm}
&\quad - \frac{l k_0 \gamma}{k}\l w_x - l\varphi, \xi\r + \frac{k_0\rho_1}{k}[\l w_x-l\varphi ,\hat \Phi \r
+ \l \hat w_{x}- l\hat \varphi, \Phi\r].
\end{align*}
Plugging the two equalities above into \eqref{HG-1}, we obtain
\begin{align}
\label{HG-2}
&lk\|\varphi_x + \psi + lw\|^2+\frac{lk_0 \rho_1}{k}  \|\Phi\|^2\\\nonumber
&= \gamma \l \xi, (\varphi_x + \psi + lw)_x\r
+ \Big(\rho_1 - \frac{k_0 \rho_1}{k}\Big)  \l W, \Phi_x\r + {R}_3,
\end{align}
having set
\begin{align*}
{R}_3 &=  \rho_1\l W, \Psi\r
+ l\rho_1\| W\|^2 + \frac{lk_0^2}{k} \|w_x - l \varphi  \|^2
- \frac{l k_0 \gamma}{k}\l w_x - l\varphi, \xi\r\\
& \quad +\rho_1[\l  \hat W, \varphi_x + \psi + lw\r+
 \l W, \hat \varphi_{x} + \hat \psi + l \hat w\r]
+ \frac{k_0\rho_1}{k}[\l w_x-l\varphi ,\hat \Phi \r
+ \l \hat w_{x}- l\hat \varphi, \Phi\r].
\end{align*}
Owing to \eqref{R2} and \eqref{R9}, we get
\begin{align}
\label{perno2}
\gamma \l \xi, (\varphi_x + \psi + lw)_x\r &=
\frac{\varpi\rho_1\gamma}{\rho_3 k} \l \zeta , \Phi \r_\N
-\frac{\rho_1\gamma^2}{\rho_3k} \l W, \Phi_x\r
-\frac{l\rho_1\gamma^2}{\rho_3k}\|\Phi\|^2
\\\noalign{\vskip1mm}\nonumber
&\quad +\frac{l\gamma^2}{k}\|\xi\|^2 - \frac{lk_0\gamma}{k} \l \xi , w_x - l \varphi  \r
- \frac{\gamma\rho_1}{k} [\l \xi , \hat \Phi \r
+ \l \hat \xi, \Phi\r].
\end{align}
Moreover, we multiply \eqref{R10} by $\varphi$ in~$\N$.
Exploiting \eqref{R1}, we are led to
\begin{align*}
\l \zeta , \Phi \r_\N &= -\l \xi, \varphi \r_\N - \l T \zeta , \varphi \r_\N  - \l \hat \zeta, \varphi \r_\N
- \l \zeta, \hat \varphi \r_\N\\
& = - h(0)\l \xi_x, \varphi_x\r - \int_0^\infty \nu'(s)\l \zeta_x(s), \varphi_x \r \d s
- \l \hat \zeta, \varphi \r_\N - \l \zeta, \hat \varphi \r_\N,
\end{align*}
where the second equality follows by integrating by parts in $s$ the term
$\l T \zeta , \varphi \r_\N$. Thus, we find
\begin{align*}
\l \zeta , \Phi \r_\N &= h(0)\l \xi, (\varphi_x + \psi + lw)_x \r
-h(0)\l \xi, \psi_x\r - l h(0) \l \xi, w_x\r\\
&\quad\, - \int_0^\infty \nu'(s)\l \zeta_x(s), \varphi_x \r \d s
- \l \hat \zeta, \varphi \r_\N - \l \zeta, \hat \varphi \r_\N.
\end{align*}
A substitution of the identity above into \eqref{perno2} yields
$$
\gamma\Big(\varpi h(0) - \frac{\rho_3 k}{\rho_1}  \Big)\l \xi, (\varphi_x + \psi + lw)_x \r
= \gamma^2 \l W, \Phi_x \r + l \gamma^2 \|\Phi\|^2 + {R}_4,
$$
where
\begin{align*}
{R}_4 &= \varpi \gamma\big[h(0)\l \xi, \psi_x\r + lh(0) \l \xi, w_x\r
+ \int_0^\infty \nu'(s)\l \zeta_x(s), \varphi_x \r \d s\big]- \frac{l\gamma^2 \rho_3}{\rho_1}\|\xi\|^2
\\& \quad + \frac{lk_0\gamma\rho_3}{\rho_1} \l \xi , w_x - l \varphi  \r
+\gamma \rho_3[ \l \xi , \hat \Phi \r
+ \l \hat \xi, \Phi\r]+  \varpi \gamma[\l \hat \zeta, \varphi \r_\N +  \l \zeta, \hat \varphi \r_\N].
\end{align*}
At this point, introducing the number
$$\sigma_h = \varpi h(0) - \frac{\rho_3 k}{\rho_1},$$
and noting that $\chi_h = 0\, \Rightarrow\,\sigma_h\neq0$, we get
$$
\gamma \l \xi, (\varphi_x + \psi + lw)_x \r = \frac{\gamma^2}{\sigma_h} \l W, \Phi_x \r
+ \frac{l \gamma^2}{\sigma_h} \|\Phi\|^2
+ \frac{1}{\sigma_h}{R}_4.
$$
Plugging the equality above into \eqref{HG-2}, we arrive at
$$
l k\|\varphi_x + \psi + lw\|^2 + \Big(\frac{lk_0 \rho_1}{k} - \frac{l \gamma^2}{\sigma_h}\Big)\|\Phi\|^2
= \frac{\varpi h(0)\chi_h}{\sigma_h} \l W, \Phi_x \r
+ {R}_3 + \frac{1}{\sigma_h}{R}_4.
$$
Being $\chi_h=0$ by assumption, the first term in the right-hand side vanishes. For the same reason, we also have
$$
\Big(\frac{lk_0 \rho_1}{k} - \frac{l \gamma^2}{\sigma_h}\Big)\|\Phi\|^2
= l\rho_1\|\Phi\|^2\geq0.
$$
Hence, recalling the definitions of $R_3,R_4$ and invoking \eqref{DISS}
together with Lemmas \ref{XI} and~\ref{w}, we estimate
\begin{align*}
k\|\varphi_x + \psi + lw\|^2 &\leq c\|u\|_\H\big[\|W\|+ \|w_x-l\varphi\| +
\|\xi\| + \sqrt{\Gamma[\zeta]} + \|\hat u\|_\H\big]\\\noalign{\vskip1.3mm}
& \leq \eps \|u\|_\H^2 + \frac{c}{\eps}\big[\|W\|^2 +\|w_x-l\varphi\|^2  +\|\xi\|^2
+ \|u\|_\H\|\hat u\|_\H\big]\\
&\leq c\eps \|u\|_\H^2 + \frac{c}{\eps^3}\big[\|W\|^2 + \|u\|_\H\|\hat u\|_\H\big],
\end{align*}
for every $\eps\in(0,1)$ and every $|\lambda|\geq1$.
The lemma is proved.
\end{proof}

\begin{lemma}
\label{PHI}
For every $\eps\in (0,1)$ and every $\lambda\neq0$ the inequality
$$
\rho_1\|\Phi\|^2 \leq c\eps \bigg[\frac{1}{|\lambda|^2}+1\bigg] \|u\|_\H^2
+ \frac{c}{\eps^3}\bigg[\frac{1}{|\lambda|}+1\bigg]
\big[\|\varphi_x + \psi + lw\|^2 + \|u\|_\H \|\hat u\|_\H\big]
$$
holds for some structural constant $c>0$ independent of $\eps$ and $\lambda$.
\end{lemma}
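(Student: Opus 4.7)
The plan is to multiply the resolvent equation \eqref{R2} by $\varphi$ in $L^2$ and use \eqref{R1} to convert the $\l\Phi,\varphi\r$ term into $\|\Phi\|^2$. Concretely, \eqref{R1} implies
$$\i\lambda\l\Phi,\varphi\r=-\|\Phi\|^2-\l\Phi,\hat\varphi\r,$$
so substituting into the $L^2$-inner product of \eqref{R2} with $\varphi$ and integrating by parts in the stress term (the boundary contributions vanish since $\varphi\in H_0^1$) yields the identity
$$
\rho_1\|\Phi\|^2 = k\l\varphi_x+\psi+lw,\varphi_x\r - lk_0\l w_x-l\varphi,\varphi\r + l\gamma\l\xi,\varphi\r - \rho_1\l\hat\Phi,\varphi\r - \rho_1\l\Phi,\hat\varphi\r.
$$
This is the backbone of the proof; the remaining work consists of bounding each of the five terms on the right by the target expression.

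For the first term I would split $\varphi_x=(\varphi_x+\psi+lw)-(\psi+lw)$, so as to single out $k\|\varphi_x+\psi+lw\|^2$; the remainder $k\l\varphi_x+\psi+lw,\psi+lw\r$ is controlled via Poincar\'e (noting $\psi,w\in H^1_*$) by $c\|\varphi_x+\psi+lw\|\|u\|_\H$, which after Young's inequality gives $c\eps\|u\|_\H^2+(c/\eps)\|\varphi_x+\psi+lw\|^2$. For the second and third terms I would first bound $\|\varphi\|\le (c/|\lambda|)(\|u\|_\H+\|\hat u\|_\H)$ via \eqref{R1}, then apply Young splitting $(c/|\lambda|)\|w_x-l\varphi\|\|u\|_\H\le(c\eps/|\lambda|^2)\|u\|_\H^2+(c/\eps)\|w_x-l\varphi\|^2$, and analogously for $\l\xi,\varphi\r$. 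The data-paired pieces $(c/|\lambda|)\|w_x-l\varphi\|\|\hat u\|_\H$ and the analogous $\xi$ term are absorbed directly using the trivial bound $\|w_x-l\varphi\|,\|\xi\|\le c\|u\|_\H$. The two remaining terms $\rho_1\l\hat\Phi,\varphi\r$ and $\rho_1\l\Phi,\hat\varphi\r$ are clearly $O((1+1/|\lambda|)\|u\|_\H\|\hat u\|_\H)$.

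The crucial--and subtle--step is to feed the auxiliary estimates Lemma~\ref{w} and Lemma~\ref{XI} back into $(c/\eps)\|w_x-l\varphi\|^2$ and $(c/\eps)\|\xi\|^2$ with an internally chosen parameter $\eps^2$ (rather than $\eps$ itself). Doing so converts $(c/\eps)\|w_x-l\varphi\|^2$ into
$$
(c/\eps)\cdot\Big\{c\eps^2\big[1/|\lambda|^2+1\big]\|u\|_\H^2+(c/\eps^2)\big[1/|\lambda|+1\big]\|u\|_\H\|\hat u\|_\H\Big\}=c\eps\big[1/|\lambda|^2+1\big]\|u\|_\H^2+(c/\eps^3)\big[1/|\lambda|+1\big]\|u\|_\H\|\hat u\|_\H,
$$
which is exactly the form appearing in the target estimate; an identical manoeuvre handles the $\xi$-contribution via Lemma~\ref{XI}.

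The main obstacle is bookkeeping: one must arrange that every $\|u\|_\H^2$ appearing on the right-hand side comes with the factor $c\eps$, never a bare coefficient, while the data term keeps the factor $1/\eps^3$ rather than a larger negative power of $\eps$. The composition of one Young step with parameter $\eps$ followed by an application of Lemma~\ref{w} (or Lemma~\ref{XI}) with parameter $\eps^2$ produces exactly the exponent $1/\eps^3$ demanded by the statement; any other choice of internal parameter would either spoil the smallness of the $\|u\|_\H^2$ term or worsen the power of $1/\eps$ in front of the data.
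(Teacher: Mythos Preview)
Your proposal is correct and follows essentially the same route as the paper: both multiply \eqref{R2} by $\varphi$, use \eqref{R1} to produce the identity for $\rho_1\|\Phi\|^2$, bound each term, and then feed in Lemmas~\ref{XI} and~\ref{w} with internal parameter $\eps^2$ to obtain the $c/\eps^3$ coefficient. The only cosmetic differences are that the paper handles the first term by the cruder bound $|k\langle\varphi_x+\psi+lw,\varphi_x\rangle|\le c\|u\|_\H\|\varphi_x+\psi+lw\|$ (without your splitting of $\varphi_x$), and for the second and third terms it uses the Poincar\'e estimate $\|\varphi\|\le c\|u\|_\H$ rather than your sharper $\|\varphi\|\le (c/|\lambda|)(\|u\|_\H+\|\hat u\|_\H)$ from \eqref{R1}; either choice closes the argument.
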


\begin{proof}
Multiplying \eqref{R2} by $\varphi$ in $L^2$ and exploiting \eqref{R1}, we obtain
$$
\rho_1 \|\Phi\|^2 = k\l \varphi_x + \psi + lw, \varphi_x \r -lk_0 \l w_x-l\varphi, \varphi\r
+l\gamma \l \xi, \varphi\r - \rho_1 \l \hat \Phi, \varphi\r - \rho_1 \l \Phi, \hat \varphi \r.
$$
In the light of Lemmas \ref{XI} and \ref{w},
the modulus of the right-hand side is less than or equal to
\begin{align*}
&c\|u\|_\H\big[\| \varphi_x + \psi + lw\| + \|w_x-l\varphi\| +\|\xi\| + \|\hat u\|_\H \big]\\\noalign{\vskip1.5mm}
&\leq \eps \|u\|_\H^2 + \frac{c}{\eps}\big[\|\varphi_x + \psi + lw\|^2 + \|w_x-l\varphi\|^2 +
\|\xi\|^2 +  \|u\|_\H\|\hat u\|_\H\big]\\\noalign{\vskip0.7mm}
&\leq c\eps \bigg[\frac{1}{|\lambda|^2}+1\bigg] \|u\|_\H^2
+ \frac{c}{\eps^3}\bigg[\frac{1}{|\lambda|}+1\bigg]\big[\|\varphi_x + \psi + lw\|^2 + \|u\|_\H\|\hat u\|_\H\big]
\end{align*}
for every $\eps\in (0,1)$ and every $\lambda\neq0$,
and we are finished.
\end{proof}

As a first application of the estimates obtained so far, we show that the imaginary axis $\i\R$
is contained in the resolvent set $\varrho(\mathsf{A})$ of the operator $\mathsf{A}$. Such an inclusion will
play a crucial role in the sequel.

\begin{theorem}
\label{incl}
The inclusion $\i\R \subset \varrho(\mathsf{A})$ holds.
\end{theorem}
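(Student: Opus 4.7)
My plan is to combine the resolvent bounds of Lemmas \ref{THETA}--\ref{PHI} with a standard boundary-spectrum argument. Since $\mathsf{A}$ generates a contraction semigroup, the open half-plane $\{\Re\lambda>0\}$ is contained in $\varrho(\mathsf{A})$, so any element of $\sigma(\mathsf{A})\cap\i\R$ lies on the boundary of the spectrum and hence, by the classical inclusion $\partial\sigma(\mathsf{A})\subset\sigma_{\mathrm{ap}}(\mathsf{A})$ valid for closed operators, is an approximate eigenvalue. Therefore, if $\i\R\not\subset\varrho(\mathsf{A})$, there exist $\lambda_*\in\R$ and a sequence $u_n\in\D(\mathsf{A})$ with $\|u_n\|_\H=1$ such that
$$
\widehat u_n \doteq \i\lambda_* u_n-\mathsf{A} u_n \to 0 \quad\text{in } \H.
$$
The goal is to derive a contradiction by proving $\|u_n\|_\H\to 0$, treating $\lambda_*=0$ and $\lambda_*\neq 0$ separately.

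For $\lambda_*=0$ I would proceed as follows. Testing against $u_n$ and invoking \eqref{dissip} and \eqref{ETA}--\eqref{ZETA} gives $\eta_n\to 0$ in $\M$ and $\zeta_n\to 0$ in $\N$. The components \eqref{R1}, \eqref{R3}, \eqref{R5} of the resolvent equation immediately yield $\Phi_n,\Psi_n,W_n\to 0$, while \eqref{R8} and \eqref{R10} give $\vartheta_n,\xi_n\to 0$. The remaining equations reduce in the limit to the isothermal static Bresse system for $\varphi,\psi,w$ with mixed Dirichlet--Neumann boundary conditions. A direct energy identity (multiplying the first, second and third components by $\varphi,\psi,w$ respectively, integrating by parts with the given boundary conditions, and summing) yields $\varphi_x+\psi+lw=0$, $\psi_x=0$ and $w_x-l\varphi=0$. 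Combined with the zero-mean conditions on $\psi,w$ and the nonresonance assumption $l\ell\neq n\pi$ in \eqref{condition}, this forces $\varphi=\psi=w=0$, contradicting $\|u_n\|_\H=1$.

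For $\lambda_*\neq 0$ I would plug $\lambda=\lambda_*$ into Lemmas \ref{THETA}--\ref{PHI}: being $\lambda_*$ fixed and nonzero, every factor $1/|\lambda|^2$, $|\lambda|^2$ and $1+|\lambda|$ appearing in those lemmas reduces to a structural constant. The estimates can then be chained in the order Lemma \ref{PSI} (bound on $\|\Psi\|^2$), then Lemmas \ref{THETA}, \ref{psi}, \ref{phiA}(i), \ref{PHI} (bounds on $\|\vartheta\|^2$, $\|\psi_x\|^2$, $\|\varphi_x+\psi+lw\|^2$, $\|\Phi\|^2$), and finally Lemmas \ref{XI}, \ref{w}, \ref{W} (bounds on $\|\xi\|^2$, $\|w_x-l\varphi\|^2$, $\|W\|^2$). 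Each lemma then produces an estimate of the schematic form $\|\cdot\|^2\leq C\eps\|u_n\|_\H^2+C_\eps\|u_n\|_\H\|\widehat u_n\|_\H$. Summing with the weights of the norm $\|\cdot\|_\H$, fixing $\eps$ small enough, and absorbing via \eqref{equivnorm}, I would obtain
$$
\|u_n\|_\H^2 \leq C\|u_n\|_\H\|\widehat u_n\|_\H \to 0,
$$
contradicting $\|u_n\|_\H=1$.

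The main obstacle is that the resolvent estimates couple the components in a seemingly circular way: for instance, $\|\Phi\|^2$ in Lemma \ref{PHI} is controlled via $\|\varphi_x+\psi+lw\|^2$, which in Lemma \ref{phiA}(i) depends on $\|\Psi\|^2$, which in turn depends on $\|u_n\|_\H^2$. The key observation that unlocks the argument is that Lemma \ref{PSI} already bounds $\|\Psi\|^2$ \emph{without} reference to any other ``large'' component, so the apparent cycle is broken and the cascade propagates smallness unambiguously. A second subtlety is that all the lemmas used for $\lambda_*\neq 0$ degenerate as $\lambda\to 0$ (the coefficients blow up), which is precisely why the case $\lambda_*=0$ must be handled by the separate static argument above rather than through the resolvent chain.
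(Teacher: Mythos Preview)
Your proposal is correct and follows essentially the same approach as the paper's proof. One small caution: in the $\lambda_*=0$ case, the step ``\eqref{R8} and \eqref{R10} give $\vartheta_n,\xi_n\to 0$'' is not immediate (since $T$ is unbounded, $\eta_n\to 0$ in $\M$ does not imply $T\eta_n\to 0$); the paper obtains it by pairing $T\eta_n+\vartheta_n$ with $\vartheta_n$ in the weaker space $\M_0=L^2_\mu(\R^+;L^2)$, integrating by parts in $s$, and using $\Gamma[\eta_n]\to 0$---exactly the mechanism behind Lemma~\ref{THETA}.
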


\begin{proof}
Assume by contradiction that $\i\lambda_0\in \sigma(\mathsf{A})$ for some $\lambda_0\in\R$.
Being $\mathsf{A}$ the generator of a contraction semigroup,
$\i\lambda_0$ is necessarily an approximate eigenvalue (cf.\ \cite[Proposition~B.2]{BattyBook}), meaning that
there exists
$u_n = (\varphi_n,\Phi_n,\psi_n,\Psi_n,w_n,W_n,\vartheta_n,\eta_n,\xi_n,\zeta_n)\in\D(\mathsf{A})$
satisfying
\begin{equation}
\label{appr}
\|u_n\|_\H =1 \,\and\,
\i\lambda_0 u_n - \mathsf{A} u_n \doteq \hat u_n \to0\quad \text{in } \H.
\end{equation}
Suppose that $\lambda_0\neq0$.
Using \eqref{ETA}-\eqref{ZETA} and Lemmas~\ref{THETA}-\ref{W},
together with Lemma~\ref{phiA} item (i)
and Lemma \ref{PHI}, for every $\eps>0$ sufficiently small we get the bounds
\begin{align*}
\|u_n\|_\H^2 &\leq c\bigg[\frac{1}{|\lambda_0|^2}+|\lambda_0|^2\bigg]\Big[\eps\|u_n\|_\H^2
+ \frac{1}{\eps^3}\big[\|\varphi_{nx}+\psi_n+lw_n\|^2
+\|\Psi_n\|^2 + \|u_n\|_\H\|\hat u_n\|_\H\big]\Big]\\\noalign{\vskip0.7mm}
&\leq c\bigg[\frac{1}{|\lambda_0|^2}+|\lambda_0|^2\bigg]^2\Big[\eps\|u_n\|_\H^2
+ \frac{1}{\eps^{15}}
\big[\|\Psi_n\|^2 + \|u_n\|_\H\|\hat u_n\|_\H\big]\Big]\\\noalign{\vskip0.7mm}
&\leq c\bigg[\frac{1}{|\lambda_0|^2}+|\lambda_0|^2\bigg]^3\Big[\eps\|u_n\|_\H^2
+ \frac{1}{\eps^{63}}\|u_n\|_\H\|\hat u_n\|_\H\Big]
\end{align*}
where as before $c>0$ denotes a generic constant depending only on the structural quantities
of the problem (in particular, independent of $\eps$ and $\lambda_0$).
Fixing $\eps=\eps(\lambda_0)>0$ small enough that
$$
c\bigg[\frac{1}{|\lambda_0|^2}+|\lambda_0|^2\bigg]^3\eps <\frac12,
$$
we conclude that there exists a constant $K = K(\lambda_0)>0$ such that
$$
\|u_n\|_\H \leq K \|\hat u_n\|_\H.
$$
The latter is incompatible with \eqref{appr}. We are left to analyze the case $\lambda_0=0$.
In this situation, relation \eqref{appr} takes the form
\begin{equation}
\label{specdue}
\|u_n\|_\H =1 \,\and\,\mathsf{A} u_n \to0\quad \text{in } \H.
\end{equation}
Exploiting \eqref{dissip}, from \eqref{specdue} we obtain
\begin{equation}
\label{DISSz}
\Gamma[\eta_n]+\Gamma[\zeta_n]
= \frac{2}{\varpi}|\Re \l \mathsf{A} u_n, u_n\r_\H| \to0.
\end{equation}
Due to \eqref{assnucleo1}-\eqref{assnucleo2}, the latter implies that
$\eta_n\to0$ in $\M$ and $\zeta_n\to0$ in $\N$. Writing the second relation in \eqref{specdue}
componentwise, we also infer that $\Phi_n\to0$ in $L^2$ and $\Psi_n,W_n\to0$ in $L^2_*$. Moreover, we get
\begin{align}
\label{R2z}
& k(\varphi_{nx} +\psi_n +lw_n)_x + l k_0(w_{nx} - l\varphi_n)-l\gamma \xi_n \to 0 \quad \text{in }\, L^2,\\\noalign{\vskip0.3mm}
\label{R4z}
& b\psi_{xx} -k(\varphi_{nx} +\psi_n +l w_n) -\gamma\vartheta_{nx}\to 0\quad \text{in }\, L^2_*,\\\noalign{\vskip0.3mm}
\label{R6z}
&k_0(w_{nx} - l\varphi_n)_x - l k(\varphi_{nx} +\psi_n +lw_n) - \gamma \xi_{nx}\to 0\quad \text{in }\, L^2_*,\\\noalign{\vskip0.7mm}
\label{R8z}
&T\eta_n +\vartheta_n \to 0\quad \text{in }\, \M,\\\noalign{\vskip0.3mm}
\label{R10z}
& T\zeta_n + \xi_n \to 0 \quad \text{in }\, \N.
\end{align}
Multiplying \eqref{R8z} by $\vartheta_n$ in the space $\M_0=L^2_\mu(\R^+; L^2)$
and recalling that $\|\vartheta_n\|$ is bounded,
we find (cf.\ the proof of Lemma \ref{THETA})
$$
g(0)\|\vartheta_{n}\|^2 + \l T\eta_n, \vartheta_{n} \r_{\M_0} = g(0)\|\vartheta_{n}\|^2
+\int_0^\infty \mu'(s)\l \eta_n(s), \vartheta_n \r \d s \to0.
$$
Invoking \eqref{DISSz}, it is readily seen that
$| \int_0^\infty \mu'(s)\l \eta_n(s), \vartheta_n \r \d s|\to 0$. Thus, $\vartheta_n\to 0$
in $L^2$. Making use of \eqref{R10z} and arguing in the same way, we also have $\xi_n\to 0$ in $L^2$.
Finally, we multiply \eqref{R2z} by $\varphi_n$, \eqref{R4z} by $\psi_n$ and \eqref{R6z}
by $w_n$ in the respective spaces. Summing up, we obtain the convergence
$$
k\|\varphi_{nx} +\psi_n +lw_n\|^2 + b\|\psi_{nx}\|^2 + k_0\|w_{nx}-l\varphi_n\|^2 - \gamma \l \xi_{n},w_{nx}-l\varphi_{n}\r
-\gamma \l \vartheta_{n}, \psi_{nx}\r\to 0.
$$
Being $\|w_{nx}-l\varphi_{n}\|$, $\|\psi_{nx}\|$ bounded sequences, and recalling that $\|\vartheta_n\|,\|\xi_n\|\to0$,
the last two terms converge to zero. Hence $\varphi_{nx} +\psi_n +lw_n$ and $w_{nx}-l\varphi_n$ go to zero in $L^2$, while
$\psi_{n}$ goes to zero in $H^1_*$. Summarizing, we proved that
$\|u_n\|_\H\to0$, in contradiction with \eqref{specdue}.
\end{proof}

\begin{remark}
\label{remcompt}
The use of approximate eigenvalues (instead of eigenvalues) in the proof above
is motivated by the fact that the continuous inclusion $\D(\mathsf{A}) \subset \H$ is
not compact, due to the presence of the memory component (see e.g.\ \cite{PAZU}). This is also
the reason why the estimates given in Lemmas~\ref{THETA}-\ref{PHI}
are carried out for every $\lambda\neq0$ and not only for $|\lambda|$ large enough,
in order to perform the contradiction argument for $\lambda_0\neq0$ in the proof of Theorem \ref{incl}.
\end{remark}


\section{Proof of Theorem \ref{EXP-STAB-TEO}}
\label{GPSECTION}

\noindent
The proof is based on the celebrated Gearhart-Pr\"{u}ss theorem \cite{Ge,Pru}
(see also~\cite[Chapter 5]{BattyBook} for an historical account).

\begin{theorem}[Gearhart-Pr\"{u}ss]
\label{pruss}
A bounded semigroup $\Sigma(t)=\e^{t\mathsf{L}}$ acting on a Hilbert space is exponentially stable if and only if
$\,\i \R \subset \varrho(\mathsf{L})$ and
$$
\limsup_{|\lambda|\to\infty}\|(\i\lambda - \mathsf{L})^{-1}\| < \infty.
$$
\end{theorem}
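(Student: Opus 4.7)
The plan is to establish the two implications of Theorem~\ref{pruss} separately, where necessity is a direct Laplace transform calculation and sufficiency follows the classical approach due to Pr\"{u}ss via Plancherel's theorem.

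\emph{Necessity.} If $\|\Sigma(t)\| \le K \e^{-\omega t}$ for some positive $\omega$, then for $z \in \mathbb{C}$ with $\Re z > -\omega$ the Bochner integral $\int_0^\infty \e^{-z t} \Sigma(t)\, \d t$ converges in operator norm and represents $(z - \mathsf{L})^{-1}$ by Hille--Yosida theory. The resulting bound $\|(z-\mathsf{L})^{-1}\| \le K/(\Re z + \omega)$ evaluated at $z = \i\lambda$ gives $\i\R \subset \varrho(\mathsf{L})$ together with the uniform bound $\sup_{\lambda \in \R} \|(\i\lambda-\mathsf{L})^{-1}\| \le K/\omega$, which is stronger than the claimed $\limsup$ assertion.

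\emph{Sufficiency, preparation.} Assume $\i \R \subset \varrho(\mathsf{L})$ and $M := \sup_{\lambda \in \R} \|(\i\lambda - \mathsf{L})^{-1}\| < \infty$, the supremum being finite thanks to continuity of the resolvent on $\varrho(\mathsf{L})$ combined with the assumed finite $\limsup$. A Neumann series based on the resolvent identity $(z - \mathsf{L})^{-1} = (\i\lambda - \mathsf{L})^{-1} \sum_{n \ge 0}[(\i\lambda - z)(\i\lambda - \mathsf{L})^{-1}]^n$ converges for $|\Re z| < 1/M$, hence the resolvent extends analytically to the symmetric strip $\Omega_\delta := \{z \in \mathbb{C} : |\Re z| < \delta\}$ with $\sup_{z \in \Omega_\delta}\|(z - \mathsf{L})^{-1}\| \le 2M$, for any $\delta < 1/(2M)$. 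Since $\Sigma(t)$ is bounded, its growth bound satisfies $\omega_0(\mathsf{L}) \le 0$, and for $\sigma > 0$ and $x$ in the Hilbert space the function $\mathbf{1}_{t \ge 0}\, \e^{-\sigma t} \Sigma(t) x$ lies in $L^2(\R)$ with Fourier transform $\lambda \mapsto (\sigma + \i\lambda - \mathsf{L})^{-1} x$. Plancherel's identity in a Hilbert space then yields
$$
\int_0^\infty \e^{-2\sigma t} \|\Sigma(t) x\|^2 \, \d t
= \frac{1}{2\pi} \int_{-\infty}^\infty \|(\sigma + \i\lambda - \mathsf{L})^{-1} x\|^2 \, \d\lambda.
$$

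\emph{Contour shift.} The idea is to push $\sigma$ from positive values down into $\Omega_\delta$, gaining an exponential weight on the left-hand side. For $x \in \D(\mathsf{L}^2)$, the iterated resolvent identity $(z - \mathsf{L})^{-1} x = z^{-1} x + z^{-2} \mathsf{L} x + z^{-2} (z - \mathsf{L})^{-1} \mathsf{L}^2 x$ combined with the strip bound produces $O(|\lambda|^{-1})$ decay of the resolvent on vertical lines, uniformly in $\sigma$ on compact subsets of $\Omega_\delta$. A Paley--Wiener argument---grounded on the analyticity of $z \mapsto (z - \mathsf{L})^{-1} x$ in $\Omega_\delta$ and its identification, for large positive $\Re z$, with the Laplace transform of the $\R^+$-supported orbit $t \mapsto \Sigma(t) x$---then justifies an analytic continuation of the Plancherel identity from $\sigma > 0$ down to $\sigma = -\delta/2$, yielding $\int_0^\infty \e^{\delta t} \|\Sigma(t) x\|^2 \, \d t < \infty$ for every $x \in \D(\mathsf{L}^2)$.

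\emph{Main obstacle and conclusion.} The delicate step, which is the central technical difficulty, is promoting the $L^2$-integrability from the dense subspace $\D(\mathsf{L}^2)$ to the whole Hilbert space, with constant depending on $\|x\|$ rather than on $\|\mathsf{L}^2 x\|$. I would handle this by a duality-plus-closed-graph argument: the adjoint $\mathsf{L}^*$ inherits the hypotheses via $[(\i\lambda - \mathsf{L})^{-1}]^* = (-\i\lambda - \mathsf{L}^*)^{-1}$, so the same reasoning applies to $\Sigma(t)^*$, and combining both estimates with the closed graph theorem applied to the map $x \mapsto \Sigma(\cdot)x$ (closed by strong continuity of $\Sigma(t)$) produces a uniform bound $\int_0^\infty \|\Sigma(t) x\|^2 \, \d t \le C \|x\|^2$ valid on the whole Hilbert space. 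Datko's classical theorem---asserting that $L^2$-integrability of every orbit of a bounded $C_0$-semigroup on a Hilbert space is equivalent to exponential stability---then delivers constants $K \ge 1$ and $\omega > 0$ with $\|\Sigma(t)\| \le K \e^{-\omega t}$, closing the proof.
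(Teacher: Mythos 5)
Note first that the paper does not prove Theorem~\ref{pruss} at all: it is quoted as a classical result from \cite{Ge,Pru} (see also \cite[Chapter 5]{BattyBook}), so your proposal can only be measured against the standard proofs in the literature. Your necessity argument and the preparatory steps of sufficiency (finiteness of $M=\sup_{\lambda\in\R}\|(\i\lambda-\mathsf{L})^{-1}\|$, the Neumann-series extension to a strip with bound $2M$, the Plancherel identity for $\sigma>0$, the $O(|\lambda|^{-1})$ decay of $(z-\mathsf{L})^{-1}x$ for $x\in\D(\mathsf{L}^2)$, and the contour shift giving $\int_0^\infty \e^{\delta t}\|\Sigma(t)x\|^2\,\d t<\infty$ on $\D(\mathsf{L}^2)$) are all correct and are indeed the classical route.

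The gap is exactly at the step you yourself flag as the main obstacle, and the fix you propose does not work as stated. The closed graph theorem applies to the map $V:x\mapsto \Sigma(\cdot)x\in L^2(\R^+;H)$ only if $V$ is defined on the whole space, which is precisely what is unknown: $V$ is a closed operator, but knowing $\Sigma(\cdot)x\in L^2$ only for $x$ in the dense subspace $\D(\mathsf{L}^2)$ (with implicit constants involving $\|\mathsf{L}^2x\|$) is compatible with $V$ being a genuinely unbounded, densely defined closed operator; neither closedness nor Banach--Steinhaus can upgrade qualitative finiteness on a dense subspace to a bound $C\|x\|^2$ on all of $H$. Bringing in the adjoint semigroup, which satisfies the same qualitative statement on $\D((\mathsf{L}^*)^2)$, does not repair this by itself: pairing via $t\l\Sigma(t)x,y\r=\int_0^t\l\Sigma(s)x,\Sigma(t-s)^*y\r\d s$ only yields bounds depending on the two graph norms, i.e.\ pointwise bounds on dense sets, to which the uniform boundedness principle again does not apply. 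The duality idea is the right one, but it must be implemented quantitatively inside the Plancherel estimates rather than invoked abstractly: for $x\in\D(\mathsf{L}^2)$ and $y\in H$ one writes $t\,\e^{\omega t}\l\Sigma(t)x,y\r$ as the inverse Fourier integral of $\l(-\omega+\i\lambda-\mathsf{L})^{-2}x,y\r=\l(-\omega+\i\lambda-\mathsf{L})^{-1}x,(-\omega-\i\lambda-\mathsf{L}^*)^{-1}y\r$, applies Cauchy--Schwarz in $\lambda$, and controls each $L^2_\lambda$ factor by $C\|x\|$ and $C\|y\|$ using the quantitative Plancherel bound $\int_\R\|(\omega+\i\lambda-\mathsf{L})^{-1}x\|^2\d\lambda\leq \pi K^2\|x\|^2/\omega$ (valid since $\|\Sigma(t)\|\leq K$), transported from the line $\Re z=\omega$ to $\Re z=-\omega$ via the resolvent identity and the strip bound; this gives $t\,\e^{\omega t}\|\Sigma(t)\|\leq C$ directly, with uniform constants, and no appeal to Datko or the closed graph theorem is needed. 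As written, your final step is therefore a genuine gap, although every ingredient needed for the standard quantitative argument is already present in your outline.
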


We shall treat the two cases $\chi_g=0$ and $\chi_h=0$ separately.

\subsection*{$\boldsymbol{\chi_g=0}$}
We collect \eqref{ETA}-\eqref{ZETA} and Lemmas~\ref{THETA}-\ref{W},
together with Lemma \ref{phiA} item~(ii)
and Lemma \ref{PHI}. For every $\eps>0$ small enough and every $|\lambda|\geq 1$,
we have
\begin{align*}
\|u\|_\H^2 &\leq c \eps\|u\|_\H^2+ \frac{c}{\eps^{3}}
\big[\|\varphi_x+\psi+lw\|^2 + \|\Psi\|^2+ \|u\|_\H\|\hat u\|_\H\big]\\\noalign{\vskip0.5mm}
&\leq c\eps\|u\|_\H^2  + \frac{c}{\eps^{15}}
\big[\|\Psi\|^2+ \|u\|_\H\|\hat u\|_\H\big]\\\noalign{\vskip0.5mm}
&\leq c\eps\|u\|_\H^2  + \frac{c}{\eps^{63}} \|u\|_\H\|\hat u\|_\H,
\end{align*}
where the generic structural constant $c>0$ is independent of $\eps$ and $\lambda$.
Fixing $\eps>0$ sufficiently small that
$c\eps<1/2$, there exists a structural constant $K>0$ independent of $\lambda$ such that
$$
\|u\|_\H \leq K\|\hat u\|_\H
$$
for all $|\lambda|\geq1$.
Since Theorem \ref{incl} tells that $\i\R \subset \varrho(\mathsf{A})$, the relation above
together with the Gearhart-Pr\"{u}ss theorem yield \eqref{defexp}.
\qed

\subsection*{$\boldsymbol{\chi_h=0}$}
We appeal to \eqref{ETA}-\eqref{ZETA}, Lemmas~\ref{THETA}-\ref{W} and
Lemmas \ref{phiB}-\ref{PHI}. For every $\eps>0$ small enough and every $|\lambda|\geq 1$,
we have
\begin{align*}
\|u\|_\H^2 &\leq c \eps\|u\|_\H^2+ \frac{c}{\eps^{3}}
\big[\|\varphi_x+\psi+lw\|^2 + \|W\|^2+ \|u\|_\H\|\hat u\|_\H\big]\\\noalign{\vskip0.5mm}
&\leq c\eps\|u\|_\H^2  + \frac{c}{\eps^{15}}
\big[\|W\|^2+ \|u\|_\H\|\hat u\|_\H\big]\\\noalign{\vskip0.5mm}
&\leq c\eps\|u\|_\H^2 + \frac{c}{\eps^{31}|\lambda|^2}\|u\|_\H^2 + \frac{c}{\eps^{63}} \|u\|_\H\|\hat u\|_\H,
\end{align*}
where again the generic structural constant $c>0$ is independent of $\eps$ and $\lambda$.
Fixing $\eps>0$ sufficiently small that
$c\eps<1/2$, there is a structural constant $K>0$ independent of $\lambda$ such that
$$
\|u\|_\H^2 \leq \frac{K}{|\lambda|^2}\|u\|^2_\H + K  \|u\|_\H\|\hat u\|_\H.
$$
Therefore, for every $|\lambda|\geq \sqrt{2K}$, we end up with
$$
\|u\|_\H \leq 2K \|\hat u\|_\H.
$$
The latter estimate, the inclusion $\i\R \subset \varrho(\mathsf{A})$ and
the Gearhart-Pr\"{u}ss theorem lead to \eqref{defexp}.
\qed

\section{Proof of Theorem \ref{POL-STAB-TEO} - Decay Rate}
\label{BTSECTION}

\noindent
The argument relies on the Borichev-Tomilov theorem \cite{BT}.

\begin{theorem}[Borichev-Tomilov]
\label{BTpol}
Let $\Sigma(t)=\e^{t\mathsf{L}}$ be a bounded semigroup acting on a Hilbert space, and
assume that $\,\i \R \subset \varrho(\mathsf{L})$.
For every fixed $\alpha >0$, we have
$$
\|(\i\lambda - \mathsf{L})^{-1}\|  = {\rm O}(|\lambda|^\alpha)\quad\, \text{as }\, |\lambda|\to \infty
$$
if and only if
$$
\|\Sigma(t)\mathsf{L}^{-1}\| = {\rm O}(t^{-1/\alpha})\quad\, \text{as }\, t\to \infty.
$$
\end{theorem}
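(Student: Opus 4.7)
The plan is to prove the two implications separately. Both rely on passage between the time side (the semigroup $\Sigma(t)$) and the frequency side (the resolvent $(\i\lambda-\mathsf{L})^{-1}$) via Laplace and inverse Laplace transform, and the hypothesis $\i\R\subset\varrho(\mathsf{L})$ guarantees that these transforms make sense on the imaginary axis itself. The Hilbert-space hypothesis enters only in the harder direction, through Plancherel's theorem, and is exactly what produces the sharp matching between the exponents $\alpha$ and $1/\alpha$, without the logarithmic losses present in the Batty--Duyckaerts generalization to Banach spaces.

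For the \emph{converse direction} (decay $\Rightarrow$ resolvent), boundedness of $\Sigma$ gives the Laplace representation $(\mu-\mathsf{L})^{-1} x = \int_0^\infty \e^{-\mu t}\Sigma(t)x\,\d t$ for $\mathrm{Re}\,\mu>0$, which extends continuously to $\mu=\i\lambda$ since $\i\R\subset\varrho(\mathsf{L})$. Composing with $\mathsf{L}^{-1}$ and using commutativity yields
$$
(\i\lambda-\mathsf{L})^{-1}\mathsf{L}^{-1}x = \int_0^\infty \e^{-\i\lambda t}\Sigma(t)\mathsf{L}^{-1}x\,\d t.
$$
Splitting the integral at $t\sim 1/|\lambda|$, bounding the short-time piece by the uniform norm of $\Sigma(t)\mathsf{L}^{-1}$ and integrating by parts in the tail to extract a factor $|\lambda|^\alpha$ from the assumed $t^{-1/\alpha}$ decay, gives $\|(\i\lambda-\mathsf{L})^{-1}\mathsf{L}^{-1}\|=\mathrm{O}(|\lambda|^{\alpha-1})$, hence $\|(\i\lambda-\mathsf{L})^{-1}\|=\mathrm{O}(|\lambda|^\alpha)$.

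For the \emph{forward direction} (resolvent $\Rightarrow$ decay), fix an integer $N>\alpha+1$. For $x\in\D(\mathsf{L}^N)$, establish the inverse-Laplace representation
$$
\Sigma(t)\mathsf{L}^{-N}x = \frac{1}{2\pi\i}\int_{\R} \e^{\i\lambda t}(\i\lambda-\mathsf{L})^{-1}\mathsf{L}^{-N}x\,\d\lambda,
$$
absolutely convergent because $\|(\i\lambda-\mathsf{L})^{-1}\mathsf{L}^{-N}\|=\mathrm{O}(|\lambda|^{\alpha-N})$ is integrable. Applying Plancherel's theorem in $\lambda$ to this Hilbert-space valued integrand, together with a scaling $\lambda\mapsto \lambda/t^{1/\alpha}$ after a standard frequency-cutoff, produces the preliminary bound $\|\Sigma(t)\mathsf{L}^{-N}\|=\mathrm{O}(t^{-N/\alpha})$.

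The main obstacle is now to descend from the high power $\mathsf{L}^{-N}$ back to $\mathsf{L}^{-1}$ without losing the sharp exponent. This is carried out via a moment-type interpolation along the semigroup orbit, schematically of the form
$$
\|\Sigma(t)\mathsf{L}^{-1}x\|\leq C\,\|\Sigma(t)\mathsf{L}^{-N}x\|^{1/N}\,\|\Sigma(t)x\|^{1-1/N},
$$
combined with the uniform bound $\|\Sigma(t)\|\leq M$ from boundedness of $\Sigma$. Substituting $t^{-N/\alpha}$ on the right gives the target rate $t^{-1/\alpha}$ for $\Sigma(t)\mathsf{L}^{-1}$, and density of $\D(\mathsf{L}^N)$ extends the estimate to all of $H$. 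Securing this interpolation inequality with the correct dependence on $N$, and doing so in a way that preserves the sharp exponent rather than picking up logarithmic corrections, is precisely where the careful Hilbert-space analysis of Borichev and Tomilov is indispensable.
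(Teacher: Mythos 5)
First, note that the paper does not prove this statement at all: Theorem \ref{BTpol} is imported verbatim from Borichev--Tomilov \cite{BT} as an abstract tool, so there is no internal proof to compare with; your attempt is a reproof of a cited deep theorem, and as such it has a genuine gap at its core. In the forward direction, the step ``Plancherel in $\lambda$ plus a scaling $\lambda\mapsto\lambda/t^{1/\alpha}$ after a frequency cutoff produces $\|\Sigma(t)\mathsf{L}^{-N}\|=\mathrm{O}(t^{-N/\alpha})$'' is precisely the hard half of the theorem and is asserted rather than proved. Moreover, the mechanism you describe cannot work as stated: since $|\e^{\i\lambda t}|=1$, the $L^2_\lambda$-norm of your integrand $\e^{\i\lambda t}(\i\lambda-\mathsf{L})^{-1}\mathsf{L}^{-N}x$ is independent of $t$, so Plancherel applied in $\lambda$ to that representation yields no decay in $t$ whatsoever. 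The actual Hilbert-space input in \cite{BT} is different: one uses bounds of the type $\sup_{\xi>0}\xi\int_{\R}\|(\xi+\i s-\mathsf{L})^{-1}x\|^2\,\d s\leq C\|x\|^2$ (and the analogue for $\mathsf{L}^*$), obtained by applying Plancherel to $t\mapsto \e^{-\xi t}\Sigma(t)x$, together with a representation of $\langle \Sigma(t)\mathsf{L}^{-N}x,y\rangle$ in which resolvents of $\mathsf{L}$ and of $\mathsf{L}^*$ can be separated and estimated by Cauchy--Schwarz (or, equivalently, a functional-calculus argument); this is what removes the logarithm present in the Banach-space result of Batty--Duyckaerts. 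Incidentally, your closing remark misplaces the difficulty: the descent from $\mathsf{L}^{-N}$ to $\mathsf{L}^{-1}$ via the moment inequality for the sectorial operator $\mathsf{L}^{-1}$ (or via $\Sigma(t)\mathsf{L}^{-N}=(\Sigma(t/N)\mathsf{L}^{-1})^N$ in the reverse direction) is the routine part and holds in Banach spaces; the Hilbert-space structure is needed in the preliminary bound you left unproved.

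The converse (easy) direction is also flawed as written. The boundary representation $(\i\lambda-\mathsf{L})^{-1}\mathsf{L}^{-1}x=\int_0^\infty\e^{-\i\lambda t}\Sigma(t)\mathsf{L}^{-1}x\,\d t$ is not justified (for $\alpha\geq1$, which is the case relevant to this paper, the integrand is not absolutely integrable), and your integration by parts makes matters worse: differentiating $\Sigma(t)\mathsf{L}^{-1}x$ produces $\Sigma(t)x$, which has no decay, so the tail integral diverges and no factor $|\lambda|^{\alpha}$ is ``extracted.'' A correct elementary argument avoids the Laplace transform altogether: from $\ddt\big(\e^{\i\lambda(t-r)}\Sigma(r)y\big)$ one gets the identity $\e^{\i\lambda t}(\i\lambda-\mathsf{L})^{-1}y=\Sigma(t)(\i\lambda-\mathsf{L})^{-1}y+\int_0^t\e^{\i\lambda(t-r)}\Sigma(r)y\,\d r$, then one writes $\Sigma(t)(\i\lambda-\mathsf{L})^{-1}y=\Sigma(t)\mathsf{L}^{-1}\big(\i\lambda(\i\lambda-\mathsf{L})^{-1}y-y\big)$ and chooses $t\sim|\lambda|^{\alpha}$ so that $|\lambda|\,\|\Sigma(t)\mathsf{L}^{-1}\|\leq\tfrac12$, absorbing the resolvent term; this gives $\|(\i\lambda-\mathsf{L})^{-1}\|=\mathrm{O}(|\lambda|^{\alpha})$ and works in any Banach space. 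Alternatively your Laplace route can be repaired by integrating rather than differentiating (passing to $\Sigma(t)\mathsf{L}^{-2}x$, $\Sigma(t)\mathsf{L}^{-k}x$, which do have integrable decay), but that is a different argument from the one you sketched.
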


We exploit \eqref{ETA}-\eqref{ZETA} and Lemmas~\ref{THETA}-\ref{W}, together
with Lemma~\ref{phiA} item (i)
and Lemma~\ref{PHI}. For every $\eps>0$ small enough and $|\lambda|\geq 1$, we have
\begin{align*}
\|u\|_\H^2 &\leq c\eps\|u\|_\H^2 + \frac{c}{\eps^{3}}
\|\varphi_x+\psi+lw\|^2 + \frac{c|\lambda|^2}{\eps^{3}}\big[\|\Psi\|^2+ \|u\|_\H\|\hat u\|_\H\big]\\\noalign{\vskip0.7mm}
&\leq c\eps\|u\|_\H^2 + \frac{c|\lambda|^2}{\eps^{15}}
\big[\|\Psi\|^2+ \|u\|_\H\|\hat u\|_\H\big]\\\noalign{\vskip0.7mm}
&\leq c\eps\|u\|_\H^2 + \frac{c|\lambda|^2}{\eps^{63}} \|u\|_\H\|\hat u\|_\H
\end{align*}
where, as usual, $c>0$ is a generic structural
constant independent of $\eps$ and $\lambda$.
Fixing $\eps>0$ small enough that $c\eps<1/2$, there exists
a constant $K>0$ independent of $\lambda$ such that
$$
\|u\|_\H \leq K|\lambda|^2\|\hat u\|_\H
$$
for all $|\lambda|\geq1$.
In the light of Theorem \ref{incl}, such a control
and the Borichev-Tomilov theorem yield the desired conclusion \eqref{condpoly}.
\qed


\section{Lower Resolvent Estimates for the BGP system}
\label{secopti}

\noindent
In this section, we establish some lower resolvent estimates that will be the key ingredients
in order to prove the second part of Theorem \ref{POL-STAB-TEO}, namely, the optimality of the decay rate.
The following quantitative version of the Riemann-Lebesgue lemma established
in \cite{DLP} will play a crucial role.

\begin{lemma}
\label{QUANRL}
Let $f:\R^+\to\R$ be a nonincreasing, absolutely continuous and summable function.
Denote by
$$
\hat f (\lambda) = \int_0^\infty f(s)\e^{-\i \lambda s} \d s
$$
its (half) Fourier transform, and
assume that $f(s)\to f_0 \in \R$ as $s\to0$. Then
$$
\lim_{\lambda\to\infty}\lambda \hat f (\lambda) = -\i f_0.
$$
\end{lemma}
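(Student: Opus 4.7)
The plan is to reduce the quantitative statement to the classical Riemann-Lebesgue lemma via a single integration by parts. The two structural hypotheses on $f$ play distinct roles: absolute continuity legitimizes the integration by parts, while monotonicity ensures that the boundary term at infinity vanishes and, crucially, that $f'$ is summable with a controlled norm.

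First I would observe that a nonincreasing summable function on $\R^+$ must satisfy $f(s)\to 0$ as $s\to\infty$, since otherwise $f$ would be eventually bounded away from zero in absolute value and the integral $\int_0^\infty f(s)\,\d s$ could not converge. Combined with the assumption $f(s)\to f_0$ as $s\to 0$, the fundamental theorem of calculus for absolutely continuous functions yields
$$
\int_0^\infty |f'(s)|\,\d s \,=\, -\int_0^\infty f'(s)\,\d s \,=\, f_0,
$$
so $f'\in L^1(\R^+)$. Integrating by parts on a compact interval $[\eps,M]$ using the antiderivative $\tfrac{\i}{\lambda}\e^{-\i\lambda s}$ of $\e^{-\i\lambda s}$, and letting $\eps\to 0$ and $M\to\infty$, the boundary term at $M$ vanishes because $f(M)\to 0$, while the one at $\eps$ converges to $-\tfrac{\i f_0}{\lambda}$ by continuity of $f$ at $0^+$. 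This produces the identity
$$
\lambda\,\hat f(\lambda) \,=\, -\i f_0 \,-\, \i\int_0^\infty f'(s)\,\e^{-\i\lambda s}\,\d s.
$$

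At this point, the classical Riemann-Lebesgue lemma applied to the $L^1$ function $f'$ (extended by zero to the whole real line) forces the remaining Fourier-type integral to tend to $0$ as $\lambda\to\infty$, and the conclusion follows. There is no serious obstacle: the content of the lemma is more conceptual than technical, namely that monotonicity of $f$ upgrades the qualitative decay $\hat f(\lambda)=\mathrm{o}(1)$ provided by classical Riemann-Lebesgue to the sharper quantitative asymptotic $\hat f(\lambda) = -\i f_0/\lambda + \mathrm{o}(1/\lambda)$. The only points demanding slight care are that $f$ need not be defined at $s=0$, so the boundary contribution must be interpreted as the one-sided limit $f_0$, and the justification of the improper integration by parts, both of which follow from the absolute continuity hypothesis and from the dominated convergence argument built into $f'\in L^1(\R^+)$.
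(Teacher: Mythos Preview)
Your proof is correct. The paper does not actually prove this lemma; it simply quotes the result from reference~\cite{DLP}, noting in the subsequent remark that the version stated is a special case of a more general asymptotic formula valid for possibly unbounded and nonmonotone functions. Your argument is the natural self-contained one for the monotone case: integration by parts transfers the decay question from $f$ to $f'$, and monotonicity (together with summability, which forces $f(s)\to 0$ at infinity and hence $f\geq 0$) is precisely what makes $f'\in L^1(\R^+)$, so that the classical Riemann-Lebesgue lemma disposes of the remaining integral. This is more elementary than invoking the general machinery of the cited reference, at the cost of covering only the monotone case --- which is exactly what is needed in the paper, since the memory kernels $\mu$ and $\nu$ to which the lemma is applied are nonincreasing by assumption.
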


\begin{remark}
Actually, the main result of \cite{DLP} is more general and provides appropriate
asymptotic controls on $\hat f$ for a wide range of functions, possibly unbounded and nonmonotone.
The one reported above is a particular case which is enough for our purposes.
\end{remark}

\begin{lemma}
\label{stimabassoGP}
Assume that $\chi_g \, \chi_h \neq0$. Then we have (recall that the inclusion $\i\R \subset \varrho(\mathsf{A})$ has
been proved in Theorem~\ref{incl})
$$
\limsup_{\lambda \to \infty} \lambda^{-2}\|(\i \lambda - \mathsf{A})^{-1}\|>0.
$$
\end{lemma}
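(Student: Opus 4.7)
The plan is to exhibit real numbers $|\lambda_n|\to\infty$ and vectors $u_n\in\D(\mathsf{A})$ with $\|u_n\|_\H=1$ for which
$$\|(\i\lambda_n-\mathsf{A})u_n\|_\H = \mathrm{O}(\lambda_n^{-2}),$$
since this yields $\|(\i\lambda_n-\mathsf{A})^{-1}\|\geq \|u_n\|_\H / \|(\i\lambda_n-\mathsf{A})u_n\|_\H \gtrsim \lambda_n^2$, hence the desired $\limsup$ bound. The sequence will be built as a single Fourier mode compatible with the boundary conditions, with the history components chosen explicitly and the memory integrals evaluated through the quantitative Riemann--Lebesgue lemma (Lemma~\ref{QUANRL}).

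Setting $m_n=\frac{n\pi}{\ell}$, I consider
$\varphi_n=a_n\sin(m_n x)$, $\psi_n=b_n\cos(m_n x)$, $w_n=c_n\cos(m_n x)$, $\vartheta_n=d_n\sin(m_n x)$, $\xi_n=e_n\sin(m_n x)$, together with $\Phi_n=\i\lambda_n\varphi_n$, $\Psi_n=\i\lambda_n\psi_n$, $W_n=\i\lambda_n w_n$ and
$$\eta_n(x,s)=\frac{1-\e^{-\i\lambda_n s}}{\i\lambda_n}\vartheta_n(x),\qquad \zeta_n(x,s)=\frac{1-\e^{-\i\lambda_n s}}{\i\lambda_n}\xi_n(x).$$
This respects \eqref{BC1}-\eqref{BC2} and places $u_n$ in $\D(\mathsf{A})$ (the factor $1-\e^{-\i\lambda_n s}$ vanishes at $s=0$, so $\eta_n,\zeta_n\in\D(T)$). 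With this ansatz, equations \eqref{R1}, \eqref{R3}, \eqref{R5}, \eqref{R8}, \eqref{R10} are satisfied identically, while the memory integrals collapse to
$$\varpi\int_0^\infty \mu(s)\eta_{n,xx}(s)\,\d s = \frac{\varpi\bigl(g(0)-\hat\mu(\lambda_n)\bigr)}{\i\lambda_n}\vartheta_{n,xx},$$
and analogously for the $\nu$-integral. Substituting into the remaining equations \eqref{R2}, \eqref{R4}, \eqref{R6}, \eqref{R7}, \eqref{R9} leaves a $5\times 5$ linear algebraic system
$M_n(\lambda_n)(a_n,b_n,c_n,d_n,e_n)^\top = R_n$ whose matrix depends polynomially on $m_n,\lambda_n$ and on the two scalar parameters $\hat\mu(\lambda_n)$, $\hat\nu(\lambda_n)$.

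By Lemma~\ref{QUANRL}, $\hat\mu(\lambda)=-\i\mu(0)/\lambda+\mathrm{o}(1/\lambda)$ and $\hat\nu(\lambda)=-\i\nu(0)/\lambda+\mathrm{o}(1/\lambda)$ as $\lambda\to\infty$. Scaling $\lambda_n$ proportionally to $m_n$, the leading part of $\det M_n(\lambda_n)$ factors as the product of a ``thermal'' polynomial and two ``mechanical'' polynomials corresponding to the three propagation regimes of the underlying isothermal Bresse system. The plan is to choose $\lambda_n$ as a real approximant of a root of this leading polynomial and to take $(a_n,\ldots,e_n)$ as the associated approximate null vector normalized so that $\|u_n\|_\H=1$; the residual $R_n$ then has norm comparable to $\det M_n(\lambda_n)$. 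The order-$\lambda_n^{-2}$ correction to this determinant comes from the $1/\lambda$ corrections supplied by Lemma~\ref{QUANRL} and is proportional to $\chi_g$ along the $\psi$-branch and to $\chi_h$ along the $w$-branch, in parallel with the structural role of these numbers in the identities \eqref{perno} and \eqref{perno2} used for the upper bounds. Under the hypothesis $\chi_g\chi_h\neq 0$ at least one of these two branches delivers the required sequence.

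The main obstacle is the explicit extraction of $\chi_g$ and $\chi_h$ as the precise coefficients of the $\lambda_n^{-2}$ corrections inside the $5\times 5$ determinant. I expect to handle this by first eliminating the thermal coefficients $d_n,e_n$ via the expansions of $\hat\mu(\lambda_n),\hat\nu(\lambda_n)$, thereby reducing to an effective $3\times 3$ system for $(a_n,b_n,c_n)$ with memory-perturbed mechanical coefficients; the stability numbers then emerge as the structural wave-speed mismatches of the reduced system, in direct analogy with the way they appear in Lemmas~\ref{phiA}(ii) and~\ref{phiB}. Verification that $u_n\in\D(\mathsf{A})$ is immediate from the smoothness of the trigonometric $x$-dependence and the explicit formulas for $\eta_n,\zeta_n$.
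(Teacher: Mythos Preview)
Your overall framework---single Fourier mode, explicit history profiles $\eta_n,\zeta_n$ solving \eqref{R8} and \eqref{R10} exactly, reduction to a $5\times5$ algebraic system, and Lemma~\ref{QUANRL} to expand $\hat\mu,\hat\nu$---is exactly the machinery the paper uses. The gap lies in the asymptotic analysis you sketch.

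The weakly damped branch is the $\varphi$-branch (wave speed $\sqrt{k/\rho_1}$), not the $\psi$- or $w$-branches. The variables $\psi$ and $w$ are directly coupled to the temperatures and hence effectively damped; selecting $\lambda_n$ along those speeds will not produce a residual of order $\lambda_n^{-2}$. The paper takes $\lambda_n^2 = (k m_n^2 + l^2 k_0 - c_0)/\rho_1$ with a free constant $c_0$, so that the diagonal entry $p_1(n)=-\rho_1\lambda_n^2+km_n^2+l^2k_0$ equals $c_0$.

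More importantly, $\chi_g$ and $\chi_h$ do not appear separately on two different branches. The leading $m_n^8$ coefficient $\alpha$ of $\det M_n$ is affine in $c_0$ with slope a nonzero multiple of $\chi_g\chi_h$; the hypothesis $\chi_g\chi_h\neq0$ is used precisely to solve $\alpha=0$ for $c_0$. With this specific $c_0$ substituted back, the subleading $m_n^7$ coefficient becomes
\[
\beta_0=-\frac{\gamma^2 k^3}{\rho_1\,\chi_g\chi_h}\bigg[\frac{\mu(0)h(0)\chi_h^2}{g(0)}+\frac{4l^2\nu(0)g(0)\chi_g^2}{h(0)}\bigg],
\]
which is automatically nonzero since the bracket is a sum of strictly positive terms. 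Thus both stability numbers enter jointly---through the solvability of $\alpha=0$ and through the nonvanishing of $\beta_0$---along the single $\varphi$-branch. Without the $c_0$-shift the leading term of $\det M_n$ does not vanish and one obtains only $\|(\i\lambda_n-\mathsf{A})^{-1}\|\gtrsim\lambda_n$, not $\lambda_n^2$.

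A minor practical point: rather than normalising $u_n$ and bounding the residual, the paper fixes the right-hand side $\widehat u_n=(0,\rho_1^{-1}\sin(m_n x),0,\ldots,0)$ of constant norm, solves the $5\times5$ system exactly by Cramer's rule, and reads off $|a_n|=|\det\mathbb{A}_n/\det\mathbb{M}_n|\sim m_n^8/m_n^7\sim\lambda_n$, whence $\|u_n\|_\H\geq\mathfrak{c}\|\varphi_{n,x}\|\sim|a_n|m_n\sim\lambda_n^2$. This bypasses the need to control the size of an approximate null vector.
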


\begin{proof}
Calling $\omega_n = \frac{n \pi}{\ell}$ for every $n \in \mathbb{N}$,
we consider the sequence
$$
\widehat u_n(x) = \Big(0,\frac{\sin{\omega_n x}}{\rho_1},0,0,0,0,0,0,0,0\Big)\in\H.
$$
Note that
\begin{equation}
\label{normhatu}
\|\widehat u_n\|_\H=\sqrt{\frac{\ell}{2\rho_1}},\quad \forall n.
\end{equation}
Then, we study the resolvent equation
$$
\i\lambda_n u_n -\mathsf{A} u_n=\widehat u_n
$$
for some sequence of real numbers $\lambda_n\to\infty$ to be suitably chosen later.
Due to Theorem~\ref{incl}, there exists a unique solution
$$
u_n = (\varphi_n,\Phi_n,\psi_n,\Psi_n,w_n,W_n,\vartheta_n,\eta_n,\xi_n,\zeta_n)\in\D(\mathsf{A}).
$$
Using the ansatz
\begin{align*}
\varphi_n(x) &= A_n \sin{\omega_n x},\\
\psi_n(x) &= B_n \cos{\omega_n x},\\
w_n(x) &= C_n \cos{\omega_n x},\\
\vartheta_n(x) &= D_n \sin{\omega_n x},\\
\eta_n(x,s) &= d_n(s) \sin{\omega_n x},\\
\xi_n(x) &= E_n \sin{\omega_n x},\\
\zeta_n(x,s) &= e_n(s) \sin{\omega_n x},
\end{align*}
for some complex numbers $A_n,B_n,C_n,D_n,E_n$, and some complex-valued functions
$$d_n,e_n\in H^1_\mu(\R^+)\qquad \text{with}\qquad d_n(0)=e_n(0)=0,$$
after an elementary computation we get the system
\begin{equation}
\label{sys1}
\begin{cases}
p_1(n) A_n + k\omega_n B_n + l\omega_n(k+k_0)C_n + l\gamma E_n= 1,\\
\noalign{\vskip2mm}
k\omega_n A_n + p_2(n) B_n  + klC_n + \gamma \omega_n D_n =0,\\
\noalign{\vskip2mm}
l\omega_n(k +k_0)A_n + k l B_n + p_3(n)C_n + \gamma \omega_n E_n = 0,\\\noalign{\vskip1mm}
\displaystyle
\i\lambda_n\rho_3 D_n +  \varpi\, \omega_n^2 \int_0^\infty \mu(s) d_n(s) \d s - \i \lambda_n \omega_n \gamma B_n =0,\\
\noalign{\vskip0.5mm}
\i\lambda_n d_n(s) + d_n'(s) -D_n =0,\\
\noalign{\vskip0.5mm}\displaystyle
\i\lambda_n\rho_3 E_n +  \varpi\, \omega_n^2 \int_0^\infty \nu(s) e_n(s) \d s
- \i \lambda_n \gamma (\omega_n C_n + l A_n) =0,\\
\i\lambda_n e_n(s) + e_n'(s) -E_n =0,
\end{cases}
\end{equation}
having set
\begin{align*}
p_1(n) &= -\rho_1\lambda_n^2 + k\omega_n^2 + l^2 k_0,\\
p_2(n) &= -\rho_2\lambda_n^2 + b\omega_n^2 + k,\\
p_3(n) &= -\rho_1\lambda_n^2 + k_0\omega_n^2 + l^2k.
\end{align*}
Integrating the 5\textsuperscript{th} and the 7\textsuperscript{th}
equation, we find (recall that $d_n(0)=e_n(0)=0$)
$$
d_n(s) = \frac{D_n}{\i\lambda_n} (1- \e^{-\i\lambda_n s})
\,\and\,
e_n(s) = \frac{E_n}{\i\lambda_n} (1- \e^{-\i\lambda_n s}).
$$
Substituting these identities into \eqref{sys1}, we arrive at
\begin{equation}
\label{sys2}
\begin{cases}
p_1(n) A_n + k\omega_n B_n + l\omega_n(k+k_0)C_n + l\gamma E_n= 1,\\
\noalign{\vskip2mm}
k\omega_n A_n + p_2(n) B_n  + klC_n + \gamma \omega_n D_n =0,\\
\noalign{\vskip2mm}
l\omega_n(k +k_0)A_n + k l B_n + p_3(n)C_n + \gamma \omega_n E_n = 0,\\\noalign{\vskip2mm}
\lambda_n^2 \omega_n \gamma B_n + [p_4(n)- \varpi \omega_n^2\, \hat \mu (\lambda_n)] D_n =0,\\
\noalign{\vskip2mm}
\lambda_n^2 \gamma l A_n + \lambda_n^2 \omega_n \gamma C_n
+[p_5(n)- \varpi \omega_n^2\, \hat \nu (\lambda_n)] E_n=0,
\end{cases}
\end{equation}
having set
\begin{align*}
p_4(n) &= - \rho_3 \lambda_n^2 + \varpi g(0)\omega_n^2,\\
p_5(n) &= - \rho_3 \lambda_n^2 + \varpi h(0)\omega_n^2,
\end{align*}
and (cf. Lemma \ref{QUANRL})
\begin{align*}
\hat \mu(\lambda_n) = \int_0^\infty \mu(s)\e^{-\i \lambda_n s} \d s \,\and\,
\hat \nu(\lambda_n) = \int_0^\infty \nu(s)\e^{-\i \lambda_n s} \d s.
\end{align*}
For future use, we introduce the coefficient matrix $\mathbb{M}_n$ associated to system \eqref{sys2}, that is
$$
\mathbb{M}_n = \begin{bmatrix}
p_1(n) \, &\, k \omega_n \, &\, l\omega_n(k+k_0) & 0 & l\gamma \\\noalign{\vskip2.7mm}
k \omega_n \, &\, p_2(n) \, &\, kl & \gamma \omega_n & 0\\\noalign{\vskip2.7mm}
l\omega_n(k +k_0) \, &\, kl \, &\, p_3(n) & 0 & \gamma \omega_n\\\noalign{\vskip2.7mm}
0 \, &\, \lambda_n^2 \omega_n \gamma \, &\, 0 & [p_4(n)- \varpi \omega_n^2\, \hat \mu (\lambda_n)] & 0\\\noalign{\vskip2.7mm}
\lambda_n^2 \gamma l \, &\, 0 \, &\, \lambda_n^2 \omega_n \gamma & 0 & [p_5(n)- \varpi \omega_n^2\, \hat \nu (\lambda_n)]
\end{bmatrix}.
$$
We will also need the matrix $\mathbb{A}_n$ defined as
$$
\mathbb{A}_n = \begin{bmatrix}
1 \, &\, k \omega_n \, &\, l\omega_n(k+k_0) & 0 & l\gamma \\\noalign{\vskip2.7mm}
0 \, &\, p_2(n) \, &\, kl & \gamma \omega_n & 0\\\noalign{\vskip2.7mm}
0 \, &\, kl \, &\, p_3(n) & 0 & \gamma \omega_n\\\noalign{\vskip2.7mm}
0 \, &\, \lambda_n^2 \omega_n \gamma \, &\, 0 & [p_4(n)- \varpi \omega_n^2\, \hat \mu (\lambda_n)] & 0\\\noalign{\vskip2.7mm}
0 \, &\, 0 \, &\, \lambda_n^2 \omega_n \gamma & 0 & [p_5(n)- \varpi \omega_n^2\, \hat \nu (\lambda_n)]
\end{bmatrix}.
$$

\smallskip
\noindent
Once these preliminary maneuvers are done, for $c_0\in\R$ to be suitably fixed later we take
\begin{equation}
\label{deflambda}
\lambda_n = \sqrt{\frac{k\omega_n^2 + l^2 k_0 - c_0}{\rho_1}}= \sqrt{\frac{k}{\rho_1}}\omega_n
+ {\rm o}(\omega_n).
\end{equation}
With this choice, it is immediate to check that
\begin{align}
\label{P1}
p_1(n) &= c_0,\\\noalign{\vskip0.7mm}
\label{P2}
p_2(n) &= \Big(b-\frac{\rho_2 k}{\rho_1}\Big)\omega_n^2+ {\rm O}(1),\\\noalign{\vskip1.5mm}
\label{P3}
p_3(n) &= (k_0-k)\omega_n^2 + {\rm O}(1),\\\noalign{\vskip1mm}
\label{P4}
p_4(n) &= \Big(\varpi g(0) - \frac{\rho_3 k}{\rho_1}\Big)\omega_n^2+ {\rm O}(1),\\
\label{P5}
p_5(n) &= \Big(\varpi h(0) - \frac{\rho_3 k}{\rho_1}\Big)\omega_n^2+ {\rm O}(1).
\end{align}
Moreover, Lemma \ref{QUANRL}
(which is applicable due to the assumptions on $\mu$ and $\nu$) tells that
\begin{align}
\label{muhat}
\hat \mu (\lambda_n) &= -\frac{\i \mu(0)}{\lambda_n}+ {\rm o}\Big(\frac{1}{\lambda_n}\Big),\\\noalign{\vskip1.6mm}
\label{nuhat}
\hat \nu (\lambda_n) &= -\frac{\i \nu(0)}{\lambda_n}+ {\rm o}\Big(\frac{1}{\lambda_n}\Big).
\end{align}
Hence, denoting by (cf.\ the proofs of Lemmas \ref{phiA}-\ref{phiB})
$$
\sigma_g=\varpi g(0) - \frac{\rho_3 k}{\rho_1} \,\and\, \sigma_h=\varpi h(0) - \frac{\rho_3 k}{\rho_1},
$$
and exploiting \eqref{deflambda}-\eqref{nuhat}, after
a long but elementary computation we find the identity
\begin{equation}
\label{detMN}
{\rm Det}\, \mathbb{M}_n = \Big(\frac{\varpi k}{\rho_1}\Big) \alpha \omega_n^8
- \i\varpi \sqrt{\frac{\rho_1}{k}} \beta \omega_n^7
+ {\rm o}(\omega_n^7),
\end{equation}
having set
$$
\alpha = c_0 \,\chi_g \, \chi_h \Big(\frac{\varpi k}{\rho_1}\Big) g(0)h(0)
 + \chi_h  \hspace{0.3mm} \sigma_g \hspace{0.3mm} k^2 h(0)
+ \frac{\chi_g}{\rho_1}\big[\sigma_h \rho_1(k+k_0)^2-
\gamma^2 k(3k + k_0)\big]l^2 g(0)
$$
and
\begin{align*}
\beta &= c_0 \Big(\frac{\varpi k}{\rho_1}\Big) \big[ h(0)\mu(0)
\Big(b-\frac{\rho_2 k}{\rho_1}\Big)\chi_h +  g(0)\nu(0) (k_0-k)\chi_g \big]
-\Big(\frac{\varpi k^3}{\rho_1}\Big) h(0)\mu(0)\chi_h\\\noalign{\vskip0.5mm}
&\quad  -\Big(\frac{\varpi k}{\rho_1}\Big)  g(0)\nu(0) l^2 (k+k_0)^2\chi_g+ \sigma_g (k_0-k)k^2 \nu(0) \\\noalign{\vskip1mm}
&\quad + \Big(b-\frac{\rho_2 k}{\rho_1}\Big) \frac{[\sigma_h \rho_1(k+k_0)^2-
\gamma^2 k(3 k + k_0)]l^2 \mu(0)}{\rho_1}.
\end{align*}
Similarly, we also obtain the equality
\begin{equation}
\label{detAN}
{\rm Det}\, \mathbb{A}_n =  \chi_g \hspace{0.3mm} \chi_h \Big(\frac{\varpi k}{\rho_1}\Big)^2 g(0)h(0)\hspace{0.1mm}
\, \omega_n^8 +  {\rm o}(\omega_n^8).
\end{equation}
At this point, we choose $c_0$ such that $\alpha=0$, namely
$$
c_0 = - \frac{1}{\chi_g \, \chi_h }\Big(\frac{\rho_1}{\varpi k}\Big)\frac{1}{g(0)h(0)}\Big[
\chi_h  \hspace{0.3mm} \sigma_g \hspace{0.3mm} k^2 h(0)
+ \frac{\chi_g}{\rho_1}\big[\sigma_h \rho_1(k+k_0)^2-
\gamma^2 k(3k + k_0)\big]l^2 g(0)\Big]
$$
(recall that $\chi_g \hspace{0.3mm} \chi_h \neq0$ by assumption).
Substituting the expression of $c_0$ into $\beta$, after a few elementary calculations we infer that
$$
\beta = -\frac{\gamma^2  k^3}{\rho_1 \chi_g \hspace{0.3mm} \chi_h}
\bigg[\frac{\mu(0) h(0)\chi_h^2}{g(0)}+\frac{4 l^2\nu(0) g(0)\chi_g^2}{h(0)} \bigg] \doteq \beta_0 \neq 0.
$$
Therefore, \eqref{detMN} turns into
\begin{equation}
\label{detMNnew}
{\rm Det}\, \mathbb{M}_n = - \i\varpi \sqrt{\frac{\rho_1}{k}} \beta_0 \hspace{0.3mm} \omega_n^7 + {\rm o}(\omega_n^7).
\end{equation}
In particular, ${\rm Det}\, \mathbb{M}_n\neq0$
for every $n$ sufficiently large, meaning that
system~\eqref{sys2} has a unique solution $(A_n,B_n,C_n,D_n,E_n)$.
In particular, using Cramer's rule and
invoking \eqref{detAN}-\eqref{detMNnew} together with \eqref{deflambda}, we learn that
$$
|A_n| = \bigg|\frac{{\rm Det}\, \mathbb{A}_n}{{\rm Det}\, \mathbb{M}_n}\bigg|
\sim \mathfrak{c}_* \hspace{0.15mm} \lambda_n \qquad \text{where}\qquad
\mathfrak{c}_* = \Big(\frac{k}{\rho_1}\Big)^2 \varpi g(0) h(0)
\Big|\frac{\chi_g \hspace{0.3mm} \chi_h }{\beta_0} \Big|>0.
$$
Hence, appealing to \eqref{equivnorm}, we derive the controls
\begin{align*}
\|u_n\|_{\H} \geq \mathfrak{c}\|\varphi_{nx}\|
= \mathfrak{c}\sqrt{\frac{\ell}{2}}|A_n|\, \omega_n
\sim \mathfrak{c}\mathfrak{c}_* \sqrt{\frac{\ell\rho_1}{2 k}}\lambda_n^2.
\end{align*}
Due to \eqref{normhatu}, we end up with
$$
\limsup_{n\to\infty} \lambda_n^{-2} \|(\i\lambda_n - \mathsf{A})^{-1}\|
\geq \frac{\mathfrak{c}\mathfrak{c}_*\rho_1}{2\sqrt{k}}> 0.
$$
The proof is finished.
\end{proof}


\section{Proof of Theorem \ref{POL-STAB-TEO} - Optimality}
\label{proofopt}

\noindent
The proof makes use of the Batty-Duyckaerts theorem \cite{BattyDuy}
(see also \cite[Theorem 4.4.14]{BattyBook}).

\begin{theorem}[Batty-Duyckaerts]
Let $\Sigma(t)=\e^{t\mathsf{L}}$ be a bounded semigroup acting on a Banach space
with $0\in\varrho(\mathsf{L})$. Assume that $\|\Sigma(t)\mathsf{L}^{-1}\|\to 0$ as $t\to\infty$ and
let $d:[0,\infty)\to \R^+$ be a decreasing continuous function
vanishing at infinity such that $\|\Sigma(t)\mathsf{L}^{-1}\| \leq d(t)$ for all $t\geq0$.
Then $\i\R\subset\varrho(\mathsf{L})$ and there exists $C>0$
such that
$$
\|(\i\lambda-\mathsf{L})^{-1}\| \leq C d^{-1}\Big(\frac1{2|\lambda|}\Big)
$$
for all $|\lambda|$ sufficiently large.
\end{theorem}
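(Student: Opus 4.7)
The strategy is to exploit the Duhamel-type identity
$$(\i\lambda - \mathsf{L})V_T = I - \e^{-\i\lambda T}\Sigma(T), \qquad V_T \doteq \int_0^T \e^{-\i\lambda t}\Sigma(t)\,\d t,$$
valid for every $\lambda\in\R$ and $T>0$, which one obtains by differentiating $t\mapsto \e^{-\i\lambda t}\Sigma(t)x$ and integrating on $[0,T]$ (noting that $V_T x\in\D(\mathsf{L})$ for every $x$ in the underlying Banach space). All constants below depend only on $M\doteq \sup_{t\geq0}\|\Sigma(t)\|<\infty$, on $\|\mathsf{L}^{-1}\|$, and on the function $d$.

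First I would establish the inclusion $\i\R\subset\varrho(\mathsf{L})$ by contradiction. Fix $\lambda\neq 0$ with $\i\lambda\in\sigma(\mathsf{L})$. Since $\mathsf{L}$ generates a bounded semigroup and $0\in\varrho(\mathsf{L})$, the point $\i\lambda$ lies on the boundary of $\sigma(\mathsf{L})$ and is therefore an approximate eigenvalue: there exists a sequence $(x_n)\subset\D(\mathsf{L})$ with $\|x_n\|=1$ and $\|(\i\lambda-\mathsf{L})x_n\|\to 0$. Plugging $x_n$ into the identity yields
$$\e^{-\i\lambda T}\Sigma(T)x_n = x_n - V_T(\i\lambda-\mathsf{L})x_n,$$
so $\|\Sigma(T)x_n\|\to 1$ as $n\to\infty$ for each fixed $T>0$. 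On the other hand, writing $x_n=\mathsf{L}^{-1}\mathsf{L}x_n$ with $\|\mathsf{L}x_n\|\leq |\lambda|+\|(\i\lambda-\mathsf{L})x_n\|$ gives
$$\|\Sigma(T)x_n\|=\|\Sigma(T)\mathsf{L}^{-1}\mathsf{L}x_n\|\leq d(T)\big(|\lambda|+\|(\i\lambda-\mathsf{L})x_n\|\big).$$
Letting $n\to\infty$ and then $T\to\infty$ produces $1\leq 0$, a contradiction.

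For the resolvent bound, I would pick any $x$ in the space and set $w\doteq (\i\lambda-\mathsf{L})^{-1}x\in\D(\mathsf{L})$. Applying the Duhamel identity with $y=(\i\lambda-\mathsf{L})w=x$ one finds
$$w = V_T x + \e^{-\i\lambda T}\Sigma(T)w.$$
Since $\mathsf{L}w=\i\lambda w -x$, the bound $\|\Sigma(T)w\|=\|\Sigma(T)\mathsf{L}^{-1}\mathsf{L}w\|\leq d(T)\big(|\lambda|\|w\|+\|x\|\big)$, together with the trivial estimate $\|V_T\|\leq MT$, yields
$$\big(1-d(T)|\lambda|\big)\|w\|\leq\big(MT + d(T)\big)\|x\|.$$
Choosing $T=d^{-1}\!\big(\tfrac{1}{2|\lambda|}\big)$ (well defined for $|\lambda|$ large, as $d$ is continuous and decreases to zero) makes $d(T)|\lambda|=\tfrac12$ and delivers $\|w\|\leq C\,d^{-1}\!\big(\tfrac{1}{2|\lambda|}\big)\|x\|$ for some structural constant $C>0$, which is precisely the desired bound.

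The main obstacle is the first step: the decay of $\Sigma(t)\mathsf{L}^{-1}$ does not translate directly into a norm bound on $\Sigma(t)$, so the inclusion $\i\R\subset\varrho(\mathsf{L})$ is not immediate from the hypotheses. The key idea is to trade the approximate eigenvector relation $(\i\lambda-\mathsf{L})x_n\to 0$ for the uniform bound $\|\mathsf{L}x_n\|\leq|\lambda|+o(1)$, which injects the spectral parameter $\lambda$ into the estimate and thereby activates the assumption $\|\Sigma(T)\mathsf{L}^{-1}\|\leq d(T)$. Once this is secured, the resolvent estimate is essentially bookkeeping, with the sharp choice $T=d^{-1}(\tfrac{1}{2|\lambda|})$ producing the bound in the most natural way.
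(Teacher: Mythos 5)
Your proposal is correct. Note that the paper does not prove this statement at all: it is quoted as an external result and attributed to Batty--Duyckaerts \cite{BattyDuy} (see also \cite[Theorem 4.4.14]{BattyBook}), so there is no internal proof to compare against. Your argument is a sound, self-contained proof of the quoted direction, and it follows the standard route: the truncated Duhamel identity $(\i\lambda-\mathsf{L})V_T = I - \e^{-\i\lambda T}\Sigma(T)$, the fact that a spectral point on $\i\R$ of the generator of a bounded semigroup is an approximate eigenvalue (the same fact the paper invokes via \cite[Proposition B.2]{BattyBook} in Theorem \ref{incl}), the key observation $\|\Sigma(T)x\|\le d(T)\|\mathsf{L}x\|$ with $\|\mathsf{L}x_n\|\le|\lambda|+{\rm o}(1)$, and the calibrated choice $d(T)=\tfrac1{2|\lambda|}$ giving $\|w\|\le 2\big(MT+d(T)\big)\|x\|\le C\,d^{-1}\big(\tfrac1{2|\lambda|}\big)\|x\|$ for $|\lambda|$ large (using that $d^{-1}(\tfrac1{2|\lambda|})\to\infty$ to absorb the bounded term $d(T)$). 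The only cosmetic point is that for a merely nonincreasing $d$ one should interpret $d^{-1}$ as a generalized inverse and pick $T$ by the intermediate value theorem, but this does not affect the argument.
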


Let $\chi_g \, \chi_h \neq0$ and assume by contradiction that \eqref{condpolyopt} is not satisfied, namely
$$\|S(t)\mathsf{A}^{-1}\| = {\rm o} (t^{-\frac{1}{2}})\quad\,\, \text{as } t \to \infty.$$
Take any decreasing continuous function $d:[0,\infty)\to \R^+$ such that
$\|S(t)\mathsf{A}^{-1}\|\leq d(t)$ and $d(t)={\rm o} (t^{-\frac{1}{2}})$.
Exploiting the Batty-Duyckaerts theorem, we learn that
$$
\|(\i\lambda-\mathsf{A})^{-1}\| \leq C d^{-1}\Big(\frac1{2|\lambda|}\Big)
$$
for some $C>0$ and every $|\lambda|$ large enough. Since
$$
d^{-1}\Big(\frac1{2|\lambda|}\Big) = {\rm o} (|\lambda|^{2})\quad\,\, \text{as } |\lambda| \to \infty,
$$
we conclude that
$$\|(\i\lambda-\mathsf{A})^{-1}\|= {\rm o} (|\lambda|^{2})\quad\,\, \text{as } |\lambda| \to \infty,
$$
in contradiction with Lemma~\ref{stimabassoGP}. \qed

\section{Proof of Theorem \ref{POL-STAB-TEO-CATTA}}
\label{profcatta}

\noindent
We shall prove the two assertions contained in Theorem \ref{POL-STAB-TEO-CATTA} (i.e.\ the
decay rate $\sqrt{t}$ and the optimality)
separately. Recall that the inclusion $\i\R \subset \varrho(\mathsf{B})$ has been proved in \cite{SARE}.

\subsection{Decay rate}\label{autopoly}
We begin by considering the semigroup $S(t)=\e^{t\mathsf{A}}:\H\to\H$ generated by equation \eqref{refabs} with the particular choice
\begin{equation}
\label{choice}
\mu(s) = \frac{1}{(\varpi\varsigma)^2}\e^{-\frac{s}{\varpi\varsigma}}
\,\and\, \nu(s)= \frac{1}{(\varpi \tau)^2}\e^{-\frac{s}{\varpi\tau}}
\end{equation}
(note that $\mu$ and $\nu$ fulfill the structural conditions stated in Subsection \ref{assmemker} with $g=g_\varsigma$
and $h=h_\tau$, where $g_\varsigma$ and $h_\tau$ are defined in \eqref{expchoice}).
Then, in the same spirit of \cite[Section 8]{TIM}, we introduce the map $\Lambda:\M\to L^2_*$ such that
$$
\Lambda \eta= - \frac{1}{\varpi \varsigma^2} \int_0^\infty \e^{-\frac{s}{\varpi\varsigma}} \eta_x(s) \d s,
$$
and the map (denoted with the same symbol)
$\Lambda:\N \to L^2_*$ such that
$$
\Lambda \zeta = - \frac{1}{\varpi \tau^2} \int_0^\infty \e^{-\frac{s}{\varpi\tau}}  \zeta_x(s) \d s.
$$
It is readily seen that
\begin{equation}
\label{boundlambda}
\varsigma\|\Lambda \eta\|^2 \leq \varpi \|\eta\|_\M^2 \,\and\,
\tau\|\Lambda \zeta\|^2 \leq \varpi \|\zeta\|_\N^2.
\end{equation}
For every given $u = (\varphi,\Phi,\psi,\Psi,w,W,\vartheta,\eta,\xi,\zeta)\in\H$, we denote by
$$
u^\Lambda = (\varphi,\Phi,\psi,\Psi,w,W,\vartheta,\Lambda\eta,\xi,\Lambda\zeta) \in \V.
$$
Let now $v_0 =(\varphi_0,\Phi_0,\psi_0,\Psi_0,w_0,W_0,\vartheta_0,p_0,\xi_0,q_0)\in \D(\mathsf{B})$ be an
arbitrarily fixed initial datum. Calling for $s>0$
$$
\eta_0(x,s)= -\frac{s}{\varpi} \int_0^x p_0(y) \d y
\and \zeta_0(x,s)= -\frac{s}{\varpi} \int_0^x q_0(y) \d y,
$$
we claim that
\begin{equation}
\label{lambdazero}
\begin{cases}
u_0 \doteq(\varphi_0,\Phi_0,\psi_0,\Psi_0,w_0,W_0,\vartheta_0,\eta_0,\xi_0,\zeta_0)\in\D(\mathsf{A}),\\
u_0^\Lambda = v_0.
\end{cases}
\end{equation}
To this end, we need to prove that
$\eta_0,\zeta_0\in \D(T)$, $\int_0^\infty \mu(s)\eta_0(s)\d s,\int_0^\infty \nu(s)\zeta_0(s)\d s \in H^2$ and
$$\Lambda \eta_0 = p_0,\,\quad\, \Lambda \zeta_0 = q_0.$$
We show the statements for $\eta_0$ (the ones for $\zeta_0$ are analogous).
Note first that $\eta_{0}\in H_0^1$ for every fixed $s$, since $p_0\in L^2_*$. Next,
recalling \eqref{choice}, we have
\begin{align*}
&\int_0^\infty \mu(s) \|\eta_{0x}(s)\|^2 \d s=
\frac{2 \varsigma}{\varpi} \|p_0\|^2 <\infty,\\\noalign{\vskip0.7mm}
&\int_0^\infty \mu(s) \|\eta'_{0x}(s)\|^2 \d s =
\frac{1}{\varpi^3 \varsigma} \|p_0\|^2 <\infty,
\end{align*}
meaning that both $\eta_0,\eta_{0}'$ belong to $\M$.
It is also apparent to see that $\|\eta_{0x}(s)\|\to 0$ as $s\to0$, yielding $\eta_0\in\D(T)$.
The condition $\int_0^\infty \mu(s)\eta_0(s)\d s \in H^2$ follows easily from the fact that $p_0\in H^1$
and $s\mu(s)\in L^1(\R^+)$.
Finally, we note that
$$
\Lambda \eta_0 = \frac{p_0}{(\varpi \varsigma)^2} \int_0^\infty s \e^{-\frac{s}{\varpi\varsigma}}\, \d s = p_0,
$$
and \eqref{lambdazero} is proved.

At this point, using
\eqref{choice} and \eqref{lambdazero},
one can readily check that $[S(t)u_0]^\Lambda$ is the unique (classical) solution to \eqref{refabsCATTA}
with initial datum $v_0$, namely
$$
T(t) v_0 = [S(t)u_0]^\Lambda \quad\,\, \forall t\geq0.
$$
As a consequence, invoking \eqref{boundlambda} and Theorem \ref{POL-STAB-TEO}, we obtain
$$
\|T(t)v_0\|_\V = \|[S(t)u_0]^\Lambda\|_\V \leq
\|S(t)u_0\|_\H \leq \frac{K}{\sqrt{t}} \|\mathsf{A} u_0\|_\H
$$
for some constant $K>0$ and every $t>0$. Exploiting again \eqref{choice}  and \eqref{lambdazero},
it is also straightforward to check that
$$
\|\mathsf{A} u_0\|_\H = \|\mathsf{B} v_0\|_\V.
$$
Therefore, we end up with
$$
\|T(t)v_0\|_\V \leq \frac{K}{\sqrt{t}} \|\mathsf{B} v_0\|_\V,
$$
for every $v_0 \in \D(\mathsf{B})$ and every $t>0$, and \eqref{polycattaott} is reached.\qed

\subsection{Optimality}\label{opticattaneo}
Our goal is to show that
\begin{equation}
\label{resol}
\limsup_{\lambda\to\infty} \lambda^{-2} \|(\i\lambda - \mathsf{B})^{-1}\| > 0,
\end{equation}
provided that $\chi_\varsigma \hspace{0.3mm} \chi_\tau\neq0$. Once this relation has been proved,
\eqref{polycattaottimal} follows exploiting the Batty-Duyckaerts theorem as in Section~\ref{proofopt}.

Analogously to the proof of Lemma \ref{stimabassoGP},
we consider the sequence
$$
\widehat v_n(x) = \Big(0,\frac{\sin{\omega_n x}}{\rho_1},0,0,0,0,0,0,0,0\Big)\in\V,
$$
where $\omega_n = \frac{n \pi}{\ell}$.
We have $\|\widehat v_n\|_\V=\sqrt{\frac{\ell}{2\rho_1}}$ for all $n\in\mathbb{N}$.
Then, we study the resolvent equation
$$
\i\lambda_n v_n -\mathsf{B} v_n=\widehat v_n
$$
for a real sequence $\lambda_n\to\infty$ to be chosen later.
The inclusion $\i\R \subset \varrho(\mathsf{B})$ tells that such an equation admits a unique solution
$$
v_n = (\varphi_n,\Phi_n,\psi_n,\Psi_n,w_n,W_n,\vartheta_n,p_n,\xi_n,q_n)\in\D(\mathsf{B}).
$$
Using the ansatz
\begin{align*}
\varphi_n(x) &= A_n \sin{\omega_n x},\\
\psi_n(x) &= B_n \cos{\omega_n x},\\
w_n(x) &= C_n \cos{\omega_n x},\\
\vartheta_n(x) &= D_n \sin{\omega_n x},\\
p_n(x) &= \tilde D_n\cos{\omega_n x},\\
\xi_n(x) &= E_n \sin{\omega_n x},\\
q_n(x) &= \tilde E_n\cos{\omega_n x},
\end{align*}
for some complex numbers $A_n,B_n,C_n,D_n,\tilde D_n,E_n,\tilde E_n$,
after an elementary calculation we get
$$
\begin{cases}
p_1(n) A_n + k\omega_n B_n + l\omega_n(k+k_0)C_n + l\gamma E_n= 1,\\\noalign{\vskip1mm}
k\omega_n A_n + p_2(n) B_n  + klC_n + \gamma \omega_n D_n =0,\\\noalign{\vskip1mm}
l\omega_n(k +k_0)A_n + k l B_n + p_3(n)C_n + \gamma \omega_n E_n = 0,\\\noalign{\vskip1mm}
\i\lambda_n\rho_3 D_n - \omega_n\tilde D_n  - \i \lambda_n \omega_n \gamma B_n =0,\\\noalign{\vskip1mm}
\i\lambda_n \varsigma \varpi \tilde D_n + \tilde D_n + \varpi\, \omega_n D_n =0,\\\noalign{\vskip1mm}
\i\lambda_n\rho_3 E_n - \omega_n\tilde E_n - \i \lambda_n \gamma (\omega_n C_n + l A_n) =0,\\\noalign{\vskip1mm}
\i\lambda_n \tau \varpi \tilde E_n + \tilde E_n + \varpi\, \omega_n E_n =0,
\end{cases}
$$
where $p_1(n),p_2(n),p_3(n)$ are defined as in the proof of Lemma \ref{stimabassoGP}.
From the 5\textsuperscript{th} and the 7\textsuperscript{th}
equation we get
$$
\tilde D_n = -\frac{\varpi\, \omega_n D_n}{\i\lambda_n \varsigma \varpi+1}
\and
\tilde E_n = -\frac{\varpi\, \omega_n E_n}{\i\lambda_n \tau \varpi+1}.
$$
Accordingly, we arrive at
$$
\begin{cases}
p_1(n) A_n + k\omega_n B_n + l\omega_n(k+k_0)C_n + l\gamma E_n= 1,\\
\noalign{\vskip4mm}
k\omega_n A_n + p_2(n) B_n  + klC_n + \gamma \omega_n D_n =0,\\
\noalign{\vskip4mm}
l\omega_n(k +k_0)A_n + k l B_n + p_3(n)C_n + \gamma \omega_n E_n = 0,\\\noalign{\vskip3mm}
\displaystyle
\lambda_n^2 \omega_n \gamma B_n + \bigg[q_1(n)
-\frac{\omega_n^2}{\varsigma(\i\lambda_n \varsigma\varpi +1)}\bigg] D_n =0,\\
\noalign{\vskip2mm}
\displaystyle
\lambda_n^2 \gamma l A_n + \lambda_n^2 \omega_n \gamma C_n
+\bigg[q_2(n)-\frac{\omega_n^2}{\tau(\i\lambda_n \tau\varpi +1)}\bigg] E_n=0,
\end{cases}
$$
where
$$
q_1(n) =  - \rho_3 \lambda_n^2 + \frac{\omega_n^2}{\varsigma}\and
q_2(n) =  - \rho_3 \lambda_n^2 + \frac{\omega_n^2}{\tau}.
$$
The system above is exactly the particular instance of~\eqref{sys2}
corresponding to the choice \eqref{choice}. Indeed, in such a case, $q_1(n)=p_4(n)$ and
$q_2(n)=p_5(n)$, while the Fourier transforms
$\hat \mu(\lambda_n)$ and $\hat \nu(\lambda_n)$ read
\begin{align*}
\hat \mu(\lambda_n) = \frac{1}{\varpi\varsigma(\i\lambda_n\varsigma\varpi+1)}\and
\hat \nu(\lambda_n) = \frac{1}{\varpi\tau(\i\lambda_n\tau\varpi+1)}.
\end{align*}
Hence, the computations in the proof of Lemma~\ref{stimabassoGP} apply.
In particular, recalling that in this situation $\chi_g=\chi_\varsigma$ and $\chi_h = \chi_\tau$, relation \eqref{resol} holds
provided that $\chi_\varsigma \hspace{0.3mm} \chi_\tau\neq0$.
\qed

\begin{remark}
As anticipated in Section \ref{coco}, making use of the maps $\Lambda$ introduced in Subsection~\ref{autopoly}
and arguing as in \cite[Section 8]{TIM}, one can prove that the semigroup $S(t)$
with the particular choice \eqref{choice} is exponentially stable if and only if the same does $T(t)$.
We refrain from doing so in this paper, leaving the details to the interested reader.
\end{remark}


\section{Proof of Theorem \ref{POL-STAB-TEO-TIM}}
\label{sezfinale}

\noindent
As in the previous section, we shall show the two assertions of Theorem \ref{POL-STAB-TEO-TIM} separately.

\subsection{Decay rate}
According to \cite{TIM}, the operator $\mathsf{C}$ satisfies the identity
\begin{equation}
\label{dissipTIM}
\Re \l \mathsf{C} z , z \r_\Z = -\frac{\varpi}{2}\Gamma[\eta] \leq 0
\end{equation}
for all $z\in\D(\mathsf{C})$,
where as before we denote by
$$\Gamma[\eta] = \int_0^\infty -\mu'(s) \|\eta_x(s)\|^2 \d s.$$
Next, for every fixed $\lambda \in \R$ and
$\widehat z\in \Z$,
we analyze the resolvent equation
$$\i\lambda z - \mathsf{C} z = \widehat z$$
where $z = (\varphi,\Phi,\psi,\Psi,\vartheta,\eta) \in \D(\mathsf{C})$.
Multiplying such an equation by $z$ in $\Z$, taking the real part
and invoking \eqref{dissipTIM}, we find
$$
\frac{\varpi}{2}\Gamma[\eta] = \Re \l \i\lambda z - \mathsf{C} z , z\r_\Z =
\Re \l \widehat z , z\r_\Z.
$$
Hence, appealing to \eqref{assnucleo1}, we get
\begin{equation}
\label{ETA-tim}
\varpi \|\eta\|_\M^2 \leq  c\|z\|_\Z \|\widehat z\|_\Z,
\end{equation}
for some  structural constant $c>0$ independent of $\lambda$.
The next lemma summarizes the needed bounds on the other variables of $z$.

\begin{lemma}
\label{LEMMATIM}
For every $\eps>0$ small enough and every $\lambda\neq0$ the inequalities
\begin{align*}
&\rho_3\|\vartheta\|^2 \leq \eps \|\Psi\|^2  + \frac{c}{\eps} \|z\|_\Z\|\widehat z\|_\Z,\\\noalign{\vskip0.7mm}
&b\|\psi_x\|^2 \leq \frac{\eps}{|\lambda|^2}\|z\|_\Z^2  + c\eps\|\Psi\|^2 +
\frac{c}{\eps}\bigg[\frac{1}{|\lambda|}+1\bigg]\|z\|_\Z\|\widehat z\|_\Z,\\\noalign{\vskip0.5mm}
&\rho_2\|\Psi\|^2 \leq \frac{c\eps}{|\lambda|^2}\|z\|_\Z^2
+\frac{c}{\eps^3}\bigg[\frac{1}{|\lambda|}+1\bigg]\|z\|_\Z \|\widehat z\|_\Z,\\\noalign{\vskip1mm}
&k\|\varphi_x + \psi\|^2 \leq \eps\|z\|_\Z^2
+\frac{c}{\eps}\big[1+|\lambda|^2\big]\big[\|\Psi\|^2
+ \|z\|_\Z \|\hat z\|_\Z\big],\\\noalign{\vskip1.7mm}
&\rho_1\|\Phi\|^2 \leq \eps \|z\|_\Z^2
+ \frac{c}{\eps}\big[\|\varphi_x + \psi\|^2 + \|z\|_\Z \|\hat z\|_\Z\big]
\end{align*}
hold for some structural constant $c>0$ independent of $\eps$ and $\lambda$.
\end{lemma}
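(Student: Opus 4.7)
The plan is to mirror, in the simpler TGP setting, the chain of resolvent estimates carried out for the BGP system in Lemmas \ref{THETA}, \ref{psi}, \ref{PSI}, \ref{phiA}(i) and \ref{PHI}. Writing the resolvent equation $\i\lambda z - \mathsf{C}z = \widehat z$ componentwise, we obtain the TGP analogues of \eqref{R1}--\eqref{R4}, \eqref{R7}, \eqref{R8}, namely
\begin{align*}
&\i\lambda \varphi - \Phi = \hat\varphi, \qquad \i\lambda\rho_1 \Phi - k(\varphi_x+\psi)_x = \rho_1 \hat\Phi,\\
&\i\lambda\psi - \Psi = \hat\psi, \qquad \i\lambda\rho_2\Psi - b\psi_{xx} + k(\varphi_x+\psi) + \gamma\vartheta_x = \rho_2 \hat\Psi,\\
&\i\lambda\rho_3 \vartheta - \varpi\!\int_0^\infty \mu(s)\eta_{xx}(s)\d s + \gamma\Psi_x = \rho_3 \hat\vartheta, \qquad \i\lambda\eta - T\eta - \vartheta = \hat\eta.
\end{align*}
These are \emph{exactly} the BGP equations \eqref{R1}--\eqref{R4}, \eqref{R7}, \eqref{R8} with $l=0$ and the $w,\xi,\zeta$ components absent. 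The memory bound \eqref{ETA-tim} is already in hand.

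The first four estimates are then carried out by literally rerunning the arguments of Lemmas \ref{THETA}, \ref{psi}, \ref{PSI} and \ref{phiA}(i) verbatim. The inequality on $\vartheta$ follows by pairing the equation for $\eta$ with $\vartheta$ in the auxiliary space $\M_0=L^2_\mu(\R^+;L^2)$, integrating by parts the $T\eta$ term, and using \eqref{ETA-tim} together with \eqref{assnucleo1}. The estimate on $\psi_x$ is obtained by the same ``multiplication of the temperature equation by $\psi_x$'' trick: one derives the auxiliary bound $\|\vartheta_x\| \leq c[1+|\lambda|]\sqrt{\|z\|_\Z\|\widehat z\|_\Z} + c\|\widehat z\|_\Z$ (pairing with $\vartheta$ in $\M$), substitutes the $\psi$-equation into the identity $\i\lambda\gamma\psi_x = \varpi\!\int_0^\infty\mu(s)\eta_{xx}(s)\d s - \i\lambda\rho_3\vartheta + (\text{data})$, and closes the argument with Young's inequality and the preceding estimate on $\vartheta$. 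The bound on $\Psi$ is obtained by multiplying the temperature equation by the primitive $P_\Psi(x) = \int_0^x \Psi(y)\d y \in H_0^1$, expressing $\i\lambda P_\Psi$ from an integration of the $\Psi$-equation on $(0,x)$, and absorbing lower-order terms via the two previous inequalities.

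The fourth estimate, on $\varphi_x+\psi$, is a strict simplification of Lemma \ref{phiA}(i): multiplying the $\Psi$-equation by $\varphi_x+\psi$ and using the relations for $\Phi_x$ and the temperature equation, one arrives at
\[
k\|\varphi_x+\psi\|^2 = \gamma\l \vartheta, (\varphi_x+\psi)_x\r + \Big(\rho_2 - \frac{b\rho_1}{k}\Big)\l\Psi, \Phi_x\r + R,
\]
with $R$ controlled by $\|z\|_\Z[\|\Psi\| + \|\psi_x\| + \|\vartheta\| + \|\widehat z\|_\Z]$. Using \eqref{R7} to rewrite $\gamma\l\vartheta,(\varphi_x+\psi)_x\r$ in terms of $\l\eta,\Phi\r_\M$ and $\l\Psi,\Phi_x\r$, together with the bound $\|\Phi_x\|\leq c|\lambda|\|z\|_\Z + c\|\widehat z\|_\Z$ and \eqref{ETA-tim}, yields the factor $[1+|\lambda|^2]$ in front of $\|\Psi\|^2 + \|z\|_\Z\|\widehat z\|_\Z$. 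Finally, the estimate on $\Phi$ follows by multiplying the $\Phi$-equation by $\varphi$ in $L^2$ and using \eqref{R1} (with $l=0$), producing the clean identity $\rho_1\|\Phi\|^2 = k\l\varphi_x+\psi,\varphi_x\r - \rho_1\l\hat\Phi,\varphi\r - \rho_1\l\Phi,\hat\varphi\r$, from which the stated bound is immediate after Young's inequality.

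No genuinely new obstacle arises: the absence of the $w$--$\xi$--$\zeta$ block removes precisely those terms whose handling required the more delicate splitting into the (i)/(ii) versions of Lemma \ref{phiA} and the auxiliary Lemmas \ref{XI}, \ref{w}, \ref{W}, \ref{phiB}. The mildly tricky point, as in the BGP case, is bookkeeping the powers of $\eps$ and $|\lambda|$ through the successive substitutions so that each intermediate inequality can be absorbed into a common structural form; this is routine Young inequality balancing rather than a conceptual difficulty.
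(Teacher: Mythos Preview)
Your proposal is correct and follows exactly the approach of the paper, which simply states that the estimates are obtained by revisiting the proofs of Lemmas~\ref{THETA}, \ref{psi}, \ref{PSI}, \ref{phiA}(i) and \ref{PHI} in the limit case $l=0$. You supply considerably more detail than the paper does, and you correctly observe that the absence of the $w$--$\xi$--$\zeta$ block makes the fourth and fifth estimates genuinely simpler (hence the cleaner $\eps$--powers and the missing $|\lambda|^{-2}$ factors compared with the BGP versions).
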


\begin{proof}
The estimates above can be comfortably achieved revisiting the proofs
of Lemma~\ref{THETA}, Lemmas~\ref{psi}-\ref{PSI}, Lemma~\ref{phiA} item (i) and Lemma \ref{PHI}
in the limit case $l=0$.
\end{proof}

Exploiting \eqref{ETA-tim} and Lemma \ref{LEMMATIM},
for every $\eps>0$ small enough and every $\lambda\neq0$ we get
\begin{equation}
\label{tim}
\|z\|^2_\Z \leq  c\eps \bigg[\frac{1}{|\lambda|^2}+1\bigg]\|z\|_\Z^2
+\frac{c}{\eps^{15}}\bigg[\frac{1}{|\lambda|^2}+|\lambda|^2\bigg]\|z\|_\Z \|\hat z\|_\Z,
\end{equation}
where $c>0$ stands for a generic structural
constant independent of $\eps$ and $\lambda$. Using \eqref{tim}
and arguing as in the proof of Theorem~\ref{incl},
one can show that $\i\R\subset \varrho(\mathsf{C})$. The details are left to the reader.
Moreover, for every $|\lambda|\geq1$, estimate \eqref{tim} yields
$$
\|z\|^2_\Z \leq c\eps \|z\|_\Z^2 +\frac{c|\lambda|^2}{\eps^{15}}\|z\|_\Z \|\hat z\|_\Z.
$$
Hence, fixing $\eps>0$ sufficiently small that
$c\eps<1/2$, we end up with
$$
\|z\|_\Z\leq K |\lambda|^2\|\hat z\|_\Z
$$
for some constant $K>0$ independent of $\lambda$.
As in Section \ref{BTSECTION}, the control above together with the
Borichev-Tomilov theorem lead to \eqref{polytimott}. \qed

\subsection{Optimality}
We basically need to revisit the proof of Lemma \ref{stimabassoGP} in the limit case $l=0$.
To this end, setting as customary $\omega_n = \frac{n \pi}{\ell}$,
we introduce the sequence
$$
\widehat z_n(x) = \Big(0,\frac{\sin{\omega_n x}}{\rho_1},0,0,0,0\Big)\in\Z,
$$
which satisfies $\|\widehat z_n\|_\Z=\sqrt{\frac{\ell}{2\rho_1}}$ for all $n$.
Assuming $\chi_g\neq0$, we consider the resolvent equation
$$\i\lambda_n z_n -\mathsf{C} z_n=\widehat z_n$$
for a real sequence $\lambda_n\to\infty$ to be chosen later.
Since $\i\R \subset \varrho(\mathsf{C})$, there
exists a unique solution
$$
z_n = (\varphi_n,\Phi_n,\psi_n,\Psi_n,\vartheta_n,\eta_n)\in\D(\mathsf{C}).
$$
Making use of the ansatz
\begin{align*}
\varphi_n(x) &= A_n \sin{\omega_n x},\\
\psi_n(x) &= B_n \cos{\omega_n x},\\
\vartheta_n(x) &= C_n \sin{\omega_n x},\\
\eta_n(x,s) &= c_n(s) \sin{\omega_n x},
\end{align*}
for some complex numbers $A_n,B_n,C_n$ and some complex-valued function
$c_n\in H^1_\mu(\R^+)$ with $c_n(0)=0$,
after an elementary calculation we get the system
$$
\begin{cases}
r_1(n) A_n + k\omega_n B_n = 1,\\
\noalign{\vskip2.5mm}
k\omega_n A_n + r_2(n) B_n + \gamma \omega_n C_n =0,\\\noalign{\vskip1mm}
\displaystyle
\i\lambda_n\rho_3 C_n +  \varpi\, \omega_n^2 \int_0^\infty \mu(s) c_n(s) \d s - \i \lambda_n \omega_n \gamma B_n =0,\\
\noalign{\vskip1mm}
\i\lambda_n c_n(s) + c_n'(s) -C_n =0,
\end{cases}
$$
having set
\begin{align*}
r_1(n) = -\rho_1\lambda_n^2 + k\omega_n^2\and
r_2(n) = -\rho_2\lambda_n^2 + b\omega_n^2 + k.
\end{align*}
Integrating the last equation of the system above
and substituting the resulting expression into the third equation,
we arrive at
\begin{equation}
\label{sys-tim}
\begin{cases}
r_1(n) A_n + k\omega_n B_n = 1,\\
\noalign{\vskip1mm}
k\omega_n A_n + r_2(n) B_n + \gamma \omega_n C_n =0,\\\noalign{\vskip1mm}
\lambda_n^2 \omega_n \gamma B_n + [r_3(n)- \varpi \omega_n^2\, \hat \mu (\lambda_n)] C_n =0,
\end{cases}
\end{equation}
where
$$
r_3(n) = - \rho_3 \lambda_n^2 + \varpi g(0)\omega_n^2
\and
\hat \mu(\lambda_n) = \int_0^\infty \mu(s)\e^{-\i \lambda_n s} \d s.
$$
At this point, we choose
$$
\lambda_n = \sqrt{\frac{k\omega_n^2 - c_0}{\rho_1}}= \sqrt{\frac{k}{\rho_1}}\omega_n
+ {\rm o}(\omega_n),
$$
for some $c_0\in\R$ to be fixed later.
In this situation, it is immediate to check that
\begin{align*}
r_1(n) &= c_0,\\\noalign{\vskip1mm}
r_2(n) &= \Big(b-\frac{\rho_2 k}{\rho_1}\Big)\omega_n^2+ {\rm O}(1),\\\noalign{\vskip0.5mm}
r_3(n) &= \Big(\varpi g(0) - \frac{\rho_3 k}{\rho_1}\Big)\omega_n^2+ {\rm O}(1),\\
\hat \mu (\lambda_n) &= -\frac{\i \mu(0)}{\lambda_n}+ {\rm o}\Big(\frac{1}{\lambda_n}\Big),
\end{align*}
where the last equality follows from Lemma \ref{QUANRL}.
Analogously to the proof of Lemma~\ref{stimabassoGP}, we introduce
the coefficient matrix $\mathbb{M}_n$ associated to \eqref{sys-tim}, that is
$$
\mathbb{M}_n = \begin{bmatrix}
r_1(n) \, &\, k \omega_n \, & 0 \\\noalign{\vskip2.7mm}
k \omega_n \, &\, r_2(n) \, & \gamma \omega_n \\\noalign{\vskip2.7mm}
0 \, &\, \lambda_n^2 \omega_n \gamma \, & [r_3(n)- \varpi \omega_n^2\, \hat \mu (\lambda_n)]
\end{bmatrix}.
$$
Exploiting the asymptotic relations above, after an elementary computation we find
\begin{equation}
\label{detMN-tim}
{\rm Det}\, \mathbb{M}_n =\alpha \omega_n^4 - \i\varpi \mu(0) \sqrt{\frac{\rho_1}{k}} \beta \hspace{0.3mm} \omega_n^3
+ {\rm o}(\omega_n^3),
\end{equation}
having set
$$
\alpha = - c_0\, \chi_g \frac{\varpi g(0) k}{\rho_1} - k^2 \Big(\varpi g(0)-\frac{\rho_3 k}{\rho_1}\Big)
\,\,\and\,\,
\beta = k^2 - c_0 \Big(b-\frac{k \rho_2}{\rho_1}\Big).
$$
Similarly, considering the matrix
$\mathbb{A}_n$ defined as
$$
\mathbb{A}_n = \begin{bmatrix}
1 \, &\, k \omega_n \, & 0 \\\noalign{\vskip2.7mm}
0 \, &\, r_2(n) \, & \gamma \omega_n \\\noalign{\vskip2.7mm}
0 \, &\, \lambda_n^2 \omega_n \gamma \, & [r_3(n)- \varpi \omega_n^2\, \hat \mu (\lambda_n)]
\end{bmatrix},
$$
we obtain the equality
\begin{equation}
\label{detAN-tim}
{\rm Det}\, \mathbb{A}_n =- \chi_g \frac{\varpi g(0) k}{\rho_1} \omega_n^4 +  {\rm o}(\omega_n^4).
\end{equation}
Choosing $c_0$ such that $\alpha=0$, and then substituting the expression of $c_0$ into $\beta$,
we obtain (recall that $\chi_g\neq0$ by assumption)
$$
\beta = \frac{\gamma^2  k^2}{\chi_g \varpi g(0)} \doteq \beta_0 \neq 0.
$$
Accordingly, \eqref{detMN-tim} takes the form
$$
{\rm Det}\, \mathbb{M}_n = -\i\varpi \mu(0) \sqrt{\frac{\rho_1}{k}}
\beta_0 \hspace{0.5mm} \omega_n^3 + {\rm o}(\omega_n^3).
$$
In particular, ${\rm Det}\, \mathbb{M}_n\neq0$
for every $n$ sufficiently large and thus system \eqref{sys-tim} has a unique solution $(A_n,B_n,C_n)$.
Using Cramer's rule and invoking \eqref{detAN-tim}, together
with the equality above and the definition of $\lambda_n$, we infer that
$$|A_n| =\bigg|\frac{{\rm Det}\, \mathbb{A}_n}{{\rm Det}\, \mathbb{M}_n}\bigg|
 \sim \mathfrak{c}_* \hspace{0.15mm} \lambda_n\qquad
\text{where}\qquad
\mathfrak{c}_* = \frac{g(0)k}{\mu(0)\rho_1} \Big|\frac{\chi_g}{\beta_0}\Big| >0.
$$
Finally, arguing as in the last part of the proof
of Lemma \ref{stimabassoGP} (in particular, using the equivalence
between $\|\cdot\|_\Z$ and the product norm, and the fact
that $\|\widehat z_n\|_\Z$ is constant), we get
$$
\limsup_{n\to\infty} \lambda_n^{-2} \|(\i\lambda_n - \mathsf{C})^{-1}\| > 0.
$$
Relation \eqref{polytimottimal} now follows exploiting the Batty-Duyckaerts theorem as in Section~\ref{proofopt}.
\qed


%

\end{document}